\documentclass[11pt]{amsart}
\usepackage{palatino}
\usepackage{amsfonts}
\usepackage{amssymb}
\usepackage{amscd}
\usepackage[breaklinks,bookmarksopen,bookmarksnumbered]{hyperref}

\usepackage{amsmath}

\usepackage{xypic}
\usepackage{amscd}
\pagestyle{myheadings}

\newfont{\sheaf}{eusm10 scaled\magstep1}

\pagestyle{plain} \setlength{\parindent}{.4 in}
\setlength{\textwidth}{5.8 in} \setlength{\topmargin} {-.3 in}
\setlength{\evensidemargin}{0 in}

\newtheorem{theorem}{Theorem}[section]
\newtheorem{proposition}[theorem]{Proposition}
\newtheorem{corollary}[theorem]{Corollary}
\newtheorem{lemma}[theorem]{Lemma}
\theoremstyle{definition}
\newtheorem{definition}[theorem]{Definition}
\newtheorem{remark}[theorem]{Remark}

\newtheorem{conjecture/question}[theorem]{Conjecture/Question}

\newtheorem{remark/definition}[theorem]{Remark/Definition}
\newtheorem{terminology/notation}[theorem]{Terminology/Notation}

\setlength{\oddsidemargin}{0 in} \setlength{\footskip}{.3 in}
\setlength{\headheight}{.3 in} \setlength{\textheight}{8.5 in}

\def\pic{\rm Pic}
\def\codim{\rm codim}

\def\Ker{\rm Ker}
\def\Supp{\rm Supp}
\def\Sing{\rm Sing}
\def\Aut{\rm Aut}

\def\Sim{Sym}

\def\rank{\rm rank}

\def\det{ \rm det}

 \title{Pl\"ucker forms and  the theta map}
 \author{Sonia Brivio }
\address{Universit\'a di Pavia, Dipartimento di Matematica, strada Ferrata 1 Pavia;  email: sonia.brivio@unipv.it}
 \author{Alessandro Verra }
\address{Universit\'a Roma Tre, Dipartimento di Matematica, L.go S. Leonardo Murialdo, 1 00146 Roma;  email: verra@mat.uniroma3.it}

\begin{document}
\maketitle
\begin{abstract} 

Let $SU_X(r,0)$ be the moduli space  of semistable vector bundles of rank r and trivial determinant over a smooth, irreducible, complex projective curve $X$.
The theta map $\theta_r: SU_X(r,0) \to \mathbb P^N$ is the rational map defined by the ample generator of ${\pic} \ SU_X(r,0)$.  The main result of the paper is that $\theta_r$ is generically
injective if $g >> r$ and $X$ is general. This partially answers the following conjecture proposed by Beauville: $\theta_r$ is  generically injective if $X$ is not hyperelliptic. The proof relies on the study of  the injectivity of the determinant map ${d_E}: {\wedge}^r H^0(E) \to H^0({\det} \ E)$, for a vector bundle $E$ on $X$, and on the reconstruction of the Grassmannian $G(r,rm)$ from a natural multilinear form associated to it, defined in the paper as the Pl\"ucker form. The method applies to other moduli spaces of vector bundles on a  projective variety $X$.

\end{abstract}

\section{ Introduction} \noindent
In this paper we introduce the elementary notion of Pl\"ucker form of a pair $(E,S)$, where $E$ is a vector bundle of rank $r$ on a smooth,
irreducible, complex projective variety $X$ and $S \subset H^0(E)$ is a subspace of dimension $rm$. Then we apply this notion to the study of the moduli space $SU_X(r,0)$ of semistable vector bundles of rank $r$ and trivial determinant on a curve $X$. Let
$$
\theta_r: SU_X(r,0) \to {\mathbb P}(H^0(\mathcal L)^*)
$$
be the so called theta map, defined by the ample generator $\mathcal L$ of ${\pic} \ SU_X(r,0)$,  \cite{key6ter} . Assume $X$ has genus $g$, we prove the following main result: \medskip \par \noindent
\underline {\bf Main Theorem} \it $\theta_r$ is generically injective if $X$ is general and $g >> r$. \rm  \medskip \par \noindent
The theorem gives a partial answer to the following conjecture, or optimistic speculation, proposed by Beauville in \cite{key2} 6.1: \medskip \par \noindent
\underline {\bf Speculation} \it  \ $\theta_r$ is generically injective if $X$ is not hyperelliptic. \rm \medskip \par \noindent
To put in perspective our result we briefly recall some open problems on $\theta_r$ and some known results, see \cite{key2}. A serious difficulty in the study of  $\theta_r$ is represented by its indeterminacy locus, which is quite unknown.  Raynaud bundles and few more constructions provide examples of points in this locus when $r > > 0$, cf. \cite{key9} and \cite{key10}. In particular,  there exists an integer $r(X) > 0$ such that $\theta_r$ is not a morphism as soon as $r > r(X)$.  As a matter of fact related to this situation, some basic questions are still unsolved. For instance:
\begin{itemize} \it
\item[$\circ$] is $\theta_r$ generically finite onto its image for any curve $X$?
\item[$\circ$] is $\theta_r$ an embedding if $r$ is very low and $X$ is general?
\item[$\circ$] compute $r(g) := min \ \lbrace r(X), \ \text { $X$ \it curve of genus $g$} \rbrace $.
\end{itemize} \par \noindent
On the side of known results only the case $r = 2$ is  well understood: $\theta_2$ is an embedding unless $X$ is hyperelliptic of genus $g \geq 3$, see \cite{key1}, \cite{key5}, \cite{key13}. Otherwise $\theta_2$ is a finite 2:1 cover of its image, \cite{key6o}.  For $r = 3$ it is conjectured that $\theta_3$ is a morphism and this is proved for $g \leq 3$, see  \cite{key2} 6.2 and \cite{key3} .  To complete the picture of  known results we have to mention the case of genus two. In this case $\theta_r$ is generically finite, see \cite{key3} and \cite{key6}. Moreover it is a morphism iff $r \leq 3$,  \cite{key8bis}.   \par \noindent \rm To prove our main theorem we  apply a more general method, working in principle for more moduli spaces of vector bundles over  a variety  $X$ of arbitrary dimension. Let us briefly describe it.  \par \noindent
 Assume $X$ is embedded in $\mathbb P^n$ and consider a pair $(E,S)$ such that: (i) $E$ is a vector bundle of rank $r$ on $X$, (ii) $S$ is a subspace of dimension $rm$ of $H^0(E)$, (iii) $ {\det} \ E \cong \mathcal O_X(1)$. Under suitable stability conditions  there exists a coarse moduli space $\mathcal S$ for $(E,S)$, see for instance  \cite{key8} for an account of this theory.  Let $p_i: X^m \to X$ be the $i$-th projection and let
$$
e_{S, E}: S \otimes \mathcal O_{X^m} \to \bigoplus_{i = 1, \dots , m} {p_i}^*E
$$
be the natural map induced by evaluating global sections. We will assume that $e_{E,S}$ is generically an isomorphism for general pairs $(E,S)$. For such a pair  the degeneracy scheme $\mathbb D_{E,S}$ of $e_{E,S}$ is a divisor in $X^m$, moreover
$$
\mathbb D_{E,S} \in   \mid \mathcal O_{X^m} (1, \dots, 1) \mid,
$$
where $\mathcal O_{X^m}(1, \dots,1) := p_1^* \mathcal O_X(1) \otimes \dots \otimes p_m^* \mathcal O_X(1)$. In this paper $\mathbb D_{E,S}$ is defined  as \it the Pl\"ucker form of $(E,S)$. \rm The construction of the Pl\"ucker form of $(E,S)$  defines a rational map
$$
\theta_{r,m}: \mathcal S \to  \mid \mathcal O_{X^m} (1, \dots, 1) \mid,
$$
sending the moduli point of $(E,S)$ to $\mathbb D_{E,S}$.  Assume $X = \mathbb G$, where $\mathbb G$ is the Pl\"ucker embedding of the Grassmannian $G(r,rm)$. Then consider the pair $({\mathcal U}^*, H)$, where ${\mathcal U}$ is the universal bundle of $\mathbb G$ and $H = H^0({\mathcal U}^*)$. In this case the Pl\"ucker form  of $({\mathcal U}^*, H)$ is the zero locus 
$$
\mathbb D_{\mathbb G} \in \mid \mathcal O_{\mathbb G^m}(1, \dots, 1) \mid
$$
of a natural multilinear form related to $\mathbb G$. More precisely $\mathbb G$ is embedded in ${\mathbb P}( \wedge^r V)$, where $V = H^*$, and $\mathbb D_{\mathbb G}$ is the zero locus of the map  
$$
d_{r,m}: (\wedge^r V )^m \to \wedge^{rm} V \cong \mathbf C,
$$
induced by the wedge product. In the first part of the paper we prove that $\mathbb G$ is uniquely reconstructed from $\mathbb D_{\mathbb G}$ as soon as $m \geq 3$.  We prove  that: \medskip \par \noindent
\underline{\bf Theorem} \it Let $m \geq 3$ and let $x \in {\mathbb P} (\wedge^r V)$, then $x \in \mathbb G$ iff 
the following conditions hold true: \par \noindent
(1) $(x, \dots , x) \in ({\mathbb P}(\wedge^r V))^m$ is a point of multiplicity $m-1$ for $\mathbb D_{\mathbb G}$, \par \noindent
(2) ${\Sing}_{m-1}(\mathbb D_{\mathbb G})$ has tangent space of maximal dimension at $(x, \dots,x)$.
\rm \medskip \par \noindent
It follows essentially from this result that  the previous map $\theta_{r,m}$ is generically injective, provided some suitable conditions are satisfied. \par \noindent Indeed let $(E,S)$ be a pair as above and let
$
g_{E,S}: X \to \mathbb G_{E,S}
$
be the classifying map in the Grassmannian $\mathbb G_{E,S}$ of $r$ dimensional subspaces of $S^*$. In section 4 we use the previous theorem to prove that: \medskip \par \noindent
 \underline {\bf Theorem} \it $\theta_{r,m}$ is generically injective under  the following assumptions:   \par \noindent
(1) ${\Aut}(X)$ is trivial and $m \geq 3$, \par \noindent
(2) $g_{E,S}$ is a morphism birational onto its image, \par \noindent
(3) the determinant map $d_{E,S} : \wedge^r S \to H^0(\mathcal O_X(1))$ is injective. 
\rm \medskip \par \noindent
However the main emphasis of this paper is on the case where $X \subset \mathbb P^n$ is a general curve of genus $g$ and  $\mathcal O_X(1)$ has degree $r(m + g - 1)$.  Assuming this, we consider
the moduli space $\mathcal S_r$ of pairs $(E,H^0(E))$, where $E$ is a stable vector bundle of determinant $\mathcal O_X(1)$ and $h^1(E) = 0$. Let $t$ be an $r$-root of $\mathcal O_X(1)$, then $\mathcal S_r$ is birational to $SU_X(r,0)$ via the map
$$
\alpha: \mathcal S_r \to SU_X(r,0),
$$
sending the moduli point of $(E,H^0(E))$ to the moduli point of $E(-t)$.  In the second half of the paper we prove  that 
$$ \theta_{r,m} \circ {\alpha}^{-1} = \beta \circ \theta_r, $$  where $\theta_r$ is the theta map of $SU_X(r,0)$ and $\beta$ is a rational map. Moreover we prove that the assumptions of the latter theorem are satisfied if $X$ is general of genus $g >> r$.  \it Then it follows that $\theta_r$ is generically injective as soon as $X$ is general of genus $g >> r$.  \rm \par \noindent This completes the description of the proof of the main theorem of this paper.   It seems interesting to use Pl\"ucker forms for further applications.   

\section {Pl\"ucker forms} \noindent
Let $V$ be a complex vector space of positive dimension $rm$  and let ${\wedge}^rV$ be 
the {\it $r$}-exterior power of $V$.  On  ${\wedge}^r V$ we consider the multilinear form
\begin{equation}
d_{r,m}: (\wedge^r V)^m \to \wedge^{rm} V \simeq \mathbb C, 
\end{equation}
such that  $$ d_{r,m}(w_1, \dots, w_m) := w_1 \wedge \dots \wedge w_m. $$ 
Notice that $d_{r,m}$ is symmetric if $r$ is even and skew symmetric if $r$ is odd. 
We fix $m$ copies $V_1, \dots , V_m$ of $V$ and the spaces $\mathbb P_s := \mathbb P(\wedge^r V_s)$, $s = 1, \dots , m$,  of dimension
$N := \binom{rm}r -1$. Then we consider the Segre embedding
$$
\mathbb P_1 \times \dots \times  \mathbb P_m \hookrightarrow \mathbb P^{(N+1)m-1}
$$
and its projections $\pi_s: \mathbb P_1 \times \dots \times \mathbb P_m \to \mathbb P_s$, $s= 1, \dots ,  m$. The form $d_{r,m}$ defines the following hyperplane section of $\mathbb P_1 \times \dots \times \mathbb P_m$:
\begin{equation}
\mathbb D_{r,m} := \{ (w_1, \dots, w_m) \in {\mathbb P}_1 \times ...\times {\mathbb P}_m   \ \vert \   d_{r,m}(w_1, \dots, w_m) = 0 \ \rbrace.
\end{equation}
 \begin{definition} $\mathbb D_{r,m}$ is the Pl\"ucker form of ${\mathbb P}(\wedge^r V)^m$.
 \end{definition}
\rm \par \noindent 
$\mathbb D_{r,m}$ is an element of the linear system $\mid \mathcal O_{\mathbb P_1 \times \dots \times \mathbb P_m}(1, \dots , 1) \mid$, where
$$\mathcal O_{\mathbb P_1 \times \dots \times \mathbb P_m}(1, \dots , 1) = {\pi}_1^* O_{{\mathbb P}_1}(1) \otimes ... \otimes {\pi}_m^* O_{{\mathbb P}_m}(1).$$
Let $e_1, \dots ,e_{rm}$ be a basis of $V$ and let $\mathcal I$ be the set of all naturally ordered sets $I :=$
$ i_1 < \dots < i_r $ of integers in $[1, rm]$. We fix in $\wedge^r V_s$ the basis 
$$
e^{(s)}_I:= e_{i_1} \wedge \dots \wedge e_{i_r}, \ \ I = i_1 < \dots < i_r \in \mathcal I.
$$
Then any vector of $\wedge^r V_s$ is of the form $ \sum p^{(s)}_{_I} e^{(s)}_{_{I}}$,  where the coefficients $p^{(s)}_I$ are
the standard Pl\"ucker coordinates on $\mathbb P_s$.  This implies that
$$
d_{r,m}(w_1,  \dots, w_m) = \sum_{I_1 \cup \dots \cup I_m = \lbrace 1, \dots ,rm \rbrace}  p^{(1)}_{_{I_1}} \cdots p^{(m)}_{_{I_m}} e^{(1)}_{_{I_1}} \wedge \dots \wedge e^{(m)}_{_{I_m}}
$$
for each $(w_1, \dots, w_m) \in (\wedge^r V)^m$. Note that, to give a decomposition $$ I_1 \cup \dots \cup I_m = \lbrace 1, \dots ,rm \rbrace $$ as above, is equivalent to give a permutation
$
\sigma: \lbrace 1, \dots ,rm \rbrace \to \lbrace 1, \dots ,rm \rbrace
$ which is strictly increasing on each of  the intervals $$ \mathbb U_1 := [1,r], \ \mathbb U_2 := Ê[r+1,2r], \ \dots,\ \mathbb U_m := [(m-1)r+1,mr]. $$ Let $\mathcal P$ be the set of these permutations, then we conclude that
$$
 d_{r,m}(w_1, \dots, w_m) = \sum_{\sigma \in \mathcal P} sgn(\sigma) p^{(1)}_{_{\sigma(\mathbb U_1)}} \cdots p^{(m)}_{_{\sigma(\mathbb U_M)}} e_1 \wedge \dots \wedge e_{rm}.
$$
\hfill\par\noindent
Assume that $w := (w_1, \dots, w_m) \in {(\wedge^r V)}^m$ is a vector defining the point $o \in \mathbb P_1 \times \dots \times \mathbb P_m$, we want to compute the  Taylor series  of $\mathbb D_{r,m}$ at $o$.
Let  $t := (t_1, \dots, t_m) \in (\wedge^r V)^m$, then we have the identity
 $$
d_{r,m}(w_1 + \epsilon \ t_1, \dots, w_m + \epsilon \ t_m) =  \sum_{k = 0 \dots m}\partial^{m-k}_w d_{r,m}(t) \epsilon^k.
$$
We will say that the function  
$$
\partial^{m-k}_w d_{r,m}: (\wedge^r V)^m \to \mathbf C,
$$
sending $t$ to the coefficient $\partial^{m-k}_wd_{r,m}(t)$ of $\epsilon^k$, is the \it $k$-th polar of $d_{r,m}$ at $w$, \rm cf. \cite{key6'}.  Let $S := s_1 < \dots < s_k$ be a strictly increasing sequence of $k$ elements of  $M := \lbrace 1, \dots , m \rbrace$. We
will put  $k := \mid S \mid$. Moreover, for $ w = (w_1, \dots, w_m) \in (\wedge^r V)^m$, we define
$ w_{_{S }} := w_{s_1} \wedge \dots \wedge w_{s_k}.$ Note that  $\partial^0_w(t) = d(w_1, \dots,w_m)$ for each $t$. If $m-k \geq 1$ it turns out that
\begin{equation}
\partial^{m-k}_w d_{r,m}(t) = \sum_{|S| = k} sgn(\sigma_S) w_{_{M-S}} \wedge  t_{_{S}},
\end{equation}
where $\sigma_S: M \to M$ is  the permutation $(1, \dots, m) \to (j_1, \dots, j_{m-k}, s_1, \dots, s_k)$ such that $S = s_1 < \dots < s_k$ and $j_1< \dots < j_{m-k}$. 
\begin{definition} Let $W := {\wedge}^rV$ then
$$
q: \mathbb P(W^m) \to {\mathbb P}_1 \times ... \times {\mathbb P}_m
$$
is the rational map  sending the point defined by the vector $w = (w_1, \dots  ,w_m)$ of $W^m$ to the $m$-tuple of points defined by the vectors $w_1, \dots, w_m $. \end{definition} \noindent
Note that the pull-back of $d_{r,m}$ by $q$ is a homogeneous polynomial
$$
q^*d_{r,m} \in {\Sim}^m W^* = H^0(\mathcal O_{_{\mathbb P(W)}}(m)).
$$
We mention, without its non difficult proof, the following result
\begin{proposition} $\partial^{m-k}_w(d_{r,m})$ is the $k$-th polar form at $w$ of $q^*d_{r,m}$. \end{proposition}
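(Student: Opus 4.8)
The plan is to recognize that the statement is a direct bookkeeping identification: once one sees that $q^*d_{r,m}$, read as a polynomial function on the vector space $W^m$, is nothing but $d_{r,m}$ itself, the two notions of polar are compared term by term in an auxiliary variable $\epsilon$.

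First I would unwind the rational map $q$. On representative vectors $q$ is the identity: it sends $w=(w_1,\dots,w_m)\in W^m$, regarded as a point of $\mathbb P(W^m)$, to the $m$-tuple $([w_1],\dots,[w_m])\in\mathbb P_1\times\dots\times\mathbb P_m$. Since $d_{r,m}$ is multilinear it is the equation of a section of $\mathcal O_{\mathbb P_1\times\dots\times\mathbb P_m}(1,\dots,1)$; and since $q^*\mathcal O_{\mathbb P_1\times\dots\times\mathbb P_m}(1,\dots,1)\cong\mathcal O_{\mathbb P(W^m)}(m)$, the pull-back $q^*d_{r,m}\in{\Sim}^m(W^m)^*$ is the homogeneous polynomial of degree $m$ given by the very same expression, $(w_1,\dots,w_m)\mapsto w_1\wedge\dots\wedge w_m\in\wedge^{rm}V\cong\mathbf C$. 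Hence $q^*d_{r,m}$ and $d_{r,m}$ coincide as polynomial functions on $W^m$.

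Next I would just invoke the definition of polar form used in the excerpt (cf.~\cite{key6'}): the $k$-th polar of a degree $m$ homogeneous polynomial $f$ at a point $w$ is the coefficient of $\epsilon^k$, as a function of the direction $t=(t_1,\dots,t_m)$, in the expansion of $f(w+\epsilon t)$ in powers of $\epsilon$. Applying this to $f=q^*d_{r,m}$ and using the previous paragraph,
$$q^*d_{r,m}(w+\epsilon t)=d_{r,m}(w_1+\epsilon t_1,\dots,w_m+\epsilon t_m)=\sum_{k=0}^{m}\partial^{m-k}_w d_{r,m}(t)\,\epsilon^k,$$
so the $\epsilon^k$-coefficient is precisely $\partial^{m-k}_w d_{r,m}$, which is the assertion. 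As a byproduct, expanding $w_1\wedge\dots\wedge w_m$ after the substitution $w_s\mapsto w_s+\epsilon t_s$ by multilinearity of the wedge product and collecting, for each fixed $k$, the terms in which exactly the slots $S=\{s_1<\dots<s_k\}$ carry a factor $t_s$ — moving those factors to the right produces the sign $sgn(\sigma_S)$ — one recovers the explicit formula $\partial^{m-k}_w d_{r,m}(t)=\sum_{|S|=k} sgn(\sigma_S)\,w_{M-S}\wedge t_S$ stated above.

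There is essentially no obstacle in this argument; the only point that deserves a line of care is checking that the normalization of ``polar form'' in \cite{key6'} is the one implicit here — namely the bare $\epsilon^k$-coefficient, with no extra combinatorial factor such as $\binom{m}{k}$ — so that the identification is exact rather than merely up to scalars. Everything else is purely formal.
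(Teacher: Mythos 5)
Your argument is correct: identifying $q^*d_{r,m}$ with the degree-$m$ homogeneous polynomial $(w_1,\dots,w_m)\mapsto w_1\wedge\dots\wedge w_m$ on $W^m$ and reading off the $\epsilon^k$-coefficient of $f(w+\epsilon t)$ is exactly the intended verification, and your remark about the normalization of ``polar'' (no binomial factor) is the one point worth checking against \cite{key6'}. The paper states this proposition explicitly \emph{without} proof (``we mention, without its non difficult proof''), so there is nothing to compare against; your write-up simply supplies the omitted routine argument.
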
 
\rm \par \noindent
Let $\hat o \in \mathbb P(W^m)$ be the point defined by $ w = (w_1, \dots, w_m)$ and let $o = q(\hat o)$. For  the tangent spaces to $\mathbb P(W^m)$ at $\hat o$ and to $\mathbb P_1 \times \dots \times  \mathbb P_m$ at $o$ one has
\begin{itemize}
\item[$\circ$] $
T_{_{\mathbb P(W^m), \hat o}} = W^m /  \langle \ w \ \rangle  $
\item[$\circ$] $
T_{_{\mathbb P_1 \times \dots \times \mathbb P_m, o}} = W / \langle \ w_1 \ \rangle  \oplus \dots \oplus W / \langle \ w_m \ \rangle .
$
\end{itemize} Moreover the tangent map
$$
dq_{\hat o}: W^m /  \langle  \ w \ \rangle  \  \longrightarrow \ W / \langle \ w_1 \ \rangle  \oplus \dots \oplus W / \langle \ w_m \ \rangle 
$$
is exactly the map sending 
$$(t_1, \dots,t_m) \mod  \langle \ w \ \rangle  \to (t_1 \mod  \langle \ w_1 \ \rangle , \dots, t_m \mod  \langle \ w_m \ \rangle ).$$
 In particular we have
$$ 
{\Ker} \  dq_{\hat o} = \lbrace (c_1w_1, \dots, c_mw_m), \ (c_1, \dots, c_m) \in  \mathbb C^m \rbrace /  \langle \ w \ \rangle  .
$$
We can now use $dq_{\hat o}$ to study some properties of ${\Sing}(\mathbb D_{r,m})$. We consider  the $k$-osculating tangent cone $\mathcal C^k_o \subset T_{_{\mathbb P_1 \times \dots \times \mathbb P_m , o}}$ to $\mathbb D_{r,m}$ at $o$. 
\begin{lemma} Keeping the above notations one has: \begin{enumerate} 
\item[(1)] $ {\Sing}_k( \mathbb D_{r,m}) = \{ o \in \mathbb D_{r,m} \  \vert \ \  \partial^{m-i}_w(d_{r,m}) = 0, \  i \leq k-1 \ \}.$ \medskip
\item[(2)] 
$ \mathcal C^k_o =  dq_{\hat o}( \{ t \in W^m \mod  \langle \ w \ \rangle    \ \vert \  \partial^{m-i}_w (d_{r,m})(t)  = 0, \ i \leq k \}) $  
 \end{enumerate}
\end{lemma}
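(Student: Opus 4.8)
The plan is to transport the local study of $\mathbb D_{r,m}$ near $o$ to the single projective space $\mathbb P(W^m)$ by means of the map $q$, so that the relevant infinitesimal data become exactly the polars $\partial^{m-i}_w d_{r,m}$ computed above.

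Let $\widetilde{\mathbb D}\subset\mathbb P(W^m)$ be the zero divisor of the homogeneous degree-$m$ form $q^*d_{r,m}$. Since every $w_s$ is non-zero, $q$ is a morphism near $\hat o$, and from the explicit description of $dq_{\hat o}$ together with $\dim{\Ker}\,dq_{\hat o}=m-1$ one reads off that $q$ is a submersion near $\hat o$, with fibres of dimension $m-1$. Being the pull-back of the Cartier divisor $\mathbb D_{r,m}$, $\widetilde{\mathbb D}$ coincides near $\hat o$ with the scheme-theoretic preimage $q^{-1}(\mathbb D_{r,m})$; as $q$ is a submersion there, $\widetilde{\mathbb D}$ carries near $\hat o$ the pulled-back local equation of $\mathbb D_{r,m}$. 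Consequently $\widetilde{\mathbb D}$ has the same multiplicity at $\hat o$ as $\mathbb D_{r,m}$ has at $o$, and the $k$-osculating tangent cone $\mathcal C^k_o$ of $\mathbb D_{r,m}$ at $o$ is the image under $dq_{\hat o}$ of the $k$-osculating tangent cone of $\widetilde{\mathbb D}$ at $\hat o$.

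It remains to describe the latter through the polars. For a hypersurface of $\mathbb P(W^m)$ cut out by a degree-$m$ form $F$, and the point $\hat o$ defined by $w$, write $F(w+\epsilon t)=\sum_{i=0}^m F_i(t)\,\epsilon^i$, so that $F_i$ is the $i$-th polar of $F$ at $w$ and, in any affine chart $w+U_0$ at $\hat o$, the degree-$i$ homogeneous part of a local equation. Hence the multiplicity at $\hat o$ is the least $i$ with $F_i\not\equiv 0$; and, since homogeneity of $F$ gives $F\bigl(w+\epsilon(t+\lambda w)\bigr)=\sum_i\epsilon^i(1+\epsilon\lambda)^{m-i}F_i(t)$, the locus $\{\,t\in W^m : F_i(t)=0,\ i\le k\,\}$ is stable under adding multiples of $w$ and therefore determines a subcone of $T_{\mathbb P(W^m),\hat o}=W^m/\langle w\rangle$, namely the $k$-osculating tangent cone. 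Now specialise $F=q^*d_{r,m}$ and invoke the Proposition, by which $F_i=\partial^{m-i}_w d_{r,m}$. One gets: (1) $o\in{\Sing}_k(\mathbb D_{r,m})$ --- i.e. $\mathbb D_{r,m}$, equivalently $\widetilde{\mathbb D}$, has multiplicity $\ge k$ --- precisely when $\partial^{m-i}_w d_{r,m}\equiv 0$ for every $i\le k-1$, the instance $i=0$ being the condition $d_{r,m}(w)=0$, i.e. $o\in\mathbb D_{r,m}$; and (2) $\mathcal C^k_o=dq_{\hat o}\bigl(\{\,t\in W^m\bmod\langle w\rangle : \partial^{m-i}_w d_{r,m}(t)=0,\ i\le k\,\}\bigr)$.

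The genuinely non-formal point --- and what I expect to be the main obstacle --- is the step ``$q$ is a submersion near $\hat o$, hence multiplicities and osculating cones of $\mathbb D_{r,m}$ at $o$ and of $\widetilde{\mathbb D}$ at $\hat o$ correspond under $dq_{\hat o}$'': one has to check the submersion claim from the formula for $dq_{\hat o}$, and to ensure $\mathcal C^k_o$ is understood through the linear structure of $\mathbb P_1\times\dots\times\mathbb P_m$ --- equivalently, through the polars --- so that the passage to the quotient $W^m/\langle w\rangle$ is legitimate. Alternatively one can avoid $q$ and work directly in a product affine chart: fixing decompositions $W=\langle w_s\rangle\oplus U_s$, the product of the charts $w_s+U_s$ is an affine neighbourhood of $o$ in which $\mathbb D_{r,m}$ has local equation $d_{r,m}(w_1+u_1,\dots,w_m+u_m)$, with degree-$i$ homogeneous part $\partial^{m-i}_w d_{r,m}(u)$ for $u=(u_1,\dots,u_m)\in\prod_s U_s$; then the only thing to verify is that, once $\partial^{m-j}_w d_{r,m}\equiv 0$ on $W^m$ for all $j<i$, the multilinear expansion above shows $\partial^{m-i}_w d_{r,m}$ is unchanged when each $u_s$ is replaced by $u_s+c_sw_s$, so that its vanishing on $\prod_s U_s$ forces vanishing on all of $W^m$ --- the same content in a different guise.
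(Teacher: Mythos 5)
Your proof is correct and follows essentially the same route as the paper: both pull the local study back along $q$ to $\mathbb P(W^m)$ and read off multiplicity and osculating cones from the expansion $d_{r,m}(w+\epsilon t)=\sum_i\partial^{m-i}_w d_{r,m}(t)\,\epsilon^i$. The paper phrases this by restricting to the affine lines $L_t$ whose tangent directions sweep out $T_{\mathbb P_1\times\dots\times\mathbb P_m,o}$ via $dq_{\hat o}$, while you make explicit the submersion property of $q$ and the invariance of $\{\partial^{m-i}_w d_{r,m}(t)=0,\ i\le k\}$ under $t\mapsto t+\lambda w$ --- details the paper leaves implicit.
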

\begin{proof} By the previous description of $dq_{\hat o}$ any one dimensional subspace  $ l $ of  $T_{\mathbb P_1 \times \dots \times \mathbb P_m, o} $ is the isomorphic image by $dq_{\hat o}$ of the tangent space at $\hat o$ to an affine line 
$$L_t := \lbrace w + \epsilon t  \  \mid  \  \epsilon \in \mathbb C \rbrace \subset  \mathbb P(W^m),$$
 for some $t = (t_1, \dots, t_m) \in W^m$.   On the other hand the 
pull-back of  the Taylor series of $\mathbb D_{r,m}$ to $L_t$ is
$$
d_{r,m}(w + \epsilon \ t) = \sum_{i = 0 \dots m}  \partial^{m-i}_w(d_{r,m})(t) \epsilon^i,
$$
this implies (1) and (2).  \end{proof} \par \noindent Let $o \in \mathbb P_1 \times \dots \times \mathbb P_m$ be the point defined by the vector $(w_1, \dots, w_m)$  and let $v \in T_{\mathbb P_1 \times \dots \times \mathbb P_m, o}$ be a tangent vector to an arc of curve
$$ \lbrace w_1+ \epsilon \ t_1, \dots, w_m+ \epsilon \ t_m, \ \epsilon \in \mathbf C \rbrace. $$
Applying the lemma and the equality (3),   it follows:
\begin{theorem}  \par \noindent
\begin{enumerate}
\item{} $ o \in {\Sing}_k({\mathbb D}_{r,m})$ \  $\Longleftrightarrow$  \ $w_S = 0,$  \ $\forall S \in \mathcal I $,  \
 $\mid S \mid = m-k+1$.
\medskip \par \noindent
\item{} $ v $ is tangent to ${\Sing}_k (\mathbb D_{r,m})$ at $o$ iff  
$$ \sum_{s \in S} sgn(\sigma_s) w_{S-\lbrace s \rbrace} \wedge  t_s = 0, \ \forall S \in \mathcal I,  \ \mid S \mid = m-k+1,$$
\end{enumerate}
where $\sigma_s$ is  the permutation of $S$ shifting $s$ to the bottom and keeping the natural order in $S - s$.

\end{theorem}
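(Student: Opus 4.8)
My plan is to read off both statements directly from the preceding Lemma and the polar formula~(3), the one essential input being an elementary fact about exterior powers of $V$.

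\emph{Proof of (1).} By the Lemma, $o\in{\Sing}_k(\mathbb{D}_{r,m})$ exactly when the polars $\partial_w^{m-i}d_{r,m}$ vanish identically in $t$ for every $i\le k-1$. By~(3) one has $\partial_w^{m-i}d_{r,m}(t)=\sum_{|S|=i}sgn(\sigma_S)\,w_{M-S}\wedge t_S$, the sum over $S\subseteq M=\{1,\dots,m\}$. The summands attached to distinct $S$ of equal size involve disjoint sets of ``active'' factors $t_s$, $s\in S$ (if $S\neq S'$ some element of $S$ lies outside $S'$), so putting $t_j=0$ for all $j\notin S_0$ kills every summand but the $S_0$ one; hence $\partial_w^{m-i}d_{r,m}\equiv0$ is equivalent to $w_{M-S}\wedge t_{s_1}\wedge\cdots\wedge t_{s_i}=0$ for every $S$ with $|S|=i$ and all $t_{s_j}\in\wedge^rV$. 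I then invoke two elementary facts: the wedge-multiplication $(\wedge^rV)^{\otimes i}\to\wedge^{ri}V$ is onto (a basis vector $e_{j_1}\wedge\cdots\wedge e_{j_{ri}}$ is a product of $i$ decomposable $r$-vectors), so the $t_{s_1}\wedge\cdots\wedge t_{s_i}$ span $\wedge^{ri}V$; and the pairing $\wedge^{r(m-i)}V\times\wedge^{ri}V\to\wedge^{rm}V\cong\mathbb{C}$ is perfect. Together they give $w_{M-S}\wedge t_S=0$ for all $t$ $\iff$ $w_{M-S}=0$. Thus $o\in{\Sing}_k$ iff $w_T=0$ for every $T$ with $|T|\in\{m-k+1,\dots,m\}$, and since $w_{T'}=\pm\,w_{T'\setminus T}\wedge w_T$ for $T\subseteq T'$, the conditions with $|T|=m-k+1$ already imply the rest. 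This is~(1).

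\emph{Proof of (2).} By~(1), ${\Sing}_k(\mathbb{D}_{r,m})$ is cut out by the vanishing of the coordinates of the wedges $w_S$, $|S|=m-k+1$; the equations with larger $|S|$ are redundant, consistent with the fact that among the order-$\le k-1$ polars only order $k-1$ contributes at $o$ (for $i<k-1$ every term in the first variation of $\partial_w^{m-i}d_{r,m}$ still contains a factor $w_T$ with $|T|=m-k+1$, which vanishes at $o$). A tangent vector at $o$ is $v=dq_{\hat o}\big((t_1,\dots,t_m)\bmod\langle w\rangle\big)$, and $v$ is tangent to ${\Sing}_k$ iff each $w_S=0$ is preserved to first order along $\epsilon\mapsto(w_s+\epsilon t_s)_{s\in S}$, i.e.
$$
\frac{d}{d\epsilon}\Big|_{\epsilon=0}\big(w_{s_1}+\epsilon t_{s_1}\big)\wedge\cdots\wedge\big(w_{s_{m-k+1}}+\epsilon t_{s_{m-k+1}}\big)=0,\qquad|S|=m-k+1.
$$
Since $o\in{\Sing}_k$ the $\epsilon^0$-term $w_S$ vanishes, and the $\epsilon^1$-term is the first variation of $w_S$, the sum over $s\in S$ of $w_S$ with the factor $w_s$ replaced by $t_s$; pulling $t_s$ into the last position introduces precisely the sign $sgn(\sigma_s)$ of the permutation in the statement (the parity of $r$ is accounted for correctly, $d_{r,m}$ being symmetric for even $r$ and skew for odd $r$), giving $\sum_{s\in S}sgn(\sigma_s)\,w_{S-\{s\}}\wedge t_s=0$. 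These equations are linear in $t$ and factor through $W^m/\langle w\rangle$: for $t_i=c_iw_i$ each term equals $c_s\,w_{S-\{s\}}\wedge w_s=\pm c_s\,w_S=0$. This is~(2).

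\emph{Main obstacle.} The heart of the matter is the linear-algebra step in~(1)---surjectivity of the block wedge map together with Poincar\'e-type duality on $\wedge^\bullet V$---which converts the functional identities ``$\partial_w^{m-i}d_{r,m}\equiv0$'' into the finite conditions ``$w_T=0$''. The remaining ingredients are the bookkeeping remark that the sum over subsets $S$ decouples (distinct $S$ use disjoint variables) and, for~(2), that only the top polar order contributes to the tangent space; the sign analysis is routine but must follow the parity of~$r$.
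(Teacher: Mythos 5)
Your proof is correct and follows the same route as the paper's own (very terse) argument: both reduce the statement to Lemma 2.4 together with the polar formula (3) and then read off the conditions, for (1) by forcing each summand $w_{M-S}\wedge t_S$ to vanish identically and for (2) by extracting the coefficient of $\epsilon$ in $(w+\epsilon t)_S$. The only difference is that you supply the linear-algebra details (decoupling over the subsets $S$, surjectivity of the block wedge map, perfectness of the pairing, redundancy of the conditions with $|T|>m-k+1$, and invariance under $\Ker\, dq_{\hat o}$) that the paper compresses into ``this is equivalent to''.
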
 \noindent 

  \begin{proof} (i) By Lemma 2.4 (1),  $o \in {\Sing}_k(\mathbb D_{r,m})$ iff the $i$-th polar $\partial^i_w(d_{r,m})$ is zero for $i \leq k-1$. This is equivalent
to $w_S = 0$ for $\mid S \mid = m-k+1$. (ii) As above, consider a tangent vector $v$ at $o$ to the arc of curve $ \lbrace w_1+ \epsilon \ t_1, \dots, w_m+ \epsilon \ t_m, \ \epsilon \in \mathbf C \rbrace.$ By lemma 2.4 (2),  $v$ is tangent to ${\Sing}_k( \mathbb D_{r,m})$ at $o$ iff the coefficient of $\epsilon$ in  $(w + \epsilon t)_S$ is zero, $ \forall \ \mid S \mid = m - k + 1$. This is equivalent to
the condition $\sum_{s \in S} sgn(\sigma_s) w_{S-\lbrace s \rbrace} \wedge  t_s = 0, \ \forall S \in \mathcal I,  \ \mid S \mid = m-k+1$.
\end{proof}
\begin{corollary} The Pl\"ucker form $\mathbb D_{r,m}$ has no point of multiplicity $\geq m$.
 \end{corollary}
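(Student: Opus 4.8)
The plan is to deduce the corollary directly from Theorem 2.5. First I would record the standard identification of ``point of multiplicity $\geq m$'' with the stratum ${\Sing}_m(\mathbb{D}_{r,m})$. Fix $o \in \mathbb{D}_{r,m}$ defined by a vector $w = (w_1,\dots,w_m)$. Restricting the form $d_{r,m}$ to a general arc $\{ w + \epsilon t : \epsilon \in \mathbb{C} \}$ through $o$ produces the polynomial $\sum_{i=0}^{m} \partial^{m-i}_w(d_{r,m})(t)\,\epsilon^i$, the Taylor expansion of $d_{r,m}$ recalled before Lemma 2.4. Hence $\mathrm{mult}_o(\mathbb{D}_{r,m}) \geq m$ precisely when the coefficients of $\epsilon^0,\dots,\epsilon^{m-1}$ vanish identically in $t$, i.e. when $\partial^{m-i}_w(d_{r,m}) \equiv 0$ for $i \leq m-1$; by Lemma 2.4 (1) this says exactly that $o \in {\Sing}_m(\mathbb{D}_{r,m})$. (The coefficient of $\epsilon^0$ is $d_{r,m}(w)$, which vanishes anyway because $o \in \mathbb{D}_{r,m}$, so it imposes nothing.)

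Next I would apply Theorem 2.5 (i) with $k = m$. It asserts that $o = (w_1,\dots,w_m) \in {\Sing}_m(\mathbb{D}_{r,m})$ if and only if $w_S = 0$ for every subset $S \subseteq \{1,\dots,m\}$ with $|S| = m - m + 1 = 1$; that is, if and only if $w_1 = w_2 = \dots = w_m = 0$. But each $w_s$ is a nonzero vector of $\wedge^r V_s$, being a representative of the $s$-th component of the point $o$ in $\mathbb{P}_s = \mathbb{P}(\wedge^r V_s)$. So this condition is never satisfied, ${\Sing}_m(\mathbb{D}_{r,m}) = \emptyset$, and $\mathbb{D}_{r,m}$ carries no point of multiplicity $\geq m$.

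There is no genuine obstacle here: the only point needing a word of care is the passage between multiplicity and the vanishing of the polars $\partial^{m-i}_w(d_{r,m})$, and this is already packaged in Lemma 2.4 and equation (3); the rest is merely the remark that a vector representing a point of a projective space cannot be the zero vector. As an \emph{a priori} sanity check, note that $\mathbb{D}_{r,m}$ lies in $\mid \mathcal{O}_{\mathbb{P}_1 \times \dots \times \mathbb{P}_m}(1,\dots,1) \mid$, so its restriction to any of the curves $\epsilon \mapsto ([w_1 + \epsilon t_1],\dots,[w_m + \epsilon t_m])$ has degree at most $m$ in $\epsilon$; this already forces $\mathrm{mult}_o(\mathbb{D}_{r,m}) \leq m$ everywhere, and the content of the corollary is precisely that the value $m$ itself is never attained.
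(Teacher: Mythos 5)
Your argument is correct and is essentially identical to the paper's own proof: both apply Theorem 2.5 (i) with $k = m$ to conclude that a point of multiplicity $\geq m$ would force $w_S = 0$ for all $|S| = 1$, i.e.\ $w_1 = \dots = w_m = 0$, which is impossible for representatives of points in projective space. The extra care you take in identifying multiplicity with the vanishing of the polars is already packaged in Lemma 2.4 and is left implicit in the paper.
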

\begin{proof} Assume $\mathbb D_{r,m}$ has multiplicity $ \geq m$ at $o$. Then $w_S = 0$, \ $\forall S$ with $|S| = 1$. This  means $w_1 = \dots = w_m = 0$, which is impossible.
 \end{proof} \rm \par \noindent
We are specially interested to the behaviour of $\mathbb D_{r,m}$ along its intersection with the diagonal
\begin{equation}
\Delta \subset \mathbb P_1 \times \dots \times \mathbb P_m \subset \mathbb P^{(N+1)m - 1}.
\end{equation}
We recall that $\Delta$ spans the projectivized space of the symmetric tensors of $(\wedge^r V)^{\otimes m}$. Moreover, $\Delta$
is  the $m$-Veronese embedding of $\mathbb P(\wedge^r V)$. If $r$ is odd $d_{r,m}$ is skew symmetric and  $\mathbb D_{r,m}$ contains $\Delta$. If $r$ is even  then $$ \mathbb D_{r,m} \cdot \Delta $$ is an interesting hypersurface of degree $m$ in the projective space $\Delta$.  \par \noindent  Applying Theorem 2.5 to a point $o$ in the diagonal, we have:
 \begin{corollary} Let $o \in \Delta$. Then:\par \noindent\begin{enumerate}
\item{}  $ o \in {\Sing}_k(\mathbb D_{r,m})$  \ $\Longleftrightarrow$ \  $w^{\wedge m-k+1} = 0;$
\medskip \par \noindent
\item{}  $v \in T_{{\Sing}_k(\mathbb D_{r,m}), o}$ if and only if   
$$ 
\sum_{j \in S} sgn(\sigma_s) w^{\wedge(m-k)} \wedge t_{j} = 0, \ \ \forall S \in \mathcal I, \ \mid S \mid = m-k+1. 
$$
\end{enumerate}
\end{corollary}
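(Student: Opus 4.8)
The plan is to deduce the corollary as the special case of Theorem 2.5 in which the point $o$ lies on the diagonal. First I would record the shape of a representative vector: since $\Delta$ is the image of $\mathbb{P}(\wedge^r V)$ under the diagonal (equivalently $m$-Veronese) embedding, a point $o \in \Delta$ is defined by a vector of the form $(w,\dots,w)\in(\wedge^r V)^m$ with $0\neq w\in\wedge^r V$. The single elementary observation that does all the work is that, for such a vector and any subset $S\subseteq M=\{1,\dots,m\}$, the mixed product $w_S=w_{s_1}\wedge\cdots\wedge w_{s_{|S|}}$ appearing in Theorem 2.5 collapses to the pure power $w^{\wedge|S|}$, because all its factors coincide.

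With this in hand, part (1) is immediate from Theorem 2.5(i): $o\in\Sing_k(\mathbb{D}_{r,m})$ if and only if $w_S=0$ for every $S$ with $|S|=m-k+1$, and each such $w_S$ now equals $w^{\wedge(m-k+1)}$, so the entire family of conditions reduces to the one equation $w^{\wedge(m-k+1)}=0$. For part (2) I would take $v\in T_{\mathbb{P}_1\times\cdots\times\mathbb{P}_m,o}$ to be the tangent vector at $o$ to an arc $\{(w+\epsilon t_1,\dots,w+\epsilon t_m)\}$, $t=(t_1,\dots,t_m)\in(\wedge^r V)^m$, and substitute into Theorem 2.5(ii): for $|S|=m-k+1$ and $s\in S$ the factor $w_{S-\{s\}}$ is a wedge of $m-k$ copies of $w$, hence equals $w^{\wedge(m-k)}$, and one lands exactly on the stated condition $\sum_{s\in S}sgn(\sigma_s)\,w^{\wedge(m-k)}\wedge t_s=0$ for all $S$ with $|S|=m-k+1$.

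Honestly there is no serious obstacle here; the only thing worth a sentence of care is well-definedness with respect to the choices made. The representative $(w,\dots,w)$ of $o$ is pinned down only up to a common scalar, and more generally $o$ is represented by any $(c_1 w,\dots,c_m w)$ with $c_i\neq 0$; since every displayed condition is homogeneous in each argument, it is insensitive to this choice. Likewise a tangent vector $v$ determines $t$ only modulo $\Ker dq_{\hat o}=\{(c_1w,\dots,c_mw)\}/\langle w\rangle$, and replacing $t_s$ by $t_s+c_s w$ alters the left-hand side of the condition in part (2) by $\sum_{s\in S}sgn(\sigma_s)\,c_s\,w^{\wedge(m-k+1)}$, which vanishes precisely because part (1) forces $w^{\wedge(m-k+1)}=0$ at a point of $\Sing_k$. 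Thus the condition is well posed on $T_{\Sing_k(\mathbb{D}_{r,m}),o}$, and the corollary follows.
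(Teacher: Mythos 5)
Your proposal is correct and is exactly the argument the paper intends: the corollary is stated as an immediate specialization of Theorem 2.5 to a diagonal point $(w,\dots,w)$, where every mixed product $w_S$ collapses to the pure power $w^{\wedge |S|}$. Your extra paragraph on independence of the representative and of the lift $t$ modulo $\Ker\, dq_{\hat o}$ is a sound (if unneeded) refinement of what the paper leaves implicit.
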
 \rm \par \noindent
\begin{remark} \rm Let $o \in \Delta$ be as above, it follows from the corollary that: 
$$o \in \Delta \cap {\Sing}_{m-1}(\mathbb D_{r,m})  \Longleftrightarrow  w \wedge w = 0.$$ 
It is easy to see that $ \Delta \subset {\Sing}_{m-1}(\mathbb D_{r,m})$  if $r$ is odd. Let  $r$ be  even then 
$$ \mathbb G \subset  \Delta \cap  {\Sing}_{m-1}(\mathbb D_{r,m}), $$  
where $\mathbb G$  is the Pl\"ucker embedding in $\Delta = \mathbb P(\wedge^r V)$ of the Grassmannian  $G(r,V)$. However it is not  true that the equality holds in the latter case. In fact the equation  $w \wedge w = 0$ defines  $\mathbb G$ if and only if  $r=2$, see \cite{key7bis}. 
\end{remark}

\section{Pl\"ucker forms and Grassmannians} \noindent

In this section we will keep the notation $\mathbb G$ for the Pl\"ucker embedding of $G(r,V)$.  Our purpose is now to show that $\mathbb G$ is uniquely reconstructed from $\mathbb D_{r,m}$ and the diagonal $\Delta$. More precisely we will show the following:
\begin{theorem} Let $m \geq 3$, then
$$
\mathbb G = \lbrace o \in \Delta \cap  {\Sing}_{m-1}(\mathbb D_{r,m})   \  \mid \ {\dim} \ T_{{\Sing}_{m-1}(\mathbb D_{r,m}), o} \ \  is  \ \ maximal.  \  \rbrace
$$
\end{theorem}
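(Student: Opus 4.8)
The plan is to translate the statement, via Corollary~2.7 and Remark~2.8, into an elementary inequality for the multiplication map $t\mapsto w\wedge t$ on $\wedge^{r}V$, and then to establish that inequality. Throughout, for $0\neq w\in\wedge^{r}V$ write $L_{w}\colon\wedge^{r}V\to\wedge^{2r}V$, $L_{w}(t)=w\wedge t$, and $K_{w}:=\Ker L_{w}$; note $w\in K_{w}$ as soon as $w\wedge w=0$.

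\textbf{Reduction to a statement in linear algebra.} For $o\in\Delta$ corresponding to $w$, Remark~2.8 gives $o\in\Delta\cap{\Sing}_{m-1}(\mathbb D_{r,m})$ iff $w\wedge w=0$ (automatic for $r$ odd), and $\mathbb G$ is always contained in this locus. Specialising Corollary~2.7(ii) to $k=m-1$, where $|S|=2$ and $w^{\wedge(m-k)}=w$, the equations cutting out $T_{{\Sing}_{m-1}(\mathbb D_{r,m}),o}$ read, for all $i<j$: $w\wedge t_{i}+w\wedge t_{j}=0$ when $r$ is even, and $w\wedge(t_{i}-t_{j})=0$ when $r$ is odd. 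Since $m\geq 3$, comparing these relations for three distinct indices $i,j,k$ shows that this tangent space equals $(K_{w}/\langle w\rangle)^{m}$ when $r$ is even, and $\{(\bar t_{1},\dots,\bar t_{m}):\bar t_{i}\equiv\bar t_{j}\ (\mathrm{mod}\ K_{w}/\langle w\rangle)\}$ when $r$ is odd; so $\dim T_{{\Sing}_{m-1}(\mathbb D_{r,m}),o}$ equals $m(\dim K_{w}-1)$, resp. $\bigl(\binom{rm}{r}-1\bigr)+(m-1)(\dim K_{w}-1)$, in either case a strictly increasing function of $\dim K_{w}$. Hence the theorem follows once we show that $\dim K_{w}$ attains its maximum (over all $0\neq w$) exactly when $w$ is decomposable, i.e. $[w]\in\mathbb G$.

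\textbf{The inequality $\rank L_{w}\geq\binom{(m-1)r}{r}$.} Because decomposable tensors span $\wedge^{r}V^{*}$, the hyperplanes $K\wedge\wedge^{r-1}V\subset\wedge^{r}V$, for $\dim K=rm-r$, have trivial common intersection, so there is an $(rm-r)$-dimensional subspace $K$ with $w\notin K\wedge\wedge^{r-1}V$. Fixing a splitting $V=C'\oplus K$ with $\dim C'=r$, the component $w_{r,0}$ of $w$ in $\wedge^{r}C'$ is then nonzero, and for $c\in\wedge^{r}K$ the bidegree-$(r,r)$ part of $w\wedge c$ in $(\wedge^{\bullet}C',\wedge^{\bullet}K)$ equals $w_{r,0}\wedge c$, which is nonzero for $c\neq 0$ since $\wedge^{r}C'$ is one-dimensional. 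Thus $L_{w}|_{\wedge^{r}K}$ is injective, proving the inequality; and for a decomposable $w=v_{1}\wedge\dots\wedge v_{r}$ the image $w\wedge\wedge^{r}V$ is canonically $\wedge^{r}(V/\langle v_{1},\dots,v_{r}\rangle)$, of dimension exactly $\binom{(m-1)r}{r}$. Hence $\dim K_{w}\leq\binom{rm}{r}-\binom{(m-1)r}{r}$, with equality when $[w]\in\mathbb G$.

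\textbf{Equality forces $[w]\in\mathbb G$ — the main obstacle.} Let $a:=\dim A_{w}$, $A_{w}:=\{v\in V:v\wedge w=0\}$; it is classical that $a\leq r$ with equality iff $w$ is decomposable. Writing $w=v_{1}\wedge\dots\wedge v_{a}\wedge w'$ with $A_{w'}=0$ in $\bar V:=V/\langle v_{1},\dots,v_{a}\rangle$ (so $w'$ has degree $r-a$), one checks $\operatorname{im}L_{w}=v_{1}\wedge\dots\wedge v_{a}\wedge(w'\wedge\wedge^{r}\bar V)$, whence $\rank L_{w}=\rank\bigl(\wedge^{r}\bar V\to\wedge^{2r-a}\bar V,\ s\mapsto w'\wedge s\bigr)$. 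Since $A_{w'}=0$ forces $r-a\geq 2$, it remains to prove that, in this situation, this rank is \emph{strictly} larger than $\binom{(m-1)r}{r}$. The strategy is to refine the bidegree argument above: as no nonzero vector divides $w'$, its linear span has dimension $>r-a$, and for $m\geq 3$ the dimension $rm-a$ of $\bar V$ is large enough to adjoin to $\wedge^{r}K$ further summands of bidegree $(r-1,1)$ and lower on which $L_{w}$ remains injective, producing an injective restriction of dimension $>\binom{(m-1)r}{r}$. Carrying out this rank estimate for wedge-multiplication by a non-decomposable form with no vector divisor is the crux of the whole argument, and it is precisely where $m\geq 3$ enters: for $m=2$ and $r\geq 3$ one has $\rank L_{w}=1$ for every $w\neq 0$, so $\dim K_{w}$ is constant and the asserted equality $\mathbb G=\{\dots\}$ is false.
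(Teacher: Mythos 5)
Your reduction is correct and coincides with the paper's: using Corollary 2.7 you show that for $o\in\Delta\cap{\Sing}_{m-1}(\mathbb D_{r,m})$ defined by $w$, the codimension of $T_{{\Sing}_{m-1}(\mathbb D_{r,m}),o}$ equals $m\cdot{\rank}\,L_w$ for $r$ even and $(m-1)\cdot{\rank}\,L_w$ for $r$ odd (this is the paper's Lemma 3.3), so the theorem is equivalent to the statement that ${\rank}\,L_w\geq\binom{(m-1)r}{r}$ with equality precisely for decomposable $w$ (the paper's Proposition 3.2 with $s=r$, $d=rm$; its hypothesis $d-2r\geq s$ is exactly $m\geq 3$). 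Your proof of the inequality --- choosing a codimension-$r$ subspace $K$ with $w\notin K\wedge\wedge^{r-1}V$ and checking that $L_w$ is injective on $\wedge^rK$ via the top bidegree component --- is a correct, coordinate-free rendering of the first half of the paper's argument, and your $m=2$ remark at the end is a sound sanity check.

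The gap is the one you flag yourself: the claim that equality forces decomposability is only described as a ``strategy,'' not proved, and it is the substantive content of the theorem. Reducing to $w=v_1\wedge\dots\wedge v_a\wedge w'$ with $w'$ admitting no vector divisor is legitimate, but the remaining assertion --- that wedge-multiplication by such a $w'$ on $\wedge^r\bar V$ has rank strictly greater than $\binom{(m-1)r}{r}$ --- is essentially as hard as the original problem, and the proposed route (``adjoin further summands of bidegree $(r-1,1)$ and lower on which $L_w$ remains injective'') is not an argument: injectivity of $L_w$ on a larger sum $\wedge^rK\oplus U$ requires $L_w(U)\cap L_w(\wedge^rK)=0$, an independence statement you do not address, and no candidate $U$ is exhibited. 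What is actually needed is a single nonzero element of ${\rm Im}\,L_w$ lying outside $L_w(\wedge^rK)$; the paper produces one by induction on $s$ for the maps $\mu^s_w:\wedge^sE\to\wedge^{r+s}E$, constructing at each stage a nonzero $\tau\in{\rm Im}\,\mu^s_w\cap{\Ker}\,p^+$ from the one at stage $s-1$ by wedging with a suitable basis vector $e_k\notin N={\Ker}\,\mu^1_\tau$, the base case $s=1$ (namely $\dim{\rm Im}\,\mu^1_w>d-r$ for non-decomposable $w$) being quoted from Greub, Prop.\ 6.27. Until an argument of this kind is supplied, the inclusion of the right-hand locus in $\mathbb G$ remains unproved.
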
 \medskip  \par \noindent
For the proof we need some preparation.  The following result of linear algebra will be useful: let $E$ be a vector space of dimension $d$ and let $w \in \wedge ^r E$ be a non zero vector. Consider the linear map
$$
\mu^s_w: \wedge^s E  \to \wedge^{r+s} E
$$
sending $t$ to $w \wedge t$. We have:
\begin{proposition} Let $d - 2r \geq s$, then $\mu^s_w$  has rank  $\geq \binom {d-r}s$ and the equality holds if and only if the vector $w$ is decomposable.
 \end{proposition}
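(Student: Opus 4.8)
The plan is to compute $\operatorname{rank}\mu^s_w$ by comparing $\mu^s_w$ with a linear projection of its target. Since $w\neq 0$, choose a basis $e_1,\dots,e_d$ of $E$ in which $w$ has a nonzero Pl\"ucker coefficient on $e_1\wedge\dots\wedge e_r$; after rescaling, write $w=e_G+w'$ with $e_G:=e_1\wedge\dots\wedge e_r$ and $w'\in\bigoplus_{i<r}\wedge^iG\otimes\wedge^{r-i}H$, where $G:=\langle e_1,\dots,e_r\rangle$, $H:=\langle e_{r+1},\dots,e_d\rangle$, so that $E=G\oplus H$ and $\dim H=d-r$. Let $p\colon\wedge^{r+s}E\to\wedge^rG\otimes\wedge^sH$ be the projection coming from the K\"unneth decomposition. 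Because $e_G$ is the unique basis $r$-vector supported in $[1,r]$, the composite $p\circ\mu^s_w$ carries $\wedge^sH\subset\wedge^sE$ isomorphically onto $\wedge^rG\otimes\wedge^sH$ via $\beta\mapsto e_G\wedge\beta$; hence $p\circ\mu^s_w$ is surjective and $\operatorname{rank}\mu^s_w\ge\operatorname{rank}(p\circ\mu^s_w)=\binom{d-r}{s}$, with no hypothesis on $d,r,s$. If $w$ is decomposable we may take $w=e_G$ outright, and then $\ker\mu^s_w=G\wedge\wedge^{s-1}E$, of codimension $\binom{d-r}{s}$; this settles the lower bound and the ``if'' direction (throughout we assume $s\ge 1$, the case $s=0$ being degenerate).

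For the converse, note that $\operatorname{rank}\mu^s_w=\binom{d-r}{s}$ is equivalent, by rank--nullity and the inclusion $\ker\mu^s_w\subseteq\ker(p\circ\mu^s_w)$, to the equality $\ker\mu^s_w=\ker(p\circ\mu^s_w)$, the latter having dimension $\binom{d}{s}-\binom{d-r}{s}$. I would first record the case $s=1$ as a lemma: if $F_1:=\ker\mu^1_w=\{v\in E:w\wedge v=0\}$ has $\dim F_1=r$, then $w$ is decomposable. To prove it, pick a complement $E=F_1\oplus C$ and write $w=\sum_i w_i$ with $w_i\in\wedge^iF_1\otimes\wedge^{r-i}C$. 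For $v\in F_1$ the term $w_r\wedge v$ vanishes automatically (as $\wedge^{r+1}F_1=0$), so $w\wedge v=\sum_{i<r}w_i\wedge v$ lies in distinct K\"unneth pieces; working in a basis of each $\wedge^{r-i}C$ and using that an element of $\wedge^iF_1$ annihilated by wedging with every vector of $F_1$ must vanish once $i<\dim F_1=r$, one gets $w_i=0$ for $i<r$, hence $w\in\wedge^rF_1$, which is one-dimensional.

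Now suppose $s\ge 2$ and $\operatorname{rank}\mu^s_w=\binom{d-r}{s}$, i.e.\ $\ker\mu^s_w=\ker(p\circ\mu^s_w)$; I claim $w$ is decomposable. Consider the linear map $E\to\wedge^rG\otimes\wedge^1H$ sending $v$ to $\pi_{r,1}(w\wedge v)$, the K\"unneth component of $w\wedge v$ in $G$-degree $r$. Its restriction to $H$ is the isomorphism $v\mapsto e_G\wedge v$ onto $\wedge^rG\otimes H$, so its kernel $G'$ satisfies $\dim G'=r$, and clearly $\ker\mu^1_w\subseteq G'$. For $v\in G'$ and $\beta\in\wedge^{s-1}H$ one computes $p(w\wedge v\wedge\beta)=\pi_{r,1}(w\wedge v)\wedge\beta=0$, so $v\wedge\beta\in\ker(p\circ\mu^s_w)=\ker\mu^s_w$, i.e.\ $w\wedge v\wedge\beta=0$ for all such $\beta$. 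Here the hypothesis $d-2r\ge s$ enters: it gives $r+s\le d-r=\dim H$, and then a basis-monomial argument in each K\"unneth piece shows that no nonzero element of $\wedge^{r+1}E$ is annihilated by wedging with all of $\wedge^{s-1}H$; applied to $w\wedge v$ this forces $w\wedge v=0$ for every $v\in G'$. Thus $G'\subseteq\ker\mu^1_w$, whence $\dim\ker\mu^1_w=r$, and the lemma concludes that $w$ is decomposable.

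The step I expect to be the real work is the third paragraph: producing the $r$-dimensional subspace $G'$ on which $\pi_{r,1}(w\wedge v)$ vanishes, and verifying that the multiplication $\wedge^{r+1}E\otimes\wedge^{s-1}H\to\wedge^{r+s}E$ has no nonzero element of $\wedge^{r+1}E$ in its left kernel precisely when $d-2r\ge s$ — this is where the hypothesis is spent, and it is what makes the reduction to the case $s=1$ go through. The other ingredients are routine bookkeeping with the K\"unneth decomposition, but one must be careful with signs and with the boundary ranges ($s=0$, or $s$ near $d-r$), where the equality characterization genuinely fails.
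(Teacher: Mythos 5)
Your proof is correct, and while it opens exactly as the paper does, the converse is handled by a genuinely different mechanism. Both arguments fix the splitting $E=G\oplus H$ adapted to a nonzero Pl\"ucker coordinate of $w$, project onto $\wedge^r G\otimes\wedge^s H$, and get the lower bound from the fact that $p\circ\mu^s_w$ restricts to an isomorphism on $\wedge^s H$; both ultimately rest on the $s=1$ statement that $\dim\ker\mu^1_w=r$ forces $w$ to be decomposable (which the paper cites from Greub and you reprove via the K\"unneth decomposition relative to $\ker\mu^1_w$ --- a worthwhile addition). The difference is in how the case $s\geq 2$ is reduced to $s=1$. The paper argues by induction on $s$, showing that if $w$ is \emph{not} decomposable one can manufacture a nonzero element of $\mathrm{Im}\,\mu^s_w\cap\ker p$ by wedging the inductively given element $\tau\in\mathrm{Im}\,\mu^{s-1}_w\cap\ker p$ with a suitably chosen basis vector $e_k$; this is short but the verification that $e_k\wedge\tau$ stays in $\ker p$ is delicate. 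You instead assume equality of ranks, translate it into $\ker\mu^s_w=\ker(p\circ\mu^s_w)$, and extract from this an $r$-dimensional subspace $G'$ (the kernel of $v\mapsto\pi_{r,1}(w\wedge v)$) annihilating $w$, using that multiplication $\wedge^{r+1}E\otimes\wedge^{s-1}H\to\wedge^{r+s}E$ has trivial left kernel when $r+s\leq\dim H$. Your route avoids the induction entirely and makes the role of the hypothesis $d-2r\geq s$ completely transparent --- it is spent exactly once, on that nondegeneracy statement --- at the cost of being slightly longer; the paper's induction is more compact and produces an explicit witness to the strict inequality. Your remark that the characterization degenerates at the boundary ($s=0$ or $s>d-r$) is also a point the paper passes over in silence.
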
 \begin{proof}  We fix, with the previous notations, a basis $\lbrace e_1, \dots , e_d \rbrace$ of $E$ and the corresponding basis $\lbrace e_I$, $I = i_1 < \dots < i_r \rbrace $ of $\wedge^r E$. Let 
 $e_{I_0}$ $:=$ $e_1 \wedge \dots \wedge e_r$ so that $I_0 = 1 < 2 < \dots < r$. Since $w$ is non zero we can assume that $w = e_{I_0} + \sum_{I \neq I_0} a_I e_I$. Let $W^-, W^+$ be the 
 subspaces of $E$ respectively generated by $\lbrace e_1, \dots , e_r \rbrace$ and $\lbrace e_{r+1}, \dots , e_d \rbrace$. Then we have the direct sum decomposition
 $$
 \wedge^{r+s} E =  E^+ \oplus E^-,
 $$
 where $E^+$ and $E^-$ are defined as follows: 
$$ E^+  = \lbrace e_{I_0} \wedge u, \ u \in \wedge^s W^+ \rbrace \ \rm and \it \ E^-   = \lbrace \sum_{i = 1, \dots , r} e_i \wedge v_i, \  v_i \in \wedge^{r+s-1} E \rbrace. $$ 
Let  $p^+:  \wedge^{r+s} E \to E^+$ be the projection map.
 Since $w = e_{I_0} + \sum_{I \neq I_0} a_I e_I$, the map $$ {(p^+ \circ \mu^s_w)}_{{\vert}_{\wedge^s W^+}} : \wedge^s W^+ \to E^+$$ is just the map $u \to e_{I_0} \wedge u$, in  particular it is an isomorphism. This implies that
 $$
  {\rank} \  \mu^s_w \geq {\rank} \  (p_r \circ \mu^s_w ) = {\dim} \  \wedge^s W^+ = \binom{d-s}r. 
 $$
 Let $w$ be \it decomposable, \rm then there is no restriction to assume $w = e_{I_o}$ and it follows 
${\dim} \  {\rm Im} \ \mu^s_w = \binom{d-r}s$. Now let's assume that $w$ is \it not decomposable. \rm To complete the proof it suffices to show that, in this case, \begin{equation} {\dim}  \  {\rm Im} \ \mu^s_w > \binom {d-s}r. \end{equation} By the above remarks $\mu^s_w$ is injective on $\wedge^s W^+$. Hence the   inequality (5) holds iff \begin{equation}
 \mu^s_w (\wedge^s W^+) \neq {\rm  Im} \ \mu^s_w.
 \end{equation}
 On the other hand $p^+ \circ \mu^s_w: \wedge^r W^+ \to E^+$ is an isomorphism and ${\dim} \ \wedge^s W^+ = \binom{d-r}s$. Therefore inequality (6) is satisfied iff there exists a vector $\tau \in \wedge^{r+s}E$ such that
\begin{equation}
0 \neq \tau  \in {\rm Im} \ \mu^s_w \cap {\Ker} \ p^+.
\end{equation}
 So, to  complete the proof, it remains to show  the following: \medskip \par \noindent
\underline {Claim} \it Let $d - 2r \geq s$ and $w$ be not decomposable. Then there exists a vector $\tau$ as above. \rm \medskip \par \noindent
\underline {Proof} By induction on $s$. If $s = 1$  we have ${\dim} \ {\rm Im} \ \mu^1_w \geq d - r$. It is proved in  \cite{key6bis} Prop. 6.27,  that  the strict inequality holds iff $w$ is not decomposable.
Hence we have ${\dim}  \ {\rm Im} \ {\mu}_{w}^1  > d - r$ and there exists a non zero $\tau \in {\rm Im} \ \mu^1_w \cap {\Ker} \ p^+.$
 \par\noindent
Now assume  that  ${\tau} \in \ {\rm  Im} \  {\mu}_{w}^{s-1}$ is a non zero vector satisfying the induction hypothesis. Let $N = \lbrace v \in E \ \mid \ \tau \wedge v = 0 \rbrace$. Then 
$N$ is the Kernel of the map $\mu^1_{\tau}: E \to \wedge^{r+s-1} E$ and, by the first part of the proof, $\rm dim \ \it N \leq r + s - 1$.  Since we are assuming $s + r \leq d -r$, it follows that we can find  a vector $e_k \in \lbrace e_1, \dots , e_d \rbrace $ such that \begin{equation}
e_k \wedge e_1 \wedge \dots \wedge e_r \neq 0 \ \rm and \it \ e_k \not\in N.
\end{equation} 
Then for such a vector we have $$
 0 \not=  e_k \wedge \tau = \sum b_{J} e_k \wedge e_J, \quad \vert J \vert = r+s-1, \quad I_0  \not\subset  \{ J \cup k  \}
 $$
and, moreover, $e_k \wedge \tau \in {\rm Im} \mu_{w}^s$. Hence the claim follows. 
\end{proof}
\medskip \par \noindent
From now on we will assume $m \geq 3$. Moreover  we identify $\wedge^r V$ to its image via the diagonal embedding
$$
\delta: \wedge^r V \to (\wedge^r V)^m,
$$
sending $w$ to $\delta(w) := (w,  \dots, w)$.  Let $o \in \Delta$ be the point defined by $w = (w, \dots, w)$. From  Corollary 2.7 (i), we have  that 
$$
\Delta \cap {\Sing}_{m-1}(\mathbb D_{r,m})  \ = \  \lbrace o \in \Delta \  \vert \  w \wedge w = 0 \rbrace.
$$
Moreover let $(t_1, \dots, t_m) \in (\wedge^r V)^m$, and let $v$ be a tangent vector at $o$ to 
$$  
\lbrace (w+\epsilon \ t_1, \dots, w + \epsilon \ t_m), \ \epsilon \in \mathbf C \rbrace \subset \mathbb P_1 \times \dots \times \mathbb P_m,
$$
 it follows from Corollary 2.7  that  $v$ is tangent to ${\Sing}_{m-1}(\mathbb D_{r,m})$ at $o$ iff 
$$
w \wedge t_j + t_i \wedge w = 0, \  \ 1 \leq i < j \leq m,
$$
in the vector space $\wedge^{2r} V$. 
Let
$$
\vartheta : (\wedge^r V)^m \to (\wedge^r V /  \langle \ w \ \rangle )^m
$$
be the natural quotient map, where   $(\wedge^r V /  \langle \ w \ \rangle )^m = T_{_{ \mathbb P_1 \times \dots \times \mathbb P_m, o}}$.
Consider
$$
T_{ o} := \lbrace (t_1, \dots, t_m)  \in (\wedge^r V)^m \  \mid  \  w \wedge  t_j + t_i \wedge w = 0, \  \ 1 \leq i < j \leq m \rbrace
$$
and note that, by the latter remark, one has
$$
{\vartheta}^{-1}( T_{_{{\Sing}_{m-1}(\mathbb D_{r,m}),o}}) = T_{o}.
$$
For any point $o \in \Delta \cap {\Sing}_{m-1}(\mathbb D_{r,m})$ we define
\begin{equation}
c_o = \ {\rm codimension \ of }  \ T_{_{{\Sing}_{m-1}(\mathbb D_{r,m}), o}} \  {\rm in}  \  T_{_{\mathbb P_1 \times \dots \times \mathbb P_m,o}}, 
\end{equation} \par \noindent
Since $\vartheta$ is surjective, it is clear that $c_o$ is the codimension of $T_{ o}$ in $(\wedge^r V)^m$.
\begin{lemma} Let $c_o$ be as above and let $B := \binom {(m-1)r}r$, then \begin{enumerate}
\item{} $c_o \geq mB$ if $r$ is even and $m \geq 3$,
\item{} $c_o \geq (m-1)B$ if $r$ is odd and $m \geq 3$,
\item{} $c_o = m-1$  if $m \leq 2$.
\end{enumerate}
 Moreover the equality holds in (i) and (ii)  iff $w$ is a decomposable vector. 
 \end{lemma}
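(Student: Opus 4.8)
The plan is to reduce the computation of $c_o$ to the rank of the wedge‑multiplication map
$$ \mu^r_w \colon \wedge^r V \longrightarrow \wedge^{2r} V, \qquad t \mapsto w \wedge t, $$
and then to invoke Proposition 3.2 with $E = V$, $d = rm$ and, in the notation of that proposition, $s = r$. Note that its hypothesis $d - 2r \geq s$ becomes $rm - 2r \geq r$, i.e. $m \geq 3$, which is exactly the range in which (i) and (ii) are asserted. Since $\vartheta$ is surjective and $\langle w \rangle^m \subseteq T_o$ (because $w \wedge w = 0$), we already know $c_o = {\rm codim}(T_o \subset (\wedge^r V)^m)$, so everything comes down to describing $T_o$ explicitly.

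First I would rewrite the defining equations of $T_o$. As $w$ and each $t_i$ have degree $r$, one has $t_i \wedge w = (-1)^r\, w \wedge t_i$, so the system $w \wedge t_j + t_i \wedge w = 0$ for $1 \leq i < j \leq m$ is equivalent to
$$ w \wedge t_j + (-1)^r\, w \wedge t_i = 0, \qquad 1 \leq i < j \leq m, $$
inside $\wedge^{2r} V$. If $r$ is even and $m \geq 3$, this reads $w \wedge(t_i + t_j) = 0$ for all $i < j$; comparing the three relations coming from any triple of indices and using that we work over $\mathbf{C}$, one sees the system is equivalent to $w \wedge t_i = 0$ for every $i$. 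Hence $T_o = (\Ker \mu^r_w)^m$ and $c_o = m \cdot \rank \mu^r_w$. By Proposition 3.2, $\rank \mu^r_w \geq \binom{(m-1)r}{r} = B$, with equality precisely when $w$ is decomposable, and multiplying by $m$ gives (i) together with its equality clause. If $r$ is odd and $m \geq 3$, the equations become $w \wedge t_i = w \wedge t_j$ for all $i<j$, equivalently $w \wedge t_i = w \wedge t_1$ for $i = 2, \dots, m$; so $T_o$ is the kernel of the linear map $(t_1,\dots,t_m)\mapsto (w\wedge t_i - w\wedge t_1)_{i = 2, \dots, m}$, whose image is all of $({\rm Im}\,\mu^r_w)^{m-1}$ (take $t_1 = 0$). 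Hence $c_o = (m-1)\rank \mu^r_w$, and Proposition 3.2 yields (ii), again with the decomposability characterization of equality.

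For $m \leq 2$ there is nothing deep. When $m = 1$ the system is empty, so $c_o = 0 = m-1$. When $m = 2$ we have $\dim V = 2r$, so $\wedge^{2r}V$ is one–dimensional, and the single relation $w \wedge t_2 + (-1)^r\, w \wedge t_1 = 0$ is a linear functional on $(\wedge^r V)^2$; it is not identically zero because the pairing $\wedge^r V \times \wedge^r V \to \wedge^{2r}V$ is perfect and $w \neq 0$, so its kernel has codimension $1 = m-1$.

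The main obstacle I anticipate is the linear algebra in the even case — squeezing the full system $w \wedge (t_i + t_j) = 0$ down to $w \wedge t_i = 0$ for all $i$ — together with verifying in both parity cases that one obtains the \emph{exact} codimension of $T_o$ (via surjectivity onto a power of ${\rm Im}\,\mu^r_w$) rather than merely an inequality, so that the equality statement of Proposition 3.2 transfers cleanly to $c_o$. The case $m = 2$ has to be argued by hand, since Proposition 3.2 does not apply when $d - 2r < s$; the outcome is consistent with the formulas because $B = \binom{r}{r} = 1$ in that range.
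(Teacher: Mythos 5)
Your proof is correct and follows essentially the same route as the paper: both reduce the computation of $c_o$ to the rank of $\mu^r_w$ via an explicit description of $T_o$ (the paper writes $T_o = N + (w^{\perp})^m$, which is your case analysis repackaged) and then invoke Proposition 3.2 with $E = V$, $d = rm$, $s = r$. If anything you are slightly more careful at $m = 2$, where the paper merely says ``arguing as above'' although the hypothesis $d - 2r \geq s$ of Proposition 3.2 fails there; your direct perfect-pairing argument fills that in.
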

 \begin{proof} Let $w^{\perp} \subset \wedge^r V$ be the orthogonal space of $w = (w, \dots, w)$ with respect to the bilinear form
 $$
 \wedge: \wedge^r V \times \wedge^r V \to \wedge^{2r} V.
 $$
 Moreover let $N \subset (\wedge^r V)^m$ be the subspace defined by the equations $$(-1)^r t_i + t_j = 0, \ 1 \leq i < j \leq m. $$ It is easy
 to check that
 $$
 T_{ o} = N + (w^{\perp})^m.
 $$
 Let $m \geq 3$ then $N$ is the diagonal subspace  if $r$ is odd and $N = (0)$ if $r$ is even. By Proposition 3.2, we have that ${\codim} \  w^{\perp}\  \geq B$ and moreover  the equality holds iff $w$ is a decomposable vector. This implies (i), (ii) and the latter
 statement. Let $m \leq 2$ then $N$ is either the diagonal subspace or the space of pairs $(t,-t), \ t \in \wedge^r V$. Arguing as above it follows that $c_o = (m-1)B$, i.e. $c_o = m-1$. This completes the proof.  \end{proof}
  \par \noindent
{\em \it Proof of Theorem 3.1} \par \noindent
The proof is now immediate: let $o \in \Delta \cap {\Sing}_{m-1}(\mathbb D_{r,m})$. It is obvious that the codimension $c_o$ is minimal
iff $dim \ T_{{\Sing}_{m-1}(\mathbb D_{r,m}),o}$ is maximal. Assume $m \geq 3$, by Lemma  3.3 $c_o$ is minimal iff $o \in \mathbb G$.  $\qed$ \medskip \par \noindent
Keeping our usual notations we have
$$
 {\mathbb G}^m \subset  \mathbb P_1 \times \dots \times \mathbb P_m \subset {\mathbb P}^{(N+1)^m -1},
 $$
 where the latter inclusion is the Segre embedding and $\mathbb G$ is the previous Pl\"ucker embedding. The restriction of $\mathbb D_{r,m}$ to ${\mathbb G}^m$  has a geometric interpretation given in the next lemma.   \par \noindent Let $o = (w_1, \dots, w_m) \in \mathbb G^m$. Then  we have $ w_s := v^{(s)}_1 \wedge \dots \wedge v^{(s)}_r ,$ where $v^{(s)}_1, \dots  , v^{(s)}_r \in V_s$ and  $s = 1, \dots , m$.  In particular $w_s$ is a decomposable vector, so it defines a point $l_s$ in $\mathbb G$. The vector space corresponding to $l_s$ is generated by the basis $v^{s}_{1}, \dots , v^{s}_{r}$. We will denote its projectivization by $ L_{s}$. 
\begin{lemma} The following conditions are equivalent: \begin{enumerate}
\item{} $o \in \mathbb D_{r,m}$,
\item{} $w_1 \wedge \dots \wedge w_m = 0$,
\item{}  $\{v_{i}^j \}$, $1 \leq i \leq r$, $1 \leq j \leq m$, is not a basis of $V$,
\item{} there exists a hyperplane in ${\mathbb P}(V)$ containing $L_1\cup \dots \cup L_m$.
\end{enumerate}
\end{lemma}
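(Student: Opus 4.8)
The plan is to establish the cycle of equivalences $(1)\Leftrightarrow(2)\Leftrightarrow(3)\Leftrightarrow(4)$, each step being a short piece of multilinear algebra. The equivalence $(1)\Leftrightarrow(2)$ is immediate from the definitions: by construction of $\mathbb D_{r,m}$ one has $o\in\mathbb D_{r,m}$ precisely when $d_{r,m}(w_1,\dots,w_m)=0$, and $d_{r,m}(w_1,\dots,w_m)=w_1\wedge\dots\wedge w_m$ by the very definition of the form.

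For $(2)\Leftrightarrow(3)$ I would substitute the decompositions $w_s=v^{(s)}_1\wedge\dots\wedge v^{(s)}_r$ and use associativity of $\wedge$ to write
$$
w_1\wedge\dots\wedge w_m \;=\; v^{(1)}_1\wedge\dots\wedge v^{(1)}_r\wedge v^{(2)}_1\wedge\dots\wedge v^{(m)}_{r}\ \in\ \wedge^{rm}V .
$$
This is a wedge of exactly $rm$ vectors of $V$ lying in the one-dimensional space $\wedge^{rm}V$, so it vanishes if and only if the $rm$ vectors $\{v^{(s)}_i\}$ are linearly dependent; and since there are precisely $rm=\dim V$ of them, linear dependence is the same as failing to be a basis, i.e.\ condition $(3)$. (The $v^{(s)}_i$ are determined by $l_s$ only up to a change of basis of the underlying $r$-plane, but all four conditions are visibly unaffected by such a choice, and each $v^{(s)}_i$ is automatically nonzero because $w_s\neq 0$.)

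For $(3)\Leftrightarrow(4)$ I would observe that $\{v^{(s)}_i\}$ fails to be a basis iff the subspace $U$ it spans is a proper subspace of $V$, hence is contained in some hyperplane $H\subsetneq V$; conversely any hyperplane containing all the $v^{(s)}_i$ forces $U\neq V$. Finally $U\subseteq H$ is equivalent to $\mathbb P(H)$ containing every point $[v^{(s)}_i]$, and since $L_s=\mathbb P\big(\langle v^{(s)}_1,\dots,v^{(s)}_r\rangle\big)$ is spanned by those points, this says exactly that $\mathbb P(H)\supseteq L_1\cup\dots\cup L_m$.

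I do not anticipate a genuine obstacle: every implication is a one-line consequence of standard facts about the exterior algebra and about spanning sets. The only thing deserving a moment's care is the bookkeeping between a spanning family of vectors in $V$, the corresponding points of $\mathbb P(V)$, and the linear subspaces $L_s$, together with the numerical coincidence — used once, in $(2)\Leftrightarrow(3)$ — that $rm$ vectors in an $rm$-dimensional space are linearly independent exactly when they form a basis.
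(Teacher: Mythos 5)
Your proof is correct, and it simply fills in the routine multilinear-algebra details that the paper dismisses with the single word ``Immediate'': the chain $(1)\Leftrightarrow(2)\Leftrightarrow(3)\Leftrightarrow(4)$ via the definition of $d_{r,m}$, the vanishing criterion for a wedge of $rm$ vectors in the $rm$-dimensional space $V$, and the hyperplane reformulation is exactly the intended argument. Nothing is missing.
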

\begin{proof} Immediate. \end{proof}
 
 \par \noindent
\begin{lemma} ${\mathbb D}_{r,m} $ cuts on ${\mathbb G}^m$ an integral hyperplane section.
\end{lemma}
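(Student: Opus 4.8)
\emph{Proof proposal.} The plan is to show separately that the effective divisor $\mathbb{D}_{r,m}\cdot\mathbb{G}^m$ on the smooth variety $\mathbb{G}^m$ is irreducible and that it is generically reduced; integrality then follows, since a generically reduced Cartier divisor on a smooth variety is reduced (it is Cohen--Macaulay, hence satisfies $S_1$; or: an irreducible divisor with a smooth point has generic multiplicity one). Throughout I use the dictionary of Lemma 3.5: writing a point of $\mathbb{G}^m$ as an $m$-tuple $(L_1,\dots,L_m)$ of $r$-dimensional subspaces of $V$, it lies on $\mathbb{D}_{r,m}$ exactly when $\langle L_1,\dots,L_m\rangle$ is a proper subspace of $V$, i.e. when $L_1\cup\dots\cup L_m$ lies in a hyperplane of $\mathbb{P}(V)$. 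First note $\mathbb{D}_{r,m}\cdot\mathbb{G}^m$ is a genuine nonzero divisor: for $L_i=\langle e_{(i-1)r+1},\dots,e_{ir}\rangle$ one has $w_1\wedge\dots\wedge w_m=\pm\,e_1\wedge\dots\wedge e_{rm}\neq 0$, so $\mathbb{G}^m\not\subset\mathbb{D}_{r,m}$; and the locus is nonempty by the point exhibited below.

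\emph{Irreducibility.} I would introduce the incidence variety
\[
Z:=\{\,((L_1,\dots,L_m),H)\in \mathbb{G}^m\times\mathbb{P}(V^*)\ \mid\ L_i\subset H\ \text{for all }i\,\}.
\]
The projection $Z\to\mathbb{P}(V^*)$ realises $Z$ as a product of Grassmann bundles over $\mathbb{P}(V^*)$, with fibre $G(r,rm-1)^m$; since base and fibre are irreducible, $Z$ is irreducible. As $Z$ is a closed subvariety of a product of projective varieties it is complete, so its image under the first projection is a closed irreducible subset of $\mathbb{G}^m$, and by Lemma 3.5(iv) this image is exactly the support of $\mathbb{D}_{r,m}\cdot\mathbb{G}^m$. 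Hence $\mathbb{D}_{r,m}\cap\mathbb{G}^m$ is irreducible; being a nonempty proper closed subset cut out by a single section of $\mathcal{O}_{\mathbb{G}^m}(1,\dots,1)$, it is a prime divisor once we know it is reduced.

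\emph{Reducedness.} Since $\mathbb{G}^m$ is smooth and the divisor is irreducible, it is enough to exhibit one smooth point, i.e. a point $o\in\mathbb{G}^m\cap\mathbb{D}_{r,m}$ at which the differential of the restricted section $d_{r,m}|_{\mathbb{G}^m}$ (well defined there because $d_{r,m}(o)=0$) is nonzero. Take $L_i=\langle e_{(i-1)r+1},\dots,e_{ir}\rangle$ for $1\le i\le m-1$ and $L_m=\langle e_{(m-1)r+1},\dots,e_{rm-1},e_1\rangle$, so that $\langle L_1,\dots,L_m\rangle=\langle e_1,\dots,e_{rm-1}\rangle$ is a hyperplane and $o=(L_1,\dots,L_m)\in\mathbb{D}_{r,m}$ by Lemma 3.5. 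Consider the arc in $\mathbb{G}^m$ with $L_1(\epsilon)=\langle e_1+\epsilon\, e_{rm},e_2,\dots,e_r\rangle$ and $L_i(\epsilon)=L_i$ for $i\ge 2$; then $\dot w_1=e_{rm}\wedge e_2\wedge\dots\wedge e_r$ and $\dot w_i=0$ for $i\ge 2$. By the Taylor expansion recorded before Lemma 2.4 and formula (3) (equivalently, by Leibniz for the multilinear map $d_{r,m}$), the coefficient of $\epsilon$ in $d_{r,m}(w_1(\epsilon),\dots,w_m(\epsilon))$ is, up to sign, $w_2\wedge\dots\wedge w_m\wedge\dot w_1$. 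Here $w_2\wedge\dots\wedge w_m=\pm\, e_1\wedge e_{r+1}\wedge e_{r+2}\wedge\dots\wedge e_{rm-1}$, so wedging with $\dot w_1$ yields $\pm\, e_1\wedge\dots\wedge e_{rm}\neq 0$. Hence $d(d_{r,m}|_{\mathbb{G}^m})_o\neq 0$, so $o$ is a smooth point of $\mathbb{D}_{r,m}\cap\mathbb{G}^m$; the divisor is therefore generically reduced, hence reduced, and combined with irreducibility this proves it is integral.

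\emph{Main obstacle.} The irreducibility is soft, being carried entirely by the incidence variety $Z$. The only slightly delicate point is the reducedness: one must identify the differential of $d_{r,m}|_{\mathbb{G}^m}$ at $o$ with the first polar $\partial^{m-1}_w d_{r,m}$ restricted to the tangent directions of $\mathbb{G}^m$ — which are, at a decomposable $w_s=v^{(s)}_1\wedge\dots\wedge v^{(s)}_r$, spanned by the $v^{(s)}_1\wedge\dots\wedge\dot v^{(s)}_j\wedge\dots\wedge v^{(s)}_r$ — and then choose the configuration $(L_1,\dots,L_m)$ on a hyperplane concretely enough that the resulting top exterior product is visibly nonzero, as above.
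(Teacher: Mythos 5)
Your proof is correct, and the irreducibility half is exactly the paper's argument: the same incidence correspondence $Z\subset\mathbb{G}^m\times\mathbb{P}(V^*)$, fibred over $\mathbb{P}(V^*)$ with irreducible fibres $G(r,H)^m$, whose image under the first projection is the support of $\mathbb{D}_{r,m}\cap\mathbb{G}^m$ by Lemma 3.4(iv). Where you diverge is the reducedness step. The paper disposes of it in one line of a ``soft'' nature: since $\pic(\mathbb{G}^m)\cong\mathbb{Z}^m$ and the class $\mathcal{O}_{\mathbb{G}^m}(1,\dots,1)$ is primitive (not divisible), an irreducible divisor in that linear system cannot be a multiple $kD'$ with $k\geq 2$, hence is reduced. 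You instead exhibit an explicit point $o=(L_1,\dots,L_m)$ spanning a hyperplane and an arc in $\mathbb{G}^m$ along which the derivative of $d_{r,m}$ is $\pm\,e_1\wedge\dots\wedge e_{rm}\neq 0$; your index bookkeeping checks out (each of $1,\dots,rm$ occurs exactly once in $\dot w_1\wedge w_2\wedge\dots\wedge w_m$), so $o$ is a smooth point of the divisor and generic reducedness follows. The paper's route is shorter and needs no computation, but it relies on knowing $\pic(\mathbb{G}^m)$ and the primitivity of the class; your route is more elementary and self-contained, and it yields the slightly stronger conclusion that $\mathbb{D}_{r,m}\cdot\mathbb{G}^m$ is generically smooth, not merely reduced. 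Both are complete proofs.
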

\begin{proof} Consider the correspondence
$$ I = \{ (l_1, \dots, l_m,H) \in {\mathbb G}^m \times {\mathbb P}(V^*) \  \vert \ L_1\cup \dots \cup L_m \subset H \},$$ and its  projections  $p_1 \colon I \to {\mathbb G}^m$ and $p_2 \colon I \to {\mathbb P}(V^*) $. Note that  the fibre of $p_2$ at any $H$ is the product of Grassmannians of $r-1$ spaces in $H$, which is  irreducible. Hence $I$ is irreducible. On the other hand we have
$ p_1(I) ={\mathbb D}_{r,m} \cap {\mathbb G}^m$ by Lemma 3.4 (iv). Hence the latter intersection is  irreducible. Since $O_{{\mathbb G}^m}(1)$ is not divisible in ${\pic}({\mathbb G}^m)$, it follows that $\mathbb D_{r,m} \cdot \mathbb G^m$ is integral.
\end{proof} \par \noindent
On $\mathbb G$ we consider the universal bundle $\mathcal U_r$. We recall that \it $\mathcal U_r$ is uniquely defined by its Chern classes, unless $m = 2$. \rm Let  $l \in \mathbb G$ and let  $L \subset {\mathbb P}(V)$ be the space corresponding to $l$. Then the fibre of $\mathcal U^*_r$ at $l$ is $H^0(\mathcal O_L(1))$,  moreover 
$ H^0({\mathcal U_{r}}^*)= V^* = H^0(O_{{\mathbb P}(V)}(1)) $. Let $\pi_s: {\mathbb G}^m \to {\mathbb G}$ be  the projection onto the $s$-th factor. On $\mathbb G^m$ we consider the vector bundle of rank $rm$
$$
{\mathcal F} \colon = \bigoplus_{s = 1, \dots , m} \pi_s^*  {\mathcal U}^*_r.
$$
For any point $o = (l_1, \dots, l_m) \in \mathbb G^m$, we have
$$
 {\mathcal F}_o = {({\mathcal U}^*_{r})}_{l_1} \oplus.. \oplus {({\mathcal U}^*_{r})}_{l_m} = H^0(O_{L_1}(1)) \oplus ... \oplus H^0(O_{L_m}(1)).
$$
In particular the natural evaluation map
\begin{equation}
{ev}^m: V^* \otimes \mathcal O_{{\mathbb G}^m } \to {\mathcal F},
\end{equation}
is a morphism of vector bundles of the same rank $rm$. 
\begin{definition} $\mathbb D_{\mathbb G}$ is the degeneracy locus of ${ev}^m$. \end{definition} \rm \par \noindent
\begin{theorem} $\mathbb D_{\mathbb G} = \mathbb D_{r,m} \cdot {\mathbb G}^m.$
\end{theorem}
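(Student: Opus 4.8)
The plan is to show that $\mathbb{D}_{\mathbb{G}}$ and $\mathbb{D}_{r,m}\cdot\mathbb{G}^m$ are effective divisors in one and the same linear system on $\mathbb{G}^m$, that they have the same support, and then to upgrade this to an equality of divisors using the integrality of $\mathbb{D}_{r,m}\cdot\mathbb{G}^m$ established in the previous lemma.

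First I would pin down the line bundle carrying $\mathbb{D}_{\mathbb{G}}$. The map $ev^m$ is a morphism between the rank $rm$ bundles $V^*\otimes\mathcal{O}_{\mathbb{G}^m}$ and $\mathcal{F}$, so $\det(ev^m)$ is a section of $\det\mathcal{F}\otimes(\det(V^*\otimes\mathcal{O}_{\mathbb{G}^m}))^{-1}$. Recalling that for the Pl\"ucker embedding $\mathcal{O}_{\mathbb{G}}(1)\cong\det\mathcal{U}_r^*$ (the fibre of $\det\mathcal{U}_r^*$ at $l$ being $\wedge^r H^0(\mathcal{O}_L(1))=(\wedge^r W)^*$), we get $\det\mathcal{F}=\bigotimes_{s=1}^m\pi_s^*\det\mathcal{U}_r^*=\mathcal{O}_{\mathbb{G}^m}(1,\dots,1)$, while $\det(V^*\otimes\mathcal{O}_{\mathbb{G}^m})=(\wedge^{rm}V^*)\otimes\mathcal{O}_{\mathbb{G}^m}$ is trivial. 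Hence, after fixing a volume form on $V$, $\det(ev^m)\in H^0(\mathcal{O}_{\mathbb{G}^m}(1,\dots,1))$ and $\mathbb{D}_{\mathbb{G}}$ lies in $|\mathcal{O}_{\mathbb{G}^m}(1,\dots,1)|$, the same linear system containing $\mathbb{D}_{r,m}\cdot\mathbb{G}^m$.

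Next I would compute $\mathbb{D}_{\mathbb{G}}$ fibrewise. At a point $o=(l_1,\dots,l_m)\in\mathbb{G}^m$ with associated linear spaces $L_1,\dots,L_m\subset\mathbb{P}(V)$, the stalk $(ev^m)_o\colon V^*=H^0(\mathcal{O}_{\mathbb{P}(V)}(1))\to\bigoplus_{s}H^0(\mathcal{O}_{L_s}(1))$ is the total restriction of linear forms, so its kernel is the space of hyperplanes of $\mathbb{P}(V)$ containing $L_1\cup\dots\cup L_m$. As source and target have equal dimension $rm$, one has $o\in\mathbb{D}_{\mathbb{G}}$ if and only if such a hyperplane exists, which by the lemma listing the equivalent conditions for a point of $\mathbb{G}^m$ to lie in $\mathbb{D}_{r,m}$ (the equivalence of ``$o\in\mathbb{D}_{r,m}$'' with ``there is a hyperplane containing $L_1\cup\dots\cup L_m$'') means precisely $o\in\mathbb{D}_{r,m}$. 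Therefore $\Supp(\mathbb{D}_{\mathbb{G}})=\Supp(\mathbb{D}_{r,m}\cdot\mathbb{G}^m)$; in particular, since the right-hand side is a proper subvariety, $\det(ev^m)\not\equiv0$ and $\mathbb{D}_{\mathbb{G}}$ is a genuine effective divisor.

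Finally I would conclude. By the previous lemma $\mathbb{D}_{r,m}\cdot\mathbb{G}^m$ is integral, so $\Supp(\mathbb{D}_{\mathbb{G}})$ is a prime divisor $P$ and hence $\mathbb{D}_{\mathbb{G}}=kP$ as divisors for some integer $k\geq1$. Comparing classes in $\pic(\mathbb{G}^m)\cong\mathbb{Z}^m$ gives $\mathcal{O}_{\mathbb{G}^m}(1,\dots,1)=\mathcal{O}_{\mathbb{G}^m}(k,\dots,k)$, which forces $k=1$, so $\mathbb{D}_{\mathbb{G}}=\mathbb{D}_{r,m}\cdot\mathbb{G}^m$. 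The only step that is not pure bookkeeping is the fibrewise identification of $ev^m$ with restriction of linear forms together with the hyperplane criterion; a heavier but more direct alternative would be to trivialize $\mathcal{F}$ near a point and check that $\det(ev^m)$ agrees, up to a nowhere-vanishing factor, with $q^*d_{r,m}$ restricted to $\mathbb{G}^m$, via the classical identity expressing the determinant of $V^*\to\bigoplus_s W_s^*$ as $w_1\wedge\dots\wedge w_m$; this would even yield the equality scheme-theoretically, at the cost of carefully tracking signs and volume forms.
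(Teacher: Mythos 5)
Your proposal is correct and follows essentially the same route as the paper: identify $(ev^m)_o$ fibrewise with the restriction map $H^0(\mathcal O_{\mathbb P(V)}(1))\to\bigoplus_s H^0(\mathcal O_{L_s}(1))$, use Lemma 3.4's hyperplane criterion to match supports, observe both divisors lie in $\vert\mathcal O_{\mathbb G^m}(1,\dots,1)\vert$, and invoke the integrality of $\mathbb D_{r,m}\cdot\mathbb G^m$ to conclude. You merely make explicit two steps the paper leaves implicit (the computation of $\det\mathcal F$ and the multiplicity argument forcing $k=1$), which is a welcome amplification rather than a deviation.
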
 
\begin{proof} Let $o = (l_1, \dots,l_m) \in \mathbb G^m$, then $ev^m_o$ is the natural restriction map
$$ 
 H^0(O_{{\mathbb P}(V)}(1)) \to H^0(O_{L_1}(1)) \oplus ... \oplus H^0(O_{L_m}(1)).
$$
Note that $ev^m_o$ is an isomorphism iff $L_1 \cup \dots \cup L_m$ is not  in a hyperplane of ${\mathbb P}(V)$. This implies  that $\mathbb D_{\mathbb G}$ is a divisor. Moreover $\mathbb D_{\mathbb G} = \mathbb D_{r,m} \cap \mathbb G^m$ by Lemma 3.4 and $ {\mathbb D}_{\mathbb G} \in \vert O_{{\mathbb G}^m}(1,..,1) \vert.$ Hence $\mathbb D_{\mathbb G} = \mathbb D_{r,m} \cdot {\mathbb G}^m.$ \end{proof} 
  \section{Pl\"ucker forms and moduli of vector bundles}
\noindent In this section we consider any integral, smooth projective variety $X \subset \mathbb P^n$
of dimension $d \geq 1$. We assume that $X$ is linearly normal and not degenerate. 
\begin{definition}  $(E,S)$ is a good pair on $X$ if
 \begin{enumerate} \it
\item{}  $E$ is a vector bundle of rank $r$ on $X$,
\item{} ${\det} \ E  \  \cong \mathcal O_X(1)$, 
\item{} $S \subset H^0(E)$ is a subspace of dimension $rm$, 
\item{}  $E$ is globally generated by $S$,
\item{} the classifying map of $(E,S)$ is a morphism birational onto its image.
\end{enumerate}\end{definition}  \par \noindent
Given $(E,S)$ we have the dual space $V := S^*$ and its Pl\"ucker form
$$
\mathbb D_{r,m} \subset \mathbb P(\wedge^r V)^m.
$$
We want to use it. Let us fix preliminarily some further notations: \begin{definition} \ \begin{enumerate} 
\item $\mathbb G_{E,S}$ is the Pl\"ucker embedding of the Grassmannian $G(r,V)$, 
\item $\mathcal U_{_{E,S}}$ is the universal bundle of $\mathbb G_{E,S}$, 
\item $d_{_{E,S}}: \wedge^r S \to H^0(\mathcal O_X(1))$ is the standard determinant map, 
\item $\lambda_{_{_{E,S}}}: \mathbb P^n \to {\mathbb P}({\wedge}^r V)$ is the projectivized dual of $d_{_{E,S}}$\ , 
\item $g_{_{E,S}}: X \to \mathbb G_{E,S}$ is the classifying map defined by $S$.
\end{enumerate}
\end{definition} \noindent
We recall that $g_{_{E,S} }$ associates to $x \in X$  the parameter point of the space ${\bf Im} \ ev_x^*$, where $ev: S \otimes \mathcal O_X \to E$ is the evaluation map. It
is well known that $g_{_{E,S}}$ is defined by the subspace ${\rm Im} \ d_{E,S}$ of $H^0(\mathcal O_X(1))$, in particular
$$
g_{_{E,S}} = {\lambda_{_{E,S}}}_{{\vert}_{\    X}}.
$$
Since $E$ is globally generated by $S$ and $g_{E,S}$ is  a birational morphism, the next three  lemmas describe standard properties.
\begin{lemma} One has $E \cong \lambda_{_{_{E,S}}}^* \mathcal U_{_{_{E,S}}}^*$ and $S = \lambda_{_{_{E,S}}}^* H^0(\mathcal U^*_{_{_{E,S}}})$ for
any good pair $(E,S)$. 
\end{lemma}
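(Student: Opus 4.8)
The plan is to verify the isomorphism $E \cong \lambda_{E,S}^* \mathcal U_{E,S}^*$ directly from the universal property of the classifying map, and then to deduce the statement about sections by pushing forward. First I would recall that the classifying map $g_{E,S} : X \to \mathbb G_{E,S}$ is built from the evaluation morphism $ev : S \otimes \mathcal O_X \to E$: for each $x \in X$, global generation by $S$ makes $ev_x : S \to E_x$ surjective, so its dual $ev_x^* : E_x^* \hookrightarrow S^* = V$ is an injection of an $r$-dimensional subspace, and $g_{E,S}(x)$ is by definition the point of $G(r,V)$ corresponding to $\mathrm{Im}\, ev_x^* \subset V$. By the defining property of the universal subbundle, the fibre of $\mathcal U_{E,S}$ over a point $l \in \mathbb G_{E,S}$ is exactly the $r$-dimensional subspace of $V$ it parametrises; hence $(g_{E,S}^* \mathcal U_{E,S})_x = \mathrm{Im}\, ev_x^* = \mathrm{Im}\, ev_x^*$, i.e. $g_{E,S}^* \mathcal U_{E,S} \cong E^*$ as a subbundle of $V \otimes \mathcal O_X$. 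Dualising gives $g_{E,S}^* \mathcal U_{E,S}^* \cong E$. Since $g_{E,S} = {\lambda_{E,S}}|_X$, this is the same as $\lambda_{E,S}^* \mathcal U_{E,S}^* \cong E$, which is the first assertion.

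Second, for the statement about sections I would use the fact that $\mathcal U_{E,S}^*$ on $\mathbb G_{E,S}$ is globally generated with $H^0(\mathcal U_{E,S}^*) = V^* = S$ — this is the standard identification $H^0(\mathcal U_r^*) = H^0(\mathcal O_{\mathbb P(V)}(1))$ recorded just before Definition 3.7, applied with $V$ replaced by the present $V = S^*$, so $H^0(\mathcal U_{E,S}^*) = (S^*)^* = S$. Pulling back sections gives a linear map $\lambda_{E,S}^* : H^0(\mathcal U_{E,S}^*) \to H^0(\lambda_{E,S}^* \mathcal U_{E,S}^*) = H^0(E)$, and under the identification of the source with $S$ this is exactly the inclusion $S \hookrightarrow H^0(E)$ from which the evaluation map $ev$ was built, because the tautological sections of $\mathcal U_{E,S}^*$ restrict on each fibre to the coordinate functionals that reconstruct $ev_x$. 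Therefore $\lambda_{E,S}^* H^0(\mathcal U_{E,S}^*) = S$ inside $H^0(E)$, which is the second assertion.

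The only genuine subtlety — and the step I would be most careful about — is the normalisation of the universal bundle when $m = 2$ (equivalently $rm = 2r$, the ambiguous case flagged in the remark before Definition 3.7 and again in the sentence "$\mathcal U_r$ is uniquely defined by its Chern classes, unless $m = 2$"). Here one must be sure that "the universal bundle" of $\mathbb G_{E,S}$ means the tautological subbundle of $V \otimes \mathcal O_{\mathbb G}$, i.e. the one whose fibre over $l$ is the subspace $l$ itself, not some twist; with that convention the fibrewise computation above is forced and no Chern-class ambiguity enters. Beyond that, everything is a matter of matching the universal property of $G(r,V)$ with the construction of $g_{E,S}$ from $ev$, so I would present the proof as a short unwinding of definitions, citing the already-stated identities $g_{E,S} = \lambda_{E,S}|_X$ and $H^0(\mathcal U_r^*) = V^*$ rather than reproving them.
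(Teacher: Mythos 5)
Your argument is correct and is precisely the standard unwinding of the universal property of $G(r,V)$ that the paper has in mind: the paper states this lemma without proof, remarking only that it is a ``standard property'' following from global generation by $S$ and the identity $g_{E,S}=\lambda_{E,S}|_X$, and your fibrewise identification $(g_{E,S}^*\,\mathcal U_{E,S})_x=\mathrm{Im}\,ev_x^*$ together with $H^0(\mathcal U_{E,S}^*)=V^*=S$ is exactly that argument made explicit. Your care about the normalisation of $\mathcal U_{E,S}$ as the tautological subbundle (relevant only when $m=2$) is consistent with the paper's convention that the fibre of $\mathcal U_r^*$ at $l$ is $H^0(\mathcal O_L(1))$.
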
  \noindent
We say that the good pairs $(E_1,S_1)$, $(E_2,S_2)$ are  \it isomorphic \rm if there exists an isomorphism $u: E_1 \to E_2$ such that $u^* S_1 = S_2$.  
\begin{lemma} Let $(E_1,S_1)$ and $(E_2,S_2)$ be good pairs. Then the following conditions are equivalent: \begin{enumerate}
 \item{}   $d_{_{_{E_1,S_1}}} = d_{_{E_2, S_2}} \circ (\wedge^r \alpha)$ for some isomorphism $\alpha: S_1 \to S_2$.  
\item{}  $f^*E_1 \cong E_2$ and $f^*S_1 = S_2$ for some automorphism $f \in {\Aut}(X)$.
\end{enumerate} \end{lemma}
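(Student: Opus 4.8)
The plan is to read both conditions off the classifying map and then invoke the preceding lemma (Lemma 4.3), which recovers $E$ and $S$ from $g_{E,S}$. Throughout, condition (i) is understood up to the natural action on the codomain $H^0(\mathcal O_X(1))$ of the group $\Gamma$ of automorphisms of $X$ preserving $\mathcal O_X(1)$; because $X$ is linearly normal and non-degenerate, so that $\mathbb P^n=\mathbb P(H^0(\mathcal O_X(1))^{*})$, an element of $\Gamma$ is exactly a projective automorphism of $\mathbb P^n$ stabilising $X$. Recall also that $\lambda_{E,S}$ is by construction the projectivisation of the transpose of $d_{E,S}$ and that $g_{E,S}$ is the restriction of $\lambda_{E,S}$ to $X$. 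Thus both implications amount to passing between the datum ``$d_{E,S}$, up to an isomorphism $\alpha\colon S_1\to S_2$'' and the datum ``$g_{E,S}$, up to the induced isomorphism of Grassmannians and modulo $\Gamma$''.

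For (i)$\Rightarrow$(ii): write $d_{E_1,S_1}=\psi\circ d_{E_2,S_2}\circ(\wedge^r\alpha)$ with $\psi\in\Gamma$. Dualising and projectivising gives $\lambda_{E_1,S_1}=\Phi\circ\lambda_{E_2,S_2}\circ\bar\psi$, where $\bar\psi$ is the projective automorphism of $\mathbb P^n$ underlying $\psi$ and $\Phi\colon\mathbb P(\wedge^r V_2)\to\mathbb P(\wedge^r V_1)$ is the projective isomorphism induced by $\wedge^r(\alpha^{*})$, with $\alpha^{*}\colon V_2\to V_1$ the transpose of $\alpha$. Since $\Phi$ comes from a linear isomorphism $V_2\cong V_1$, it restricts to an isomorphism $\mathbb G_{E_2,S_2}\xrightarrow{\sim}\mathbb G_{E_1,S_1}$ under which the tautological bundles correspond canonically, $\Phi^{*}\mathcal U_{E_1,S_1}^{*}\cong\mathcal U_{E_2,S_2}^{*}$. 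Restricting $\lambda_{E_1,S_1}=\Phi\circ\lambda_{E_2,S_2}\circ\bar\psi$ to $X$ yields $g_{E_1,S_1}=\Phi\circ g_{E_2,S_2}\circ f$, where $f\in{\Aut}(X)$ is the automorphism of $X$ underlying $\bar\psi$ (note that $\bar\psi$ must stabilise $X$, as otherwise the right-hand side is not a morphism on $X$). Now Lemma 4.3 gives $E_1\cong g_{E_1,S_1}^{*}\mathcal U_{E_1,S_1}^{*}\cong f^{*}g_{E_2,S_2}^{*}\Phi^{*}\mathcal U_{E_1,S_1}^{*}\cong f^{*}g_{E_2,S_2}^{*}\mathcal U_{E_2,S_2}^{*}\cong f^{*}E_2$, and the very same chain of identifications carries the subspace $S_1\subset H^0(E_1)$ onto $f^{*}S_2$. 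This is (ii).

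Conversely, given $f\in{\Aut}(X)$, an isomorphism $u\colon f^{*}E_1\to E_2$ and the equality $u_{*}(f^{*}S_1)=S_2$ of subspaces of $H^0(E_2)$, let $\alpha\colon S_1\xrightarrow{\sim}S_2$ be the restriction of $u_{*}\circ f^{*}$ to $S_1$. The section-wedge $(\omega_1,\dots,\omega_r)\mapsto\omega_1\wedge\dots\wedge\omega_r$ underlying $d_{E,S}$ is functorial under pullback along $f$ (it commutes with $f^{*}$) and under the bundle isomorphism $u$ (it intertwines the two determinant sections via ${\det}\,u$); composing the two functorialities yields $d_{E_2,S_2}\circ(\wedge^r\alpha)=\psi\circ d_{E_1,S_1}$, where $\psi\in\Gamma$ acts on $H^0(\mathcal O_X(1))$ through ${\det}\,u\colon f^{*}{\det}\,E_1\to{\det}\,E_2$ together with the fixed isomorphisms ${\det}\,E_i\cong\mathcal O_X(1)$; this is (i). The step I expect to be the main obstacle is the bookkeeping in (i)$\Rightarrow$(ii): verifying that $\alpha^{*}$ really induces a tautological-bundle-preserving isomorphism of the two Grassmannians, and cleanly isolating the part $f\in{\Aut}(X)$ from the projective-linear ambiguity in the common ambient $\mathbb P^n$ — which is precisely where linear normality and non-degeneracy of $X$ enter, to identify $\Gamma$ with the stabiliser of $X$ in ${\PGL}(n+1)$. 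Granting this, Lemma 4.3 supplies the substantive content (the reconstruction of $(E_i,S_i)$ from $g_{E_i,S_i}$), and the determinant-functoriality statements in the converse direction are routine.
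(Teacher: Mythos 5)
Your proof is correct and follows essentially the same route as the paper: dualize the determinant maps to obtain $\lambda_{E_1,S_1}$ as the composite of $\lambda_{E_2,S_2}$ with the isomorphism of Grassmannians induced by $\wedge^r\alpha^*$, recover the pairs from the classifying maps via Lemma 4.3, and take $\alpha=f^*$ for the converse. Your explicit bookkeeping of the codomain automorphism $\psi\in\Gamma$ is a refinement the paper leaves implicit (its step ``put $\alpha=f^*$'' silently identifies $H^0(\mathcal O_X(1))$ with $H^0(f^*\mathcal O_X(1))$), and this identification is indeed needed for the equivalence to hold with a nontrivial $f$.
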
 
\begin{proof} (i) $\Rightarrow$ (ii) The projectivized dual of $\wedge^r \alpha$ induces an isomorphism $a: \mathbb G_{E_2,S_2} \to \mathbb G_{E_1,S_1}$ such that $g_{E_1,S_1} = a \circ  g_{E_2,S_2}$.
On the other hand,  $g_{_{E_i,S_i}}: X \to \mathbb G_{_{E_i,S_i}}$ is a morphism birational onto its image for $i = 1,2$. Hence $a$ lifts to an automorphism $f: X \to X$ with the required properties. (ii) $\Rightarrow$ (i) It suffices to put $\alpha = f^*$. \end{proof}
\medskip \par \noindent
Let $\rho_i: X^m \to X$ be the projection onto the $i$-th factor of $X^m$.  Then 
$$ 
{ev}_{_{E,S}}  \colon S \otimes O_{X^m} \to  \bigoplus_{i = 1, \dots , m} \rho_i^*E := \mathcal E
$$
is the morphism defined as follows. Let $U \subset  X^m$ be open, we observe that $\mathcal E(U) = E(U)^m$. Then we define the map
$
{ev}_{_{E,S}}(U): \ S  \to E(U)^m
$
as the natural restriction map. Since $ev_{_{E,S}}$ is a morphism of vector bundles of the same rank, its degeneracy locus is either $X^m$ or a divisor
$$ 
{\mathbb D}_{_{E,S}} \in \vert O_{X^m}(1,..,1) \vert.
$$
 \begin{definition} We will say that the divisor $\mathbb D_{_{E,S}}$ is the determinant divisor, or the Pl\"ucker form, of the pair $(E,S)$.  \end{definition} \par \noindent
 If the previous locus is $X^m$ we will say that $(E,S)$ has no Pl\"ucker form.
 \begin{lemma} Let $(E_1,S_1)$ and $(E_2,S_2)$ be isomorphic good pairs. Then $\mathbb D_{E_2,S_2} =  \mathbb D_{E_1,S_1}$.  \end{lemma}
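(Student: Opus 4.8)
The statement is essentially formal: the plan is to reduce it to the naturality of the evaluation map in the bundle variable together with the fact that the degeneracy locus of a morphism of vector bundles of equal rank is unchanged when the morphism is pre- and post-composed with isomorphisms.

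First I would unpack the hypothesis. An isomorphism $u\colon E_1\to E_2$ carrying $S_1$ onto $S_2$ induces an isomorphism $\alpha\colon S_1\to S_2$ on global sections and, after applying the $\rho_i^*$ and taking the direct sum, an isomorphism of rank $rm$ vector bundles
\[
\tilde u:=\bigoplus_{i=1}^m\rho_i^*u\ \colon\ \bigoplus_{i=1}^m\rho_i^*E_1\ \longrightarrow\ \bigoplus_{i=1}^m\rho_i^*E_2
\]
on $X^m$. The one point to verify is the commutativity of the square whose horizontal arrows are $ev_{E_1,S_1}$ and $ev_{E_2,S_2}$ and whose vertical arrows are $\alpha\otimes\mathrm{id}$ (the induced map of trivial bundles) and $\tilde u$, that is,
\[
\tilde u\circ ev_{E_1,S_1}=ev_{E_2,S_2}\circ(\alpha\otimes\mathrm{id}).
\]
This is immediate from the definition of $ev_{E,S}$ as the fibrewise restriction map $S\to E(U)^m$ together with the fact that $\alpha$ is composition with $u$: restricting a section and then composing with $u$ agrees with composing with $u$ and then restricting.

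Second, I would recall that for a morphism $\phi\colon A\to B$ of vector bundles of the same rank $rm$ the degeneracy locus is the zero scheme of $\wedge^{rm}\phi$, a global section of $\Hom(\wedge^{rm}A,\wedge^{rm}B)$; if $\psi_A\colon A'\to A$ and $\psi_B\colon B\to B'$ are isomorphisms then $\wedge^{rm}(\psi_B\circ\phi\circ\psi_A)=(\wedge^{rm}\psi_B)\circ(\wedge^{rm}\phi)\circ(\wedge^{rm}\psi_A)$, and since $\wedge^{rm}\psi_A$ and $\wedge^{rm}\psi_B$ are isomorphisms of line bundles the zero scheme is unaffected. Applying this with $\phi=ev_{E_1,S_1}$, $\psi_A=\alpha^{-1}\otimes\mathrm{id}$ and $\psi_B=\tilde u$, the commuting square gives $ev_{E_2,S_2}=\psi_B\circ\phi\circ\psi_A$, so $ev_{E_1,S_1}$ and $ev_{E_2,S_2}$ have the same degeneracy locus. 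Hence the degeneracy locus of the first is a divisor iff that of the second is; when it is, $\mathbb D_{E_1,S_1}=\mathbb D_{E_2,S_2}$ as members of $\vert\mathcal O_{X^m}(1,\dots,1)\vert$, and otherwise neither pair has a Pl\"ucker form and there is nothing to prove.

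There is no real obstacle here; the whole content is the commutativity of the naturality square, which is a routine check. Equivalently one argues pointwise: for $p\in X^m$ the fibre $ev_{E_2,S_2}(p)$ equals $\tilde u(p)\circ ev_{E_1,S_1}(p)\circ(\alpha\otimes\mathrm{id})(p)^{-1}$, a composite whose outer factors are isomorphisms, so it is an isomorphism precisely when $ev_{E_1,S_1}(p)$ is; the determinant identity above is only the scheme-theoretic refinement of this remark.
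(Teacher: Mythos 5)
Your proposal is correct and is essentially the paper's own argument, only written out in more detail: the paper's one-line proof likewise observes that pulling back $ev_{E_1,S_1}$ along the isomorphism $u$ yields $ev_{E_2,S_2}$, whence the degeneracy loci coincide. The naturality square and the $\wedge^{rm}$ determinant remark you spell out are exactly the content the paper leaves implicit.
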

\begin{proof} Let $u: E_1 \to E_2$ be an isomorphism such that $u^*S_2 = S_1$. Then, by taking the pull back of  $u$ to  $ev_{_{E_1,S_1}}: S_1 \otimes \mathcal O_X \to \mathcal E_1$,
we obtain $ev_{_{E_2, S_2}}$. This implies that $\mathbb D_{_{E_1, S_1}} = \mathbb D_{_{E_2,S_2}}$. \end{proof}
  \begin {remark} \rm Note that $\mathbb D_{E,S}$ contains  the multidiagonal $ \Delta_m$, i.e. the set of all the points $(x_1, \dots, x_m) \in X^m$ such that $x_i = x_j$ for some
 distinct $i,j \in \lbrace 1, \dots , m  \rbrace$. Moreover, $\Delta_m$ is a divisor in $X^m$  iff ${\dim} \ X = 1$. In this case $\mathbb D_{_{E,S}}$ is reducible:
\begin{proposition} Assume that $X$ is a curve, then
$$ 
{\mathbb D}_{_{E,S}} = (r+\epsilon) {\Delta}_m + \mathbb D^*_{_{E,S}}.
$$ 
where $\epsilon \geq 0$ and the support of the divisor $\mathbb  D^*_{_{E,S}}$ is the Zariski closure of the set
$$  \{ (x_1,..,x_m) \in X^m - {\Delta}_m \  \vert \ \exists \  s \in S \quad s(x_i) = 0, \ \ i= 1,..,m. \ \}$$
\end{proposition}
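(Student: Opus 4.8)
The plan is to read both assertions off the fibrewise and local behaviour of the morphism $ev_{E,S}\colon S\otimes\mathcal O_{X^m}\to\mathcal E$, where $\mathcal E=\bigoplus_{i=1}^m\rho_i^*E$. Since source and target have the same rank $rm$, the determinant ${\det}(ev_{E,S})$ is a section of ${\det}\,\mathcal E\otimes(\wedge^{rm}S)^\vee\cong\mathcal O_{X^m}(1,\dots,1)$, and it is nonzero precisely because $(E,S)$ has a Pl\"ucker form; $\mathbb D_{E,S}$ is its zero divisor. First I would record the $S_m$-symmetry: the symmetric group acts on $X^m$ by permuting the factors and on $\mathcal E$ by permuting the summands, $ev_{E,S}$ is equivariant, hence $\mathbb D_{E,S}$ is $S_m$-invariant. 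Because $X$ is a curve, each $\Delta_{ij}=\{x_i=x_j\}$ is a prime divisor in $X^m$ (isomorphic to $X^{m-1}$), and $S_m$ permutes the $\binom m2$ of them transitively; so all the $\Delta_{ij}$ occur in $\mathbb D_{E,S}$ with one common multiplicity $a\ge 0$. Writing $\mathbb D^*_{E,S}$ for the effective divisor obtained by subtracting $a\,\Delta_m$, this already gives $\mathbb D_{E,S}=a\,\Delta_m+\mathbb D^*_{E,S}$ with $\mathbb D^*_{E,S}$ free of diagonal components.

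The heart of the argument is to show $a\ge r$, which I would do by bounding the order of ${\det}(ev_{E,S})$ along $\Delta_{12}$. At \emph{every} point $o=(x,x,x_3,\dots,x_m)$ of $\Delta_{12}$ the fibre map $(ev_{E,S})_o\colon S\to E_x\oplus E_x\oplus E_{x_3}\oplus\dots\oplus E_{x_m}$ sends $s$ to $(s(x),s(x),s(x_3),\dots,s(x_m))$, whose image lies in $\{(a,a):a\in E_x\}\oplus E_{x_3}\oplus\dots\oplus E_{x_m}$, a subspace of dimension $\le r(m-1)$; hence $ev_{E,S}$ has corank at least $r$ all along $\Delta_{12}$. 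Working near a general point $p$ of $\Delta_{12}$, with a local equation $f$ for $\Delta_{12}$, I would choose trivializations of the source and target adapted to $(ev_{E,S})_p$, so that in block form $ev_{E,S}=\left(\begin{smallmatrix}A&B\\ C&D\end{smallmatrix}\right)$ with $A$ an invertible $(rm-c)\times(rm-c)$ block, $c$ being the corank of $(ev_{E,S})_p$ and so $c\ge r$. The Schur complement $D-CA^{-1}B$ is a $c\times c$ matrix whose generic rank along the irreducible $\Delta_{12}$ is $0$, hence it vanishes identically on $\Delta_{12}$ near $p$; therefore each of its entries is divisible by $f$, and ${\det}(ev_{E,S})=({\det}A)\cdot{\det}(D-CA^{-1}B)$ is divisible by $f^c$. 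Thus $a\ge c\ge r$; setting $\epsilon:=a-r\ge 0$ gives $\mathbb D_{E,S}=(r+\epsilon)\Delta_m+\mathbb D^*_{E,S}$.

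Finally I would identify ${\Supp}\,\mathbb D^*_{E,S}$. Since $\mathbb D^*_{E,S}$ is $\mathbb D_{E,S}$ with all of its diagonal components removed, ${\Supp}\,\mathbb D^*_{E,S}$ is the Zariski closure of ${\Supp}\,\mathbb D_{E,S}\setminus\Delta_m$, so it is enough to describe this set. For $o=(x_1,\dots,x_m)$ with the $x_i$ pairwise distinct, $(ev_{E,S})_o\colon S\to\bigoplus_i E_{x_i}$ is a linear map between vector spaces of the same dimension $rm$, and it fails to be an isomorphism --- equivalently, $o\in\mathbb D_{E,S}$ --- exactly when it fails to be injective, i.e.\ when there is a nonzero $s\in S$ with $s(x_i)=0$ for $i=1,\dots,m$. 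Taking Zariski closures yields the stated description of ${\Supp}\,\mathbb D^*_{E,S}$.

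The one genuinely non-formal step is the bound $a\ge r$: one has to turn the fibrewise corank inequality into a divisibility statement for ${\det}(ev_{E,S})$, which is exactly what the Schur-complement computation does, using the irreducibility of $\Delta_{12}$ to pass from ``rank $0$ generically'' to ``identically $0$''. The remaining points worth spelling out are that the excess multiplicity $\epsilon$ is common to all diagonal components --- this is precisely the role of the $S_m$-symmetry --- and that the whole discussion presupposes that $\mathbb D_{E,S}$ is genuinely a divisor, i.e.\ that $(E,S)$ has a Pl\"ucker form.
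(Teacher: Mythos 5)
Your proposal is correct and follows essentially the same route as the paper: both arguments rest on the observation that $ev_{E,S}$ has corank at least $r$ at every point of the diagonal, hence $\det(ev_{E,S})$ vanishes to order at least $r$ along each diagonal component, and both read off the support of $\mathbb D^*_{E,S}$ from the fibrewise description of $ev_{E,S}$ at points with distinct coordinates. The only difference is one of detail: you spell out, via the Schur complement and the $S_m$-symmetry, two steps (corank $\geq r$ along a prime divisor implies divisibility of the determinant by the $r$-th power of its equation, and the equality of the multiplicities of the various $\Delta_{ij}$) that the paper asserts without proof.
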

\begin{proof} Let  $x = (x_1,\dots, x_m) \in \Delta_m$. Then $ev_{_{E,S}}$ has rank $\leq rm -r$ at $x$. This implies that $x$ is a point of multiplicity $ \geq r$ of the determinant divisor ${\mathbb D}_{_{E,S}}$. Hence
$\Delta_m$ is a component of $\mathbb D_{_{E,S}}$ of multiplicity $\geq r$. This implies the statement.
\end{proof} \end{remark} \par \noindent
Actually, $\epsilon = 0$ if $E$ is a general semistable vector bundle on the curve $X$. It is enough to verify this property in the case $E = L^{\oplus r}$ and 
$S = H^0(E)$,  where  $L $ is a general line bundle on $X$ of degree $m + g -1$. In this case the Pl\"ucker form of $(E,S)$ is indeed $r$ times the Pl\"ucker
form of $(L, H^0(L))$. \par \noindent
It is  also non difficult to compute that $\mathbb D_{E,S} - r\Delta_m$ is 
numerically equivalent to $a^* r\Theta$, where $a: X^m \to {\pic}^m (X)$ is the natural Abel map and $\Theta \subset {\pic}^m (X)$ is a theta divisor.
Finally we consider the commutative diagram 
$$
\CD
{X^m} @>{g_{_{_{E,S}}}^m}>> {\mathbb G_{_{E,S}}^m} \\
@VVV @VVV \\
{(\mathbb P^{n})^m} @>{\lambda_{_{_{E,S}}}^m}>> { (\mathbb P^N)^m} \\
 \endCD
$$
where the vertical arrows are the inclusion maps.
 \begin{lemma} Let  $\mathbb D_{E,S}$ be the Pl\"ucker form of a good pair $(E,S)$, then
$$
 {\mathbb D}_{_{E,S}} = ( {\lambda_{_{_{E,S}}}^m})^*\mathbb D_{r,m}.
$$
 \end{lemma}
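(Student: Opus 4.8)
The plan is to recognise the evaluation map $ev_{E,S}$ as a pullback of the universal evaluation map $ev^m$ of Section 3, and then to transport the identity $\mathbb D_{\mathbb G}=\mathbb D_{r,m}\cdot\mathbb G_{E,S}^m$ of Theorem 3.7 down to $X^m$ along the commutative square displayed just above. Concretely, I would first establish the bundle-level identity $(g_{E,S}^m)^*ev^m=ev_{E,S}$ under the canonical identifications, then read off the determinant sections, and finally chase the square.

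For the first step, observe that $\pi_s\circ g_{E,S}^m=g_{E,S}\circ\rho_s$ for every $s$, so Lemma 4.3 ($E\cong g_{E,S}^*\mathcal U_{E,S}^*$ and $S=g_{E,S}^*H^0(\mathcal U_{E,S}^*)$) gives
$$
(g_{E,S}^m)^*\mathcal F \;=\; \bigoplus_{s=1}^{m}\rho_s^*\,g_{E,S}^*\,\mathcal U_{E,S}^* \;=\; \bigoplus_{s=1}^{m}\rho_s^*E \;=\; \mathcal E ,
$$
while $(g_{E,S}^m)^*\big(V^*\otimes\mathcal O_{\mathbb G_{E,S}^m}\big)=S\otimes\mathcal O_{X^m}$ since $V=S^*$. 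Tracing $ev^m$ through these identifications turns it into the restriction-of-sections morphism; checked fibrewise at a point $(x_1,\dots,x_m)\in X^m$ the pulled-back map is $s\mapsto(s(x_1),\dots,s(x_m))$, which is precisely $ev_{E,S}$. Now $ev^m$ and $ev_{E,S}$ are maps of vector bundles of equal rank $rm$, so $\mathbb D_{\mathbb G}$ and $\mathbb D_{E,S}$ are the zero schemes of the determinant sections ${\det}(ev^m)\in H^0\big(\mathcal O_{\mathbb G_{E,S}^m}(1,\dots,1)\big)$ and ${\det}(ev_{E,S})\in H^0\big(\mathcal O_{X^m}(1,\dots,1)\big)$; since ${\det}$ commutes with pullback, ${\det}(ev_{E,S})=(g_{E,S}^m)^*{\det}(ev^m)$, and as $(E,S)$ has a Pl\"ucker form this section is non-zero, whence $\mathbb D_{E,S}=(g_{E,S}^m)^*\mathbb D_{\mathbb G}$ as effective Cartier divisors on $X^m$.

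It then remains to substitute Theorem 3.7 and chase the square. Since $\mathbb D_{r,m}\cdot\mathbb G_{E,S}^m$ is the restriction to $\mathbb G_{E,S}^m$ of the Cartier divisor $\mathbb D_{r,m}$ on $(\mathbb P^N)^m$, pulling it back along $g_{E,S}^m$ is the same as pulling $\mathbb D_{r,m}$ back along $g_{E,S}^m$ regarded as a morphism into $(\mathbb P^N)^m$; by the commutative square that morphism is $\lambda_{E,S}^m$ restricted to $X^m$, so $(g_{E,S}^m)^*\mathbb D_{\mathbb G}=(\lambda_{E,S}^m)^*\mathbb D_{r,m}$. No indeterminacy issue arises, for although $\lambda_{E,S}$ is only a rational map on $\mathbb P^n$, its restriction to $X$ is the morphism $g_{E,S}$, hence $\lambda_{E,S}^m$ is a morphism along $X^m$. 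Combining the two identities yields $\mathbb D_{E,S}=(\lambda_{E,S}^m)^*\mathbb D_{r,m}$.

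I expect the only point needing care to be the identification $(g_{E,S}^m)^*ev^m=ev_{E,S}$ in the first step, which is bookkeeping with the canonical identifications of Lemma 4.3 and of double duals. It is worth noting that for the divisor identity one does not even need this to hold on the nose: it suffices that $(g_{E,S}^m)^*ev^m$ and $ev_{E,S}$ agree up to pre- and post-composition with bundle isomorphisms, since then their determinants differ by an automorphism of $\mathcal O_{X^m}(1,\dots,1)$, i.e.\ by a non-zero constant (as $X^m$ is integral and proper), and hence cut out the same divisor. Everything else is formal: compatibility of ${\det}$ with pullback, a verbatim application of Theorem 3.7, and diagram-chasing. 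The standing assumption that $\mathbb D_{E,S}$ be an honest divisor rather than all of $X^m$ costs nothing: in the degenerate case ${\det}(ev_{E,S})\equiv 0$, and then $(g_{E,S}^m)^*{\det}(ev^m)$ vanishes identically as well, so the two sides still coincide.
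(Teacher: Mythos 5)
Your proposal is correct and follows essentially the same route as the paper: identify $ev_{E,S}$ as the pullback of the universal evaluation map $ev^m$ along $g_{E,S}^m$, take determinants, and use the commutative square together with Theorem 3.7 to rewrite $(g_{E,S}^m)^*\mathbb D_{\mathbb G}$ as $(\lambda_{E,S}^m)^*\mathbb D_{r,m}$. You simply spell out details (compatibility of $\det$ with pullback, the non-issue of indeterminacy of $\lambda_{E,S}$ along $X$) that the paper's two-line proof leaves implicit.
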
 
\begin{proof} Lifting by $g_{_{E,S}}^m $ the map $ev^m: V \otimes \mathcal O_{\mathbb G_{_{E,S}}} \to \bigoplus_{i = 1, \dots , m}\pi_s^* \mathcal U^*_{_{E,S}}$, one obtains the map 
$  {ev}_{_{E,S}}  \colon S \otimes O_{X^m} \to  \bigoplus_{i = 1, \dots , m} \rho_i^*E.$ From  the commutativity of the above diagram it follows that 
 $\mathbb D_{_{E,S}} =  {({\lambda}_{E,S}^m)}^* \mathbb D_{r,m}
= {(g_{E,S}^m)}^* \mathbb D_{{\mathbb G}_{E,S}}$.  \end{proof} \par  \noindent 
To a good pair $(E,S)$ we have associated its Pl\"ucker form $\mathbb D_{_{E,S}}$.  Now we want to prove that, under suitable assumptions, a good pair $(E,S)$ is uniquely reconstructed from $\mathbb D_{_{E,S}}$.  To this purpose we define the following projective variety in  the ambient  space $\mathbb P^n$ of $X$.
  \begin{definition} \it $\Gamma_{_{E,S}}$ is the  closure of the set of points $x \in \mathbb P^n$ such that: \begin{enumerate}
\item $\mathbb D_{_{E,S}}$ has multiplicity $m-1$ at the point $o = (x, \dots,x) \in (\mathbb P^n)^m$,
\item the tangent space to ${\Sing}( \mathbb D_{_{E,S}})$ at $o$ has maximal dimension.
\end{enumerate}
\end{definition} \par \noindent
 \begin{theorem} Assume that $d_{_{E,S}}$ is injective and $m \geq 3$. 
Then: \begin{enumerate}
\item $\Gamma_{_{E,S}}$ is a cone in $\mathbb P^n$ with directrix the Grassmannian $\mathbb G_{_{E,S}}$, 
\item  the vertex of the cone $\Gamma_{_{E,S}}$ is the center of the  projection $\lambda_{_{_{E,S}}}$. 
\end{enumerate}
\end{theorem}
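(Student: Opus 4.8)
The plan is to deduce the statement from the reconstruction Theorem 3.1 by transporting it from $\mathbb P(\wedge^r V)$ to $\mathbb P^n$ along the linear map $\lambda_{E,S}$. Write $\ell:=d_{E,S}^{*}\colon H^0(\mathcal O_X(1))^{*}\to(\wedge^r S)^{*}=\wedge^r V$ for the transpose of the determinant map. Since $d_{E,S}$ is injective, $\ell$ is surjective, so $\lambda_{E,S}\colon\mathbb P^n\dashrightarrow\mathbb P(\wedge^r V)$ is an honest linear projection: it is a morphism on $\mathbb P^n\setminus\Lambda$, where $\Lambda:=\mathbb P(\ker\ell)$ is its centre; it maps this complement surjectively onto $\mathbb P(\wedge^r V)$, hence onto $\mathbb G_{E,S}$; and it is there a smooth morphism, its fibres being the punctured linear spans $\langle\Lambda,p\rangle\setminus\Lambda$. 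Throughout I use $\mathbb D_{E,S}$ also for the divisor $(\lambda_{E,S}^m)^{*}\mathbb D_{r,m}$ of $(\mathbb P^n)^m$, that is, the zero locus of the multilinear form $d_{r,m}\circ\ell^{\times m}$; this agrees with Lemma 4.9 on $X^m$ and is the divisor meant in Definition 4.11.

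First I would set $U:=(\mathbb P^n\setminus\Lambda)^m$, so that $\lambda_{E,S}^m\colon U\to\mathbb P(\wedge^r V)^m$ is a smooth morphism of some relative dimension $c$, and use that such morphisms preserve all the relevant local data. For $x\notin\Lambda$, $o=(x,\dots,x)$ and $w:=\lambda_{E,S}(x)$ this yields $\mathrm{mult}_o\,\mathbb D_{E,S}=\mathrm{mult}_{(w,\dots,w)}\,\mathbb D_{r,m}$, it yields ${\Sing}(\mathbb D_{E,S})\cap U=(\lambda_{E,S}^m)^{-1}({\Sing}(\mathbb D_{r,m}))$, and, since the tangent space of the preimage of a subscheme under a smooth map of relative dimension $c$ has dimension equal to that of the tangent space downstairs plus $c$, it yields $\mathrm{codim}\,T_{{\Sing}(\mathbb D_{E,S}),o}=\mathrm{codim}\,T_{{\Sing}(\mathbb D_{r,m}),(w,\dots,w)}$ (here, as in Theorem 3.1, ${\Sing}(\cdot)$ denotes the locus of multiplicity $\ge m-1$, which by Corollary 2.6 is the top multiplicity stratum and, as $m\ge 3$, sits inside the ordinary singular locus). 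Since $\dim T_{(\mathbb P^n)^m,o}=nm$ is independent of $o$, carrying a tangent space of maximal dimension upstairs is the same as downstairs. Hence, for $x\notin\Lambda$, conditions (1)–(2) of Definition 4.11 hold at $o$ exactly when $(w,\dots,w)\in\Delta\cap{\Sing}_{m-1}(\mathbb D_{r,m})$ and its tangent space there is of maximal dimension, i.e. by Theorem 3.1 exactly when $w\in\mathbb G_{E,S}$. It remains to dispose of $x\in\Lambda$: then $\ell(\tilde x)=0$ for a representative $\tilde x$, so in the coordinates $y_i=\tilde x+t_i$ the local equation $d_{r,m}(\ell y_1,\dots,\ell y_m)$ of $\mathbb D_{E,S}$ near $o$ becomes the purely multilinear expression $d_{r,m}(\ell t_1,\dots,\ell t_m)$, which vanishes to order exactly $m$ at $o$ (it is not identically zero because $\ell$ is onto); so such $x$ violates (1). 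Therefore $\{\,x\in\mathbb P^n:\text{(1) and (2) hold}\,\}=\lambda_{E,S}^{-1}(\mathbb G_{E,S})$.

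Taking closures gives $\Gamma_{E,S}=\overline{\lambda_{E,S}^{-1}(\mathbb G_{E,S})}$; since $\lambda_{E,S}$ is a linear projection with centre $\Lambda$, this closure is exactly the union of the linear spans $\langle\Lambda,p\rangle$ as $p$ runs over a section of $\lambda_{E,S}$ above $\mathbb G_{E,S}$, i.e. the cone in $\mathbb P^n$ over $\mathbb G_{E,S}$ with vertex $\Lambda$. As $\mathbb G_{E,S}$ is non-degenerate in $\mathbb P(\wedge^r V)$ and, for $r\ge 2$, is not itself a cone, the vertex of $\Gamma_{E,S}$ is precisely $\Lambda$, the centre of $\lambda_{E,S}$; this proves (i) and (ii). I expect the one delicate point — the main obstacle — to be the careful bookkeeping that makes multiplicities and tangent-space codimensions readable downstairs on $\mathbb D_{r,m}$, namely that $\lambda_{E,S}^m$ is a submersion precisely over $\mathbb P^n\setminus\Lambda$ (this is exactly where injectivity of $d_{E,S}$ is used), together with the fact that the indeterminacy locus $\Lambda$, excluded from the defining set because there the multiplicity jumps to $m$, must nonetheless reappear at the end as the vertex of the cone.
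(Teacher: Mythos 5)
Your proof is correct and follows essentially the same route as the paper: both transport the characterization of $\mathbb G_{E,S}$ from Theorem 3.1 through the linear projection $\lambda_{E,S}$, using the injectivity of $d_{E,S}$ to see that $(\lambda_{E,S}^m)^*$ preserves multiplicities and tangent-space codimensions at diagonal points away from the centre, and then concluding that $\Gamma_{E,S}$ is the cone over $\mathbb G_{E,S}$ with vertex that centre. Your explicit handling of the centre $\Lambda$ (where the pulled-back form acquires multiplicity $m$) and of the final closure is somewhat more careful than the paper's, which simply assumes $\lambda_{E,S}^m$ is a morphism at $o$ and asserts the cone structure, but the underlying argument is the same.
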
 
\begin{proof}  Since $\lambda_{_{E,S}}$ is the projective dual of $d_{_{E,S}}$, the tensor product  map 
$$d_{_{E,S}}^{\otimes m}: (\wedge^r S)^{\otimes m} \to H^0(\mathcal O_X(1))^{\otimes m}$$
 is precisely the pull-back map
$$
(\lambda_{_{_{E,S}}}^m)^*: H^0(\mathcal O_{({\mathbb P}( \wedge^r V))^m}(1, \dots, 1)) \to H^0(\mathcal O_{(\mathbb P ^n)^m}(1, \dots, 1)).
$$
Moreover it is injective. Let $F \in  H^0(\mathcal O_{({\mathbb P}( \wedge^r V))^m}(1, \dots, 1)) $ be the  polynomial of multidegree $(1, \dots, 1)$ defining $\mathbb D_{r,m}$.  Then we can choose coordinates on  $({\mathbb P} (\wedge^r V))^m$ and $(\mathbb P^n)^m$ so that $d_{E,S}^{\otimes m}(F) = F$.  Assume that $\lambda_{E,S}^m$ is a morphism at the point $o \in (\mathbb P^n)^m$, then it follows that: 
\hfill\par\noindent  
(a) \it  $\lambda_{_{E,S}}^m(o) \in {\Sing}_{m-1}(\mathbb D_{r,m})$ iff $o \in {\Sing}_{m-1}(\mathbb D_{E,S})$, \rm
\hfill\par\noindent
(b) \it the codimension is equal for the  tangent spaces to ${\Sing}_{m-1}(\mathbb D_{r,m})$ at $\lambda_{_{E,S}}^m(o)$ and  to ${\Sing}_{m-1}(\mathbb D_{E,S})$ at $o$.
\rm \par \noindent
Assume that $o = (x, \dots, x)$ is a diagonal point in  $(\mathbb P^n)^m$. Then $x \in \Gamma_{E,S}$ iff $o$ satisfies  (i) and (ii) in Definition 4.10. By (a) and (b), conditions (i) and (ii) hold true for $o$ iff they hold true for $\lambda_{_{E,S}}^m(o)$ as a point of $\mathbb D_{r,m}$. Finally, by Theorem 3.1,  $\lambda_{_{E,S}}(o)$ satisfies (i) and (ii) iff $x$ belongs to the Grassmannian $\mathbb G_{E,S}$. Hence $\Gamma_{E,S}$ is a cone over $\mathbb G_{E,S}$ with vertex the center of $\lambda_{E,S}$. \end{proof}  \par \noindent
We are now able to show the main result of the current  section.  
\begin{theorem} Let $(E_{1}, S_{1})$ and  $(E_{2}, S_{2})$ be good pairs defining the same Pl\"ucker form $\mathbb D \subset (\mathbb P^n)^m$. Assume that  $m \geq 3$  and $d_{_{E_i,S_i}}$ is injective for any $i=1,2$, then  there exists $f \in {\Aut} (X)$
such that $f^{*}E_{2} \cong E_{1}$ and $f^{*}S_{2} = S_{1}$.
\end{theorem}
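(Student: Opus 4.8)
The plan is to recover $X\subset\mathbb P^n$ together with each good pair $(E_i,S_i)$ directly from the common Pl\"ucker form $\mathbb D:=\mathbb D_{E_1,S_1}=\mathbb D_{E_2,S_2}$, and then to match the two reconstructions via an automorphism of $X$. The key point is that $\mathbb D$ is an intrinsic divisor in $(\mathbb P^n)^m$, so any object built out of $\mathbb D$ and the diagonal inclusion $\Delta\hookrightarrow(\mathbb P^n)^m$ is simultaneously attached to both pairs.

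First I would invoke Theorem 4.11. For each $i$, since $m\ge 3$ and $d_{E_i,S_i}$ is injective, the variety $\Gamma_{E_i,S_i}\subset\mathbb P^n$ attached to $\mathbb D_{E_i,S_i}$ (Definition 4.10) is a cone whose directrix is the Grassmannian $\mathbb G_{E_i,S_i}$ and whose vertex is the center of the linear projection $\lambda_{E_i,S_i}$. But $\Gamma_{E_i,S_i}$ depends only on $\mathbb D_{E_i,S_i}=\mathbb D$ and on the diagonal points $o=(x,\dots,x)$, hence $\Gamma_{E_1,S_1}=\Gamma_{E_2,S_2}=:\Gamma$. Therefore the two cones coincide; in particular they have the same vertex, so $\lambda_{E_1,S_1}$ and $\lambda_{E_2,S_2}$ have the same center of projection and define, up to a linear automorphism of the target, the same map $\mathbb P^n\dashrightarrow\mathbb P^N$. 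This produces a linear isomorphism $a\colon\mathbb G_{E_2,S_2}\to\mathbb G_{E_1,S_1}$ of the two (projectively identical) directrices — equivalently, identifying $V_i=S_i^*$, an isomorphism $\alpha\colon S_1\to S_2$ with $d_{E_1,S_1}=d_{E_2,S_2}\circ(\wedge^r\alpha)$, after normalizing coordinates so that $d_{E_i,S_i}^{\otimes m}$ carries the equation $F$ of $\mathbb D_{r,m}$ to the equation of $\mathbb D$ (as in the proof of Theorem 4.11).

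Next I would feed this into Lemma 4.5: the equality $d_{E_1,S_1}=d_{E_2,S_2}\circ(\wedge^r\alpha)$ for an isomorphism $\alpha\colon S_1\to S_2$ is condition (i) of that lemma, and it is equivalent to condition (ii), namely the existence of $f\in\Aut(X)$ with $f^*E_1\cong E_2$ and $f^*S_1=S_2$. Replacing $f$ by $f^{-1}$ gives $f^*E_2\cong E_1$ and $f^*S_2=S_1$, which is the assertion. The only subtlety to check carefully is that the linear map $\mathbb P^n\dashrightarrow\mathbb P^N$ recovered from $\Gamma$ really is, up to ${\rm PGL}$ of the target, a common refinement of both $\lambda_{E_i,S_i}$: one must observe that the cone $\Gamma$ determines its vertex intrinsically (as its singular locus, or as the intersection of the tangent spaces along a general ruling), and that the directrix $\mathbb G\subset\mathbb P^N$ together with its embedding determines the universal bundle $\mathcal U_r$ and hence $V=H^0(\mathcal U_r^*)^*$ canonically (for $m\ge 3$ the universal bundle is unique, as recalled before Definition 3.7), so that reading off $\lambda_{E_i,S_i}$ from $\Gamma$ on each side yields the same data.

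The main obstacle is exactly this last bookkeeping step: ensuring that the identification of the two copies of $\mathbb P^N$ coming from the two sides is canonical, rather than merely abstract, so that the resulting $\alpha\colon S_1\to S_2$ genuinely intertwines $d_{E_1,S_1}$ and $d_{E_2,S_2}$. Once the coordinates are pinned down by the normalization $d_{E_i,S_i}^{\otimes m}(F)=(\text{equation of }\mathbb D)$ used in Theorem 4.11, this is forced, and the rest is a direct appeal to Theorem 4.11 and Lemma 4.5.
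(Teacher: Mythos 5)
Your proposal is correct and follows essentially the same route as the paper: recover the cone $\Gamma$ intrinsically from $\mathbb D$, apply Theorem 4.11 to identify the two vertices and directrices, lift the resulting identification of Grassmannians to a linear isomorphism $\alpha\colon S_1\to S_2$ intertwining the determinant maps (the paper cites Harris for the fact that, for $m\ge 3$, any such projective isomorphism of Grassmannians is induced by $\wedge^r$ of a linear map), and conclude via the equivalence lemma. Note only that the lemma you invoke is Lemma 4.4 in the paper's numbering, not 4.5.
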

\begin{proof} Let $\Gamma$ be the closure of the set of diagonal points $o = (x, \dots, x) \in  \mathbb D$ of multiplicity $m-1$ and  tangent space $T_{{\Sing}_{_{m-1}}(\mathbb D), o}$ of maximal dimension.  By Theorem 4.11,  $\Gamma$  is a cone in $\mathbb P^n$: its directrix is the Grassmannian $G_{_{E_i,S_i}}$ and its vertex is the center of the projection $\lambda_{_{E_i,S_i}}$, both for $i = 1$
and $i = 2$. Since the projection maps   $\lambda_{_{E_i,S_i}}$ have  the same center, there exist an isomorphism  
$\sigma \colon G_{_{E_2,S_2}} \to G_{_{E_1,S_1}}$ such that 
$\lambda_{_{E_1,S_1}} = \sigma \circ \lambda_{_{E_2,S_2}}$.  Since $m \geq 3$, then $\sigma = {\wedge}^r {\alpha}^*$ for 
an isomorphism $\alpha \colon S_1 \to S_2$, see \cite{key7bis} p.122. 
 Then, applying Lemma 4.4, it follows $f^*E_1 \cong E_2$ and $f^*S_1 = S_2$ for some $f \in {\Aut}(X)$.
\end{proof}  \noindent
To conclude this section we briefly summarize,  in a general statement, how  to deduce from the previous results the generic injectivity of some natural maps, defined  on a moduli space of good pairs as above.  
Therefore we assume that a coarse moduli space $\mathcal S$ exists for the family of good pairs $(E,S)$ under consideration. This is, for instance the case when $E$ is stable with respect to the polarization
$\mathcal O_X(1)$ and $S = H^0(E)$. Then there exists a natural map
$$
\theta_{r,m}: \mathcal S \to \mid \mathcal O_{X^m}(1, \dots, 1) \mid
$$
sending the moduli point of $(E,S)$ to its determinant divisor $\mathbb D_{_{E,S}}$. Let $(E_1,S_1)$ and $(E_2,S_2)$ be good pairs as above defining two general points of $\mathcal S$. Assume
that $\mathbb D_{E_1,S_1} = \mathbb D_{E_2,S_2}$. Then we know from  Theorem 4.12 that then $(E_1,S_1)$ and $(E_2,S_2)$ are isomorphic if $m \geq 3$, ${\Aut} (X) = 1$ and
$$
d_{E_i,S_i}: \wedge^r S_i \to H^0(\mathcal O_X(1)).
$$
is injective. This implies the next statement:
\begin{theorem} Let $m \geq 3$ and ${\Aut} (X) = 1$. Assume 
$
d_{_{E,S}}: \wedge^r S \to H^0(\mathcal O_X(1))
$
is injective for good pairs $(E,S)$ with moduli in a dense open subset of $\mathcal S$. Then $\theta_{r,m}$ is generically injective.
\end{theorem}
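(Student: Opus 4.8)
The plan is to deduce the statement directly from Theorem 4.12, the only extra work being a little bookkeeping about the domain of definition of $\theta_{r,m}$. Let $\mathcal U \subseteq \mathcal S$ be the dense open subset furnished by the hypothesis, over which every good pair $(E,S)$ has injective determinant map $d_{E,S}\colon \wedge^r S \to H^0(\mathcal O_X(1))$. First I would check that $\theta_{r,m}$ is actually regular on $\mathcal U$, i.e.\ that such a pair really has a Pl\"ucker form (and not the degenerate case $\mathbb D_{E,S} = X^m$). By Lemma 4.9 one has $\mathbb D_{E,S} = (\lambda_{E,S}^m)^*\mathbb D_{r,m}$; and, exactly as in the proof of Theorem 4.11, on sections of $\mathcal O(1,\dots,1)$ the pull-back $(\lambda_{E,S}^m)^*$ is the map $d_{E,S}^{\otimes m}$. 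Since a tensor product of injective $\mathbb C$-linear maps is injective, $d_{E,S}^{\otimes m}$ is injective, hence it does not annihilate the nonzero form $F$ cutting out $\mathbb D_{r,m}$; therefore $\mathbb D_{E,S}$ is a genuine effective divisor of multidegree $(1,\dots,1)$ and $\theta_{r,m}$ restricts to an honest morphism $\mathcal U \to \vert\mathcal O_{X^m}(1,\dots,1)\vert$.

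Next I would prove that this restriction is injective. Suppose $(E_1,S_1)$ and $(E_2,S_2)$ are good pairs with moduli points in $\mathcal U$ for which $\theta_{r,m}$ takes the same value, i.e.\ $\mathbb D_{E_1,S_1} = \mathbb D_{E_2,S_2}$ in $\vert\mathcal O_{X^m}(1,\dots,1)\vert$. Since $X \subset \mathbb P^n$ is linearly normal and nondegenerate, the restriction map $H^0(\mathcal O_{(\mathbb P^n)^m}(1,\dots,1)) \to H^0(\mathcal O_{X^m}(1,\dots,1))$ is injective, so the two pairs in fact define the same divisor $\mathbb D \subset (\mathbb P^n)^m$. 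As $m \geq 3$ and both $d_{E_i,S_i}$ are injective, Theorem 4.12 now applies and produces $f \in \mathrm{Aut}(X)$ with $f^*E_2 \cong E_1$ and $f^*S_2 = S_1$. By hypothesis $\mathrm{Aut}(X) = \{1\}$, so $f = \mathrm{id}_X$; hence $E_1 \cong E_2$ by an isomorphism carrying $S_1$ onto $S_2$, i.e.\ $(E_1,S_1)$ and $(E_2,S_2)$ are isomorphic good pairs and thus represent the same point of the coarse moduli space $\mathcal S$. Therefore $\theta_{r,m}$ is injective on the dense open subset $\mathcal U$, which is precisely the asserted generic injectivity.

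I do not expect a real obstacle: all the substance has been placed upstream --- the reconstruction of $\mathbb G$ from $\mathbb D_{r,m}$ and $\mathrm{Sing}_{m-1}(\mathbb D_{r,m})$ (Theorem 3.1), its transfer to $\mathbb P^n$ (Theorem 4.11), and the reconstruction of a good pair from its Pl\"ucker form (Theorem 4.12). The only points needing care are the three used above: that $\theta_{r,m}$ is regular on $\mathcal U$ (injectivity of $d_{E,S}^{\otimes m}$), that equality of Pl\"ucker forms inside $X^m$ forces equality inside $(\mathbb P^n)^m$ (linear normality), and that on the stable locus with $S = H^0(E)$ the coarse moduli space separates isomorphism classes, so that ``same image under $\theta_{r,m}$'' upgrades to ``same point of $\mathcal S$''. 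The genuinely hard input, namely verifying the hypothesis that $d_{E,S}$ is injective on a dense open of $\mathcal S$, is the business of the later sections, where $X$ is taken to be a general curve of genus $g \gg r$.
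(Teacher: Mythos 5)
Your proposal is correct and follows essentially the same route as the paper: the paper's proof is exactly the reduction to Theorem 4.12 together with the hypothesis $\mathrm{Aut}(X)=1$, concluding that two good pairs with the same Pl\"ucker form are isomorphic and hence give the same point of $\mathcal S$. The extra bookkeeping you supply (regularity of $\theta_{r,m}$ on the open set via injectivity of $d_{E,S}^{\otimes m}$, and lifting the equality of divisors from $X^m$ to $(\mathbb P^n)^m$ via linear normality) is left implicit in the paper but is consistent with its argument.
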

 
 \section {Pl\"ucker forms and the theta map of $SU_X(r,0)$} \noindent
Now we apply the preceding arguments to study  the theta map of the moduli space $SU_X(r,0)$ of semistable vector bundles of rank $r$ and trivial determinant over a curve $X$ of genus $g \geq 2$. 
By definition the theta map
$$
\theta_r: SU_X(r,0) \to {\mathbb P}(H^0(\mathcal L)^*)
$$
is just the rational map defined by the ample generator $\mathcal L$ of $SU_X(r,0)$.  We prove our main result: 
\begin{theorem} Let $X$ be  general and $g >> r$, then $\theta_r$ is generically injective.
 \end{theorem}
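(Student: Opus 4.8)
The plan is to derive the theorem from Theorem 4.13 by exhibiting $SU_X(r,0)$, up to birational equivalence, as a moduli space of good pairs. First I would fix $m=3$ and a line bundle $\mathcal{O}_X(1)$ on $X$ of degree $r(m+g-1)$, and embed $X$ in $\mathbb{P}^n$ by $|\mathcal{O}_X(1)|$, where $n+1=h^0(\mathcal{O}_X(1))=rm+(r-1)(g-1)$. For $X$ general of genus $g\geq 3$ this embedding is linearly normal and non-degenerate, and ${\Aut}(X)=1$. For a general stable bundle $E$ with ${\det}\,E\cong\mathcal{O}_X(1)$ and $h^1(E)=0$, Riemann--Roch gives $h^0(E)=rm$, so $S:=H^0(E)$ has the dimension demanded by Definition 4.1; moreover, when $g$ is large relative to $r$, such a general $E$ is globally generated by $S$ and has classifying map $g_{E,S}$ birational onto its image, so $(E,S)$ is a good pair. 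Let $\mathcal{S}_r$ be the resulting irreducible moduli space of good pairs $(E,H^0(E))$ and $\theta_{r,m}\colon\mathcal{S}_r\to|\mathcal{O}_{X^m}(1,\dots,1)|$ the associated map of Section 4.

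Next I would build the comparison with $SU_X(r,0)$. Fixing an $r$-th root $t$ of $\mathcal{O}_X(1)$, of degree $m+g-1$, the rule $(E,H^0(E))\mapsto E(-t)$ is a birational map $\alpha\colon\mathcal{S}_r\to SU_X(r,0)$, whose inverse sends a general $F$ to $(F(t),H^0(F(t)))$, since $F(t)$ is then stable with determinant $\mathcal{O}_X(1)$ and $h^1(F(t))=0$. To factor $\theta_{r,m}$ through $\theta_r$ I would compute the Pl\"ucker form of $(F(t),H^0(F(t)))$: by Proposition 4.9 and the remark following it (with $\epsilon=0$, which holds for general bundles) it equals $r\Delta_m+\mathbb{D}^*$, where the support of $\mathbb{D}^*$ is the closure of the set of $(x_1,\dots,x_m)$ with $h^0\bigl(F(t)(-x_1-\dots-x_m)\bigr)>0$, equivalently with $t-x_1-\dots-x_m$ lying on the generalized theta divisor $\Theta_F$ of $F$ in ${\pic}^{g-1}(X)$. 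A determinant-versus-theta-function comparison then identifies $\mathbb{D}^*$ with $\phi^*\Theta_F$, where $\phi\colon X^m\to{\pic}^{g-1}(X)$ is the Abel-type map $(x_1,\dots,x_m)\mapsto t-x_1-\dots-x_m$, and in particular gives $\phi^*\mathcal{O}(r\Theta)\cong\mathcal{O}_{X^m}(1,\dots,1)(-r\Delta_m)$, consistently with the numerical computation recalled after Proposition 4.9. Hence the rule $D\mapsto r\Delta_m+\phi^*D$ is a rational linear map $\beta\colon|r\Theta|\to|\mathcal{O}_{X^m}(1,\dots,1)|$, and on a dense open set $\theta_{r,m}\circ\alpha^{-1}=\beta\circ\theta_r$, since $\theta_r$ is the assignment $F\mapsto\Theta_F$ under the standard identification $\mathbb{P}(H^0(\mathcal{L})^*)=|r\Theta|$ of \cite{key6ter}.

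Granting this, it suffices to prove that $\theta_{r,m}$ is generically injective: then $\theta_{r,m}\circ\alpha^{-1}$, hence $\beta\circ\theta_r$, hence $\theta_r$ are generically injective, $\alpha$ being birational. For $\theta_{r,m}$ I would invoke Theorem 4.13, whose hypotheses are $m\geq 3$ (our choice), ${\Aut}(X)=1$ (general $X$), and the injectivity of $d_{E,S}\colon\wedge^r H^0(E)\to H^0(\mathcal{O}_X(1))$ for $(E,H^0(E))$ ranging over a dense open subset of $\mathcal{S}_r$. The first two are immediate, and the third is the heart of the matter and the step I expect to be the main obstacle.

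For that injectivity I would use that it is an open condition on the irreducible variety $\mathcal{S}_r$, so it is enough to produce a single good pair $(E_0,H^0(E_0))$ with $d_{E_0}$ injective. A naive choice fails in an instructive way: if $L\hookrightarrow E_0$ is a line subbundle with $h^0(L)\geq r$, then $\wedge^r H^0(L)\subseteq\wedge^r H^0(E_0)$ lies in ${\Ker}\,d_{E_0}$, because $r$ sections of $L$ are everywhere linearly dependent in $E_0$ and so their wedge vanishes; more generally every proper subsheaf of $E_0$ must carry few enough sections. Imposing this, together with the numerical inequality $\binom{rm}{r}\leq rm+(r-1)(g-1)$ forced by injectivity, is precisely what requires $g\gg r$ — for $m=3$ one is pushed to $g$ of order $\binom{3r}{r}/r$ — since then the general stable $E_0$ has all its subbundles of slope well below $\deg\mathcal{O}_X(1)/r$, so that their twists by effective divisors of degree $m$ are non-effective. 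I would then exhibit the required $E_0$ by degenerating $X$ to a chain of curves of small genus and analysing $d_{E,S}$ through limit linear series, reducing the injectivity to a finite computation on the components — a step for which auxiliary parameter spaces of curves with extra structure, such as $\AUX$, are brought in — and verifying that base case directly. This is the technically heaviest part; once it is done, Theorem 4.13 yields the generic injectivity of $\theta_{r,m}$, and the reductions above give that $\theta_r$ is generically injective.
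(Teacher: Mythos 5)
Your reduction of the theorem to Theorem 4.13 is essentially the paper's own argument: you construct the moduli space $\mathcal S_r$ of good pairs $(E,H^0(E))$ with $\det E\cong\mathcal O_X(1)$ of degree $r(m+g-1)$, identify it birationally with $SU_X(r,0)$ via twisting by an $r$-th root $t$, and factor $\theta_{r,m}$ through $\theta_r$ by the linear map $D\mapsto a_t^*D+r\Delta_m$ (this is the paper's Proposition 5.3, proved exactly by the comparison $\mathbb D_E=a_t^*\Theta_E+r\Delta_m$ that you describe). The remaining hypotheses of Theorem 4.13 --- $m\geq 3$, ${\rm Aut}(X)=1$, and the fact that $(E,H^0(E))$ is a good pair admitting a Pl\"ucker form for general $E$ --- are verified as you say (Proposition 5.2). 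Up to this point your proposal is correct and coincides with the paper.

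The gap is the step you yourself flag as the heart of the matter: the injectivity of $d_E\colon\wedge^rH^0(E)\to H^0(\mathcal O_X(1))$ for general $(X,E)$ with $g\gg r$. Reducing this to producing a single pair $(E_0,H^0(E_0))$ with $d_{E_0}$ injective is the right move (openness on an irreducible parameter space), and your observation that a line subbundle $L\subset E_0$ with $h^0(L)\geq r$ forces $\wedge^rH^0(L)\subseteq{\rm Ker}\,d_{E_0}$ correctly identifies why the statement is delicate. But the construction you then propose --- degenerating $X$ to a chain of low-genus curves and controlling $d_{E,S}$ by limit linear series, together with an appeal to an auxiliary parameter space that plays no role in this problem --- is not carried out and is not obviously executable: there is no off-the-shelf limit-linear-series machinery that tracks the rank of the determinant map $\wedge^rH^0\to H^0(\det)$ through a chain degeneration, so the promised ``finite computation on the components'' is not something you can currently reduce to. The paper's Section 6 does something concretely different: it proves by induction on $g$ that $g_E$ is an embedding with $\dim\langle\,g_E(X)\,\rangle\geq r(m-1)+g$, whence $d_E$ has rank at least $r(m-1)+g+1$ and is injective once $g\geq\binom{rm}{r}-r(m-1)-1$. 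The inductive step attaches to $C=g_E(X)$ a rational normal curve $R$ of degree $r$, chosen as a ruling curve of a Segre variety inside a sub-Grassmannian $\mathbb G_{x,y}\subset\mathbb G_E$ and not contained in $\langle\,C\,\rangle$ (Lemmas 6.6--6.8), then smooths the nodal curve $\Gamma=C\cup R$ inside the Grassmannian (Lemma 6.9) while preserving $h^1=0$, the existence of the Pl\"ucker form (hence semistability), and the lower bound on the span, by semicontinuity. Without an argument of this kind, or a worked-out substitute for your degeneration, the proof is incomplete at its decisive point.
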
 \par \noindent
To prove the theorem we need some preparation. At first we replace the space $SU_X(r,0)$ by a suitable translate of it, namely the moduli space
$$
\mathcal S_r
$$
of semistable vector bundles $E$ on $X$ having rank $r$ and fixed determinant $\mathcal O_X(1)$ of degree $r(m + g - 1)$.  We assume that $X$ has general moduli and that $\mathcal O_X(1)$ is general in ${\pic}^{r(m+g-1)} (X)$, with  $m \geq 3$ and $r \geq 2$. In particular $\mathcal O_X(1)$ is very ample: we also assume that $X$ is embedded in $\mathbb P^n$
by $ \mathcal O_X(1)$.  \par \noindent  We recall that $\mathcal S_r$ is biregular to $SU_X(r,0)$, the biregular  map being induced by  tensor product with an $r$-th root of $\mathcal O_X(-1)$.  \begin{proposition} Let $E$ be a semistable vector bundle on $X$ with general moduli in $\mathcal S_r$. Then: \begin{enumerate} \item $h^0(E)$ $= rm$  and
$(E,H^0(E))$ is a good pair,
\item the Pl\"ucker form of $(E, H^0(E))$ exists.
\end{enumerate}
\end{proposition}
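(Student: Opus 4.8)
The plan is to reduce the statement, via Riemann--Roch and semicontinuity, to a few vanishing statements that can be checked on a single conveniently chosen bundle. Since $\det E\cong\mathcal{O}_X(1)$ has degree $r(m+g-1)$, Riemann--Roch gives $\chi(E)=r(m+g-1)+r(1-g)=rm$, so $h^0(E)=rm$ is equivalent to $h^1(E)=0$. Consider, for a bundle $E$ on $X$ of determinant $\mathcal{O}_X(1)$, the properties: (a) $h^1(E)=0$; (b) $E$ is globally generated by $H^0(E)$; (c) the subset $\{D\in X^{(2)}:h^1(E(-D))>0\}$ of the second symmetric product is finite; (d) $H^0(E(-x_1-\cdots-x_m))=0$ for general $(x_1,\dots,x_m)\in X^m$. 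Each of (a)--(d) is an \emph{open} condition on $E$ in $\mathcal{S}_r$: (a) and (b) by semicontinuity of cohomology and closedness of the non--globally--generated locus; (c) by upper semicontinuity of the fibre dimension of $\{(E,D):h^1(E(-D))>0\}\to\mathcal{S}_r$ (using that $X^{(2)}$ is proper); (d) because, $X^m$ being proper, the locus of $E$ for which $H^0(E(-D))\ne 0$ for \emph{every} $D\in X^m$ is closed. Moreover a bundle $E$ satisfying (a)--(d) yields both assertions of the Proposition: conditions (i),(ii) of the definition of a good pair hold by construction of $\mathcal{S}_r$, (iii) is (a) together with Riemann--Roch, (iv) is (b); for (v), (b) makes the classifying map $g_{E,H^0(E)}$ a morphism, and if $g_{E,H^0(E)}(x)=g_{E,H^0(E)}(y)$ with $x\ne y$ then $H^0(E(-x))=H^0(E(-y))$, forcing $h^0(E(-x-y))=h^0(E(-x))=rm-r$ and hence $h^1(E(-x-y))=r>0$, so by (c) this happens for only finitely many $x$; thus $g_{E,H^0(E)}$ is injective off a finite set and, $X$ being a smooth irreducible curve, is birational onto its image. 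Finally (d) says $ev_{E,H^0(E)}$ is an isomorphism at some point of $X^m$, i.e. the Pl\"ucker form of $(E,H^0(E))$ exists. Since $\mathcal{S}_r$ is biregular to $SU_X(r,0)$, hence irreducible, it suffices to exhibit one bundle $E_0$ defining a point of $\mathcal{S}_r$ and satisfying (a)--(d).

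For $E_0$ I take $L^{\oplus r}$, where $L$ is an $r$-th root of $\mathcal{O}_X(1)$; since $\mathcal{O}_X(1)$ is general, $L$ is a general line bundle of degree $d:=m+g-1$, and $E_0$ is a semistable bundle of determinant $\mathcal{O}_X(1)$. Because $h^i(E_0(-D))=r\,h^i(L(-D))$ for every effective $D$, each of (a)--(d) for $E_0$ reduces to a statement about the general line bundle $L$ on the general curve $X$, which I verify by Abel--Jacobi dimension counts. (a): a general line bundle of degree $d\ge g$ is nonspecial, so $h^1(L)=0$, hence $h^1(E_0)=0$ and $h^0(E_0)=r\,h^0(L)=rm$. (b): $E_0$ is globally generated iff $L$ is, i.e. iff $h^1(L(-x))=0$ for all $x\in X$; the curve $\{L(-x):x\in X\}\subset\mathrm{Pic}^{d-1}$ is a translate of the Abel curve, the locus of special line bundles there has dimension $\le g-m$, and $1+(g-m)<g$ since $m\ge 2$, so for general $L$ the two are disjoint. (c): the surface $\{L(-D):D\in X^{(2)}\}\subset\mathrm{Pic}^{d-2}$ has dimension $2$ and the locus of special line bundles in $\mathrm{Pic}^{d-2}$ has dimension $\le g-m+1$, so for general $L$ this translated surface meets it in dimension $2+(g-m+1)-g=3-m\le 0$ (here $m\ge 3$); pulling back along the generically finite Abel map $X^{(2)}\to\mathrm{Pic}^2$, the set $\{D\in X^{(2)}:h^1(L(-D))>0\}$ is finite. (d): $h^0(L)=d-g+1=m$ by (a), so imposing $m$ general points gives $h^0(L(-x_1-\cdots-x_m))=0$, hence $H^0(E_0(-x_1-\cdots-x_m))=0$; and since $\dim H^0(E_0)=rm=\dim\bigoplus_i(E_0)_{x_i}$, the map $ev_{E_0,H^0(E_0)}$ is an isomorphism at $(x_1,\dots,x_m)$.

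Assembling: by openness together with the irreducibility of $\mathcal{S}_r$, properties (a)--(d) — just established for $E_0$ — hold for every $E$ whose moduli point lies in a fixed dense open subset of $\mathcal{S}_r$, and for such $E$ both parts of the Proposition follow from the first paragraph. The genuinely delicate point is the package of Brill--Noether estimates for $L$ in (b) and (c): one has to check that the relevant translated Abel loci are not contained in, and — in (c) with $m=3$ — meet in the expected dimension zero, the locus of special line bundles; in that borderline case the naive dimension count barely closes, so a moment's care is needed there, while the rest is routine semicontinuity bookkeeping.
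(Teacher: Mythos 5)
Your proposal is correct and follows the same overall skeleton as the paper: both reduce everything to open conditions on $\mathcal{S}_r$ (using its irreducibility) and then verify those conditions on the single split bundle $E_0=L^{\oplus r}$ with $L$ a general line bundle of degree $m+g-1$, for which all the cohomology is $r$ copies of that of $L$. The one genuine divergence is in how the delicate clause (v) of Definition 4.1 --- that the classifying map is birational onto its image --- is handled. The paper argues geometrically and only for $E_0$: it factors $g_{E_0}=u\circ(f_1\times\dots\times f_r)$, where each $f_i$ is the birational morphism given by $|L|$ and $u$ is the span map to the Grassmannian, and then invokes openness of the good-pair conditions. You instead introduce the auxiliary condition that $\{D\in X^{(2)}:h^1(E(-D))>0\}$ be finite, show it is open via properness of $X^{(2)}$ and check it for $E_0$ by a Brill--Noether dimension count, and then deduce birationality for the \emph{general} $E$ from the observation that $g_E(x)=g_E(y)$ forces $h^1(E(-x-y))=r>0$. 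Your route costs an extra dimension count (and, as you note, a little care at the borderline $m=3$ and for small genus where the Abel map $X^{(2)}\to\mathrm{Pic}^2$ has positive-dimensional fibres), but it buys something the paper leaves implicit: it replaces the assertion that ``birational onto its image'' is itself an open condition in the family by a manifestly open cohomological condition that implies it. Both arguments are sound; yours is marginally more self-contained on that point.
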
 
\begin{proof} (i) It suffices to produce \it one \rm semistable vector bundle $E$ on $X$,   of degree $r(m + g - 1)$ and rank $r$, such that $h^0(E) = rm$ and $(E,H^0(E))$ is a good pair in the sense of Definition 4.1. Then the statement follows because the conditions defining a good pair are open. Let $L \in {\pic}^{m + g - 1}(X)$ be general, then $h^0(L) = m$ and  $L$ is globally generated. Since $m \geq 3$,  $L$ defines a morphism birational onto its image $$f: X \to {\mathbb P}( H^0(L)^*).$$ Putting $E := L^{\oplus r}$ we have  a globally generated, semistable vector bundle such that $h^0(E) = rm$.  Hence, to prove that $(E, H^0(E))$  is a good pair,  it remains to show that its classifying map  
$$ g_E: X \to {\mathbb G}_E \colon = G(r, H^0(E)^*)$$ 
  is birational onto its image. We  observe  that $H^0 (E) = H_1 \oplus \dots 	\oplus H_r$, where $H_i$ is just a copy of $H^0(L), \ i = 1, \dots , r.$ Let $f_i: X \to {\mathbb P}(H^*_i)$ be the corresponding copy of $f$, for any $i =1, \cdots , r$.  Then $g_E: X \to {\mathbb G}_E$ can be described as follows: let $\mathbb P (E^*_x) \subset \mathbb P (H^0(E)^*)$  be the linear embedding induced by the evaluation map, it turns out that $\mathbb P(E^*_x)$ is the linear span of $f_1(x), \dots, f_r(x)$. This implies that $g_E = u \circ  (f_1 \times \dots \times f_r)$, where
$$
u: \mathbb P(H_1^*) \times \cdots \times \mathbb P(H_r^*) \to {\mathbb G}_E
$$
is the rational map sending $(y_1, \dots, y_r)$ to the linear span of the points $y_i \in \mathbb P(H_i^*) \subset \mathbb P(H^0(E)^*)$, $i = 1, \dots , r.$ Since $f$ is birational onto its image, the same is true for the map $f_1 \times \cdots \times f_r$. Moreover $u$ is clearly birational onto its image. Hence $g_E$ is birational onto its image. Finally $g_E$ is a morphism, since $L^{\oplus r}$ is globally generated. This completes the proof of (i). \par \noindent (ii) Again it suffices to produce \it one \rm good pair $(E,H^0(E))$ with the required property. It is easy to see that this is the case if $E = L^{\oplus r}$ as in (i). \end{proof} \par \noindent
Now we consider the rational map
$$
\theta_{r,m}: \mathcal S_r \to \mid \mathcal O_X(1, \dots, 1) \mid
$$
sending the moduli point  $[E] \in \mathcal S_r$ of a general $E$ to the Pl\"ucker form
$$
\mathbb D_E \in \mid \mathcal O_X(1, \dots, 1) \mid
$$
of the pair $(E, H^0(E))$. Let $t \in {\pic}^{m + g - 1}(X)$ be an $r$-root of $\mathcal O_X(1)$, then we have a map
$$
a_t: X^m \to {\pic}^{g-1}(X)
$$
sending $(x_1, \dots, x_m)$ to $\mathcal O_X(t - x_1 - \dots - x_m )$. It  is just the natural Abel map $a \colon X^m \to {\pic}^m(X)$, multiplied by $-1$ and composed with the tensor product by $t$. Fixing a Poincar\'e bundle $\mathcal P$ on $X \times {\pic}^{g-1}(X)$ we have the sheaf
$$
R^1 q_{2*}(q_1^*E(-t) \otimes \mathcal P),
$$
where $q_1, q_2$ are the natural projection maps of $X \times {\pic}^{g-1}(X)$. It is  well known the support of  this sheaf is either ${\pic}^{g-1}(X)$ or a Cartier divisor $\Theta_E$,  see \cite{key4}.  Moreover, due to the choice of $t$, one has
$$
\Theta_E \in \mid r\Theta \mid,
$$
where $\Theta := \lbrace N \in {\pic}^{g-1}(X) \ \mid \ h^0(N) \geq 1 \rbrace$ is the natural theta divisor of ${\pic}^{g-1}(X)$. In particular, one has $h^0(E \otimes N(-t)) = h^1(E \otimes N(-t))$ so  that
$$
{\Supp} \ \Theta_E = \lbrace N \in {\pic}^{g-1}(X) \ \mid \ h^0(E \otimes N(-t)) \geq 1 \rbrace.
$$
Finally, it is well known that there exists a suitable identification 
$$
\mid r\Theta \mid = \mathbb P(H^0(\mathcal L)^*)
$$
such that $\theta_r([E]) = \Theta_E$, \cite{key4}. Computing Chern classes it follows 
$$
a_t^* \Theta_E + r\Delta_m \in \mid \mathcal O_{X^m}(1, \dots, 1) \mid,
$$
where $\Delta_m \subset X^m$ is the multidiagonal divisor. On the other hand, $r \Delta_m$ is a component of
$\mathbb D_E$ by Proposition 4.8. Moreover, it follows from the definition of determinant divisor that $\mathbb D_E$
contains $a_t^{-1}(\Theta_E)$. Therefore we have \begin{equation}
a_t^* \Theta_E + r\Delta_m = \mathbb D_E.
\end{equation}
Let
$
\alpha: \mid r\Theta \mid \to \mid \mathcal O_{X^m}(1, \dots, 1) \mid
$
be the linear map sending $D \in \mid r\Theta \mid$ to $a_t^* D + r\Delta_m$. We  conclude from the latter equality that

\begin{proposition} $\theta_{r,m}$ factors through the theta map $\theta_r$, that is  \ $\theta_{r,m} = \alpha \circ  \theta_r$.
 \end{proposition}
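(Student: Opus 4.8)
The plan is to obtain the proposition formally from the equality $a_t^*\Theta_E+r\Delta_m=\mathbb D_E$ just established, by unwinding the definitions of the three maps $\theta_{r,m}$, $\theta_r$ and $\alpha$. First I would fix a dense open subset $U\subset\mathcal S_r$ of moduli points $[E]$ over which everything used is legitimate: by Proposition 5.2 we may take $U$ so that $h^0(E)=rm$, the pair $(E,H^0(E))$ is good, and its Pl\"ucker form $\mathbb D_E=\mathbb D_{E,H^0(E)}$ exists; we may shrink $U$ further so that in addition $\Theta_E$ is an honest divisor not containing the image of the Abel map $a_t$ and the coefficient $\epsilon$ of Proposition 4.8 vanishes (this last being exactly the remark verified on $E=L^{\oplus r}$, $S=H^0(E)$). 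On $U$ the map $\theta_{r,m}$ is defined and sends $[E]$ to $\mathbb D_E$, while $\theta_r$ is defined and, under the identification $\mid r\Theta\mid=\mathbb P(H^0(\mathcal L)^*)$ of \cite{key4}, sends $[E]$ to $\Theta_E$.

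Next I would simply compute, for $[E]\in U$,
$$
(\alpha\circ\theta_r)([E])=\alpha(\Theta_E)=a_t^*\Theta_E+r\Delta_m=\mathbb D_E=\theta_{r,m}([E]),
$$
the third equality being precisely the identity established above and the others being the definition of $\alpha$, the formula for $\theta_r$ on $U$, and the definition of $\theta_{r,m}$. Since rational maps from $\mathcal S_r$ agreeing on a dense open subset coincide, this yields $\theta_{r,m}=\alpha\circ\theta_r$.

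I do not expect a genuine obstacle: the real work was already done before the statement, and the proposition is bookkeeping. If one wanted to reprove the key equality $a_t^*\Theta_E+r\Delta_m=\mathbb D_E$ from scratch — the only place where something actually happens — the steps would be: a Chern-class computation placing $a_t^*\Theta_E+r\Delta_m$ and $\mathbb D_E$ in the same linear system $\mid\mathcal O_{X^m}(1,\dots,1)\mid$; Proposition 4.8 showing that $r\Delta_m$ is a component of $\mathbb D_E$ of multiplicity exactly $r$ for general $E$; the identification of a nonzero section of $E$ vanishing at $x_1,\dots,x_m$ with a nonzero section of $E\otimes\mathcal O_X(t-x_1-\dots-x_m)(-t)$, which gives $\mathbb D_E\supseteq a_t^{-1}(\Theta_E)$; and finally the irreducibility of the residual divisor $\mathbb D_E-r\Delta_m$ for general $E$, which upgrades the resulting inclusion of supports to an equality of effective divisors. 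The single point in the proof of the proposition proper that deserves an explicit line is therefore only that the open set $U$ is nonempty, which is Proposition 5.2 together with that auxiliary computation.
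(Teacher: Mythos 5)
Your proposal is correct and matches the paper exactly: the paper gives no separate proof of this proposition, deriving it immediately from the displayed identity $a_t^*\Theta_E + r\Delta_m = \mathbb D_E$ together with the definitions of $\alpha$, $\theta_r$ and $\theta_{r,m}$, which is precisely your computation on a dense open set of $\mathcal S_r$. Your closing sketch of how that identity is itself established also mirrors the paper's derivation (same linear system by a Chern class computation, $r\Delta_m$ as a component via Proposition 4.8, and the inclusion $a_t^{-1}(\Theta_E)\subseteq\mathbb D_E$), so nothing further is needed.
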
 \medskip  \par \noindent
\begin{proof} [\underline {Proof of Theorem 5.1}] \ \par \noindent  Let $\theta_{r,m}: \mathcal S_r \to \mid \mathcal O_{X^m}(1, \dots, 1) \mid$ be as above. We have ${\Aut} (X) = 1$ and $m \geq 3$. We know that $(E,H^0(E))$ is a good pair if $[E] \in \mathcal S_r$ is general and that $\theta_{r,m}$ factors through the theta map $\theta_r$. Theorem 4.13 says that $\theta_{r,m}$ is generically injective if  $(E, H^0(E))$ is a good pair and the  determinant map
$$
d_E: \wedge^r H^0(E) \to H^0(\mathcal O_X(1))
$$
is injective for a general $[E]$. This is   proved in the next section.
\end{proof} 
 
 \section{ The injectivity of the determinant map} \noindent 
Let $(X,E)$ be a pair such that $X$ is a smooth irreducible curve of genus $g$ and $E$ is a semistable vector bundle of rank $r$ on $X$ and degree  $  r(g -1 +m) $, with $m \geq 3$.
If $E$ is a general semistable vector bundle on $X$, it follows that:
\begin{enumerate} \it
\item{} $(E,H^0(E))$ is a good pair,
\item{} its Pl\"ucker form exists.
\end{enumerate}
(see Definition 4.1 and Proposition 5.2). It is therefore clear that the previous conditions are satisfied on a dense open set $U$ of the moduli space of $(X,E)$. \it  \par \noindent \underline { Assumption: }\rm From now on we will assume that
$(X,E)$ defines a point of $U$, so that $X$ is a general curve of genus $g$ and $E$ is semistable and satisfies (i) and (ii). \rm  
\medskip  \par \noindent
In this section we  prove the following result:
\begin{theorem} Let $X$ and $E$ be sufficiently general and  $g >> r$, then:
\begin{enumerate} \item  the determinant map $d_E: \wedge^r H^0(E) \to H^0({\det} \ E)$ is injective, \item the classifying map $g_E: X \to \mathbb G_E$ is an embedding. 
\end{enumerate}
\end{theorem}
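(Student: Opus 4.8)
The plan is to reduce both statements to a single degeneration by openness, and then to argue by induction on the rank $r$. Since $\dim\ker d_E$ is upper semicontinuous and the conditions ``$g_E$ is injective'' and ``$g_E$ is unramified'' are open over the relative moduli space of pairs $(X,E)$, it suffices to produce, for each $g\gg r$, one pair $(X,E)$ for which $d_E$ is injective and $g_E$ is an embedding; a general pair then inherits both properties. I would also record at the outset the reformulation of (i): for $0\neq\xi\in\wedge^rH^0(E)$ one has $d_E(\xi)=0$ exactly when the classifying curve $g_E(X)\subseteq\mathbb G_E\subseteq\mathbb P(\wedge^rH^0(E)^*)=\mathbb P^N$, with $N=\binom{rm}{r}-1$, lies in the hyperplane cut by $\xi$; hence (i) is equivalent to the nondegeneracy of $g_E(X)$ in $\mathbb P^N$.

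For the degeneration, let $L$ be a general line bundle of degree $g-1+m$ (so $h^0(L)=m$, $h^1(L)=0$) and let $W$ be a general stable bundle of rank $r-1$ and degree $(r-1)(g-1+m)$ with $L\otimes\det W\cong\mathcal O_X(1)$; by the inductive hypothesis $d_W$ is injective, $g_W$ is an embedding, $h^0(W)=(r-1)m$, $h^1(W)=0$ (for $r=2$ one takes $W=L_2$ a second general line bundle of degree $g-1+m$, the base $r=1$ being trivial since $d_L=\mathrm{id}$). Set $E_0:=L\oplus W$: then $h^0(E_0)=rm$, $h^1(E_0)=0$, and with $A=H^0(L)$, $B=H^0(W)$ and $\wedge^rH^0(E_0)=\bigoplus_a\wedge^aA\otimes\wedge^{r-a}B$ the determinant map $d_{E_0}$ annihilates every summand with $a\geq 2$ (a determinant with two columns from a line bundle vanishes) and with $a=0$, and on the summand $a=1$ it is the composite $A\otimes\wedge^{r-1}B\xrightarrow{\,1\otimes d_W\,}A\otimes H^0(\det W)\to H^0(L\otimes\det W)$ assembled from $d_W$ and multiplication by $L$; thus $\ker d_{E_0}$ is explicit, controlled by the (inductively trivial) kernel of $d_W$ and by the kernel of the multiplication map of $L$ with $\det W$, which has maximal rank for general $L$. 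Moreover $g_{E_0}=(g_L,g_W)$ sends $X$ into the sublocus $\mathbb P(A^*)\times G(r-1,B^*)$ of split $r$-planes inside $\mathbb G_{E_0}$, and since $g_W$ is already an embedding (respectively, since for two general line bundles of degree $\geq g+2$ the morphisms $\phi_{L_1},\phi_{L_2}$ defined by $|L_i|$ are immersions identifying different generic pairs of points), $g_{E_0}$ is an embedding.

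I would then deform $E_0$ to a general $E$ in $SU_X(r,\mathcal O_X(1))$: a generic deformation is stable, still satisfies $h^0(E)=rm$, $h^1(E)=0$ and gives a good pair, and by openness $g_E$ is still an embedding. The heart of the matter — and the step I expect to be the main obstacle — is to show that a general such deformation annihilates $\ker d_{E_0}$, i.e.\ $\ker d_E=0$. First, no decomposable element survives: $s_1\wedge\cdots\wedge s_r\in\ker d_E$ forces $s_1,\dots,s_r$ to span a sub-bundle $F\subsetneq E$ of rank $<r$ with $h^0(F)\geq r$, and standard bounds on the degrees of sub-bundles of a general stable $E$ (of Lange type), together with a Brill--Noether count excluding special sub-bundles with too many sections, rule this out once $g\gg r$. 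Second, for the possibly indecomposable remainder one carries out a first-order Petri-type computation: one expresses the derivative of $d_E$ along a deformation class $\eta\in\Ext^1(E_0,E_0)=H^1(\mathcal{E}nd\,E_0)$ as a cup product with $\eta$, and checks that the induced linear maps $\ker d_{E_0}\to H^0(\det E_0)$, as $\eta$ varies over $\Ext^1(E_0,E_0)$, have zero common kernel, using the inductive injectivity of $d_W$, the maximal rank of the multiplication map, and the genericity of $L$ and $W$. Upper semicontinuity then forces $\ker d_E=0$, which is (i), and closes the induction.

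Thus (i) holds for general $(X,E)$ with $g\gg r$, and (ii) holds as well since $g_{E_0}$ is an embedding and being a closed embedding is an open condition in the family of deformations; the openness reduction of the first paragraph then transports both conclusions to a general smooth pair. I expect the technical weight of the section to lie almost entirely in the Petri-type computation above — making precise the derivative of the determinant map along deformations of $E_0=L\oplus W$ and verifying its nondegeneracy on $\ker d_{E_0}$ — together with the degree bounds for sub-bundles of a general stable bundle that dispose of the decomposable part.
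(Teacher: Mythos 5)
Your overall strategy (reduce to exhibiting one pair by semicontinuity, and reformulate (i) as nondegeneracy of $g_E(X)$ in ${\mathbb P}(\wedge^r H^0(E)^*)$) is sound and matches the paper's starting point, but the route you take afterwards -- induction on the rank $r$ via the split bundle $E_0 = L\oplus W$ -- has a genuine gap exactly where you locate ``the heart of the matter.'' The kernel of $d_{E_0}$ is enormous: it contains every summand $\wedge^a A\otimes\wedge^{r-a}B$ with $a\neq 1$, so $\dim\ker d_{E_0}\geq \binom{rm}{r}-m\binom{(r-1)m}{r-1}$. You propose to kill it by a first-order computation along $\eta\in \Ext^1(E_0,E_0)$, and your stated criterion is that the induced maps $D_\eta\colon \ker d_{E_0}\to \mathrm{coker}\, d_{E_0}$ have zero \emph{common} kernel. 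That criterion is logically insufficient: $\bigcap_\eta\ker D_\eta=0$ does not imply that $D_\eta$ is injective for a single generic $\eta$ (each $D_\eta$ could have large kernel while the kernels vary), and only injectivity of some $D_\eta$ on all of $\ker d_{E_0}$ would let semicontinuity conclude $\ker d_{E_t}=0$. Establishing that injectivity is a Petri-type statement of essentially the same difficulty as the theorem itself, and you give no argument for it; the auxiliary claim that the multiplication map is of maximal rank on $A\otimes \mathrm{Im}(d_W)$ is also unsupported, since $\mathrm{Im}(d_W)$ is a special, not generic, subspace of $H^0(\det W)$. The ``decomposable elements'' step is likewise only sketched. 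So the proposal is a plausible program, not a proof.

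For comparison, the paper avoids any Petri computation by inducting on the \emph{genus} rather than the rank, and by proving only a lower bound on the rank of $d_E$ that grows with $g$: Theorem 6.3 shows $\dim\langle g_E(X)\rangle\geq r(m-1)+g$, whence injectivity of $d_E$ once $g\geq\binom{rm}{r}-r(m-1)-1$ (Corollary 6.4) -- this is where the hypothesis $g\gg r$ is actually used, quantitatively. The inductive step attaches to $C=g_E(X)$ a rational normal curve $R$ inside a sub-Grassmannian $\mathbb G_{x,y}\subset\mathbb G_E$ chosen (Lemma 6.7) so that $\langle\mathbb G_{x,y}\rangle\not\subset\langle C\rangle$; the nodal curve $\Gamma=C\cup R$ has arithmetic genus $g+1$ and span of dimension at least one more, and is then smoothed inside $\mathbb G_E$ (Lemma 6.9), with $h^1=0$, $h^0=rm$ and semistability of the smoothed bundle controlled by the existence of the Pl\"ucker form (Lemmas 6.10, 6.11). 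Each step of that induction is an explicit geometric verification, with no nondegeneracy of a cup-product map required. If you want to salvage your rank induction, you would need to actually prove the injectivity of $D_\eta$ for some $\eta$ (or replace the first-order argument by an exact degeneration in which the rank of the determinant map is computed, as the paper does), and that is where all the work lies.
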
 \par \noindent
 Since $m \geq 3$, ${\rm det} \ E := \mathcal O_X(1)$ is very ample. So we  will assume as usual that the curve $X$ is embedded in $\mathbb P^n = {\mathbb P}(H^0(\mathcal O_X(1))^*)$. Let us also recall that
$$
\mathbb  G_E \subset \mathbb P^{\binom {rm}r - 1}
$$
denotes the Pl\"ucker embedding of the Grassmannian $G(r,H^0(E)^*)$. Let
$$
\lambda_E: \mathbb P^n \to \mathbb P^{\binom{rm}r - 1}
$$
be the projectivized dual of $d_E$. We have already remarked in section 4 that $g_E$ is just the restriction ${\lambda_E}_{{\vert}_ {\   X}}$. This immediately implies that
\begin{lemma} $d_E$ is injective $\Leftrightarrow$ $\lambda_E$ is surjective $\Leftrightarrow$ the curve $g_E(X)$ spans the Pl\"ucker space $\mathbb P^{\binom{rm}r -1}$.
\end{lemma}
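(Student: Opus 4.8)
The plan is to reduce both biconditionals to a single elementary fact of linear algebra, namely that a linear map of finite-dimensional vector spaces is injective if and only if its transpose is surjective. First I would fix notation by writing $V = H^0(E)^*$, so that $\wedge^r V = (\wedge^r H^0(E))^*$ and $\mathbb P^{\binom{rm}{r}-1} = \mathbb P(\wedge^r V)$, and recall that $\lambda_E$ is \emph{by definition} the projectivization of the transpose $d_E^* \colon H^0(\mathcal O_X(1))^* \to \wedge^r V$. Being the projective linear map attached to $d_E^*$, the image of $\lambda_E$ is the linear subspace $\mathbb P(\mathrm{Im}\, d_E^*)$, and this is all of $\mathbb P(\wedge^r V)$ precisely when $d_E^*$ is surjective, that is, precisely when $d_E$ is injective. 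This yields the first equivalence at once.

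For the equivalence with the spanning condition I would use that $g_E = \lambda_E|_X$ and that $g_E$ is the morphism attached to the base-point-free subsystem $\mathrm{Im}\, d_E \subseteq H^0(\mathcal O_X(1))$, as recorded in Section 4. The heart of the matter is the identity $g_E^*\phi = d_E(\phi)$ for a linear form $\phi \in \wedge^r H^0(E)$ viewed as a hyperplane of $\mathbb P(\wedge^r V)$: such a hyperplane contains $g_E(X)$ if and only if $d_E(\phi)$ vanishes identically on $X$, i.e. if and only if $\phi \in \ker d_E$. Hence the hyperplanes through $g_E(X)$ are parametrized exactly by $\mathbb P(\ker d_E)$, so $g_E(X)$ is nondegenerate in the Pl\"ucker space if and only if $\ker d_E = 0$, i.e. if and only if $d_E$ is injective. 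Equivalently — and this is the formulation I would probably present — since $X \subset \mathbb P^n$ is linearly normal and nondegenerate it spans $\mathbb P^n$, and a projective linear map carries the span of a set to the span of its image; applying this to $\lambda_E$ gives $\mathrm{span}(g_E(X)) = \lambda_E(\mathbb P^n) = \mathbb P(\mathrm{Im}\, d_E^*)$, so that $g_E(X)$ spans the Pl\"ucker space exactly when $\lambda_E$ is surjective. Combining with the first paragraph closes the chain of equivalences.

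The statement is essentially bookkeeping of dualities, so there is no serious obstacle; the one point demanding care is keeping the three dualizations consistent — between $d_E$ and $d_E^*$, between vanishing of a section of $\mathcal O_X(1)$ and vanishing of the associated hyperplane pullback, and the identification $\wedge^r V = (\wedge^r H^0(E))^*$. The only genuinely non-formal ingredient is the nondegeneracy of $X$ in $\mathbb P^n$, which is exactly what converts the statement ``$g_E$ is defined by the subsystem $\mathrm{Im}\, d_E$'' into the clean identity $\mathrm{span}(g_E(X)) = \mathrm{Im}\,\lambda_E$ used above.
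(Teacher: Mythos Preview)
Your argument is correct and complete. The paper states this lemma without proof, treating it as immediate from the definitions recalled just before it (that $\lambda_E$ is the projectivized dual of $d_E$ and that $g_E = \lambda_E|_X$); your write-up simply unpacks those dualities carefully, which is exactly what is needed.
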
 \noindent
Since $(E, H^0(E))$ is a good pair, $g_E: X \to g_E(X)$ is a birational morphism. Let
$$
\langle \ g_E(X) \ \rangle \subset \mathbb P^{ \binom {rm}r - 1}
$$
be the linear span of $g_E(X)$. Then the previous Theorem 6.1  is an immediate consequence of the following one:
\begin{theorem} For a general pair $(X,E)$ as above $g_E$ is an embedding and $${\dim} \ \langle \ g_E(X) \ \rangle    \geq \ \ r(m-1) + g.$$
 \end{theorem}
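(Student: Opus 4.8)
The plan is to degenerate to an explicit, combinatorially tractable pair and argue by semicontinuity. Since the conclusion of Theorem~6.3 is open in the moduli of pairs $(X,E)$, it suffices to produce \emph{one} general curve $X$ of genus $g$ together with \emph{one} stable bundle $E$ of rank $r$ and degree $r(g-1+m)$ for which $g_E$ is an embedding and $\dim\langle g_E(X)\rangle\ge r(m-1)+g$. By Lemma~6.2 the span condition is equivalent to injectivity of $d_E$, and, after the identifications of Section~5, to the statement that $g_E(X)$ spans a space of the asserted dimension inside the Pl\"ucker space $\mathbb P^{\binom{rm}{r}-1}$. A natural candidate to start from is a direct sum $E_0=L_1\oplus\cdots\oplus L_r$ of general line bundles $L_i$ of degree $g-1+m$ on $X$; here $h^0(E_0)=rm$ and, as in Proposition~5.2, $g_{E_0}$ is the composite of $f_1\times\cdots\times f_r$ (the product of the maps $f_i\colon X\to\mathbb P(H_i^*)$ given by $|L_i|$) with the span map $u$ into the Grassmannian. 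For $g\gg r$ each $f_i$ is an embedding of a non-hyperelliptic curve, the images are in general position, and one can compute $\dim\langle g_{E_0}(X)\rangle$ by a filtration/Koszul argument on $\wedge^r(H_1\oplus\cdots\oplus H_r)=\bigoplus_{|J|=r}\bigotimes_{j\in J}H_j$: the span of $g_{E_0}(X)$ meets the summand $H_1\otimes\cdots\otimes H_r$ in (the image of) the multiplication $H^0(L_1)\otimes\cdots\otimes H^0(L_r)\to H^0(L_1\otimes\cdots\otimes L_r)$, and one estimates these ranks by base-point-free pencil trick / Riemann--Roch once $g$ is large compared to $r$ and $m$.

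First I would set up the precise numerology: with $\deg L_i=g-1+m$, $h^0(L_i)=m$, and $\det E_0$ has degree $r(g-1+m)$, so $h^0(\det E_0)=r(g-1+m)-g+1=r(m-1)+(r-1)(g-1)+ \dots$ — I will recompute this carefully, since the target dimension $r(m-1)+g$ in the statement must come out as a lower bound for the rank of $d_{E_0}$ (or of a nearby deformation). If the split bundle $E_0$ does not by itself achieve the bound — direct sums tend to have smaller section-spanned spans than general stable bundles — I would instead take a general elementary modification or a general extension built from the $L_i$, i.e.\ a stable $E$ fitting in $0\to L_1\to E\to E'\to 0$ with $E'$ of rank $r-1$, and induct on $r$: the long exact sequence in cohomology together with the inductive hypothesis for $E'$ controls $h^0(E)$ and the image of $d_E$, via the wedge decomposition $\wedge^r H^0(E)=\bigl(H^0(L_1)\otimes\wedge^{r-1}H^0(E')\bigr)\oplus\wedge^r H^0(E')$ compatibly with the determinant maps $d_E$, $d_{E'}$ and multiplication by $H^0(L_1)$.

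The key steps, in order: (1) reduce by openness to exhibiting one good pair $(X,E)$; (2) choose the model bundle ($E_0$ split, or an iterated extension of line bundles) on a general curve and record its cohomology; (3) prove $g_E$ is an embedding — for the split case this is immediate from Proposition~5.2 once each $f_i$ is an embedding and the $L_i$ are sufficiently general, and in the extension case it follows because $g_E$ separates points and tangent vectors whenever $\lambda_E$ does, which is controlled by the span; (4) bound $\dim\langle g_E(X)\rangle$ from below by $r(m-1)+g$ via the wedge-decomposition and multiplication-map estimates, using $g\gg r$; (5) invoke Lemma~6.2 to translate the span bound into injectivity of $d_E$, and conclude Theorem~6.3 and hence (via Theorem~6.1 and the reduction in Section~5) the Main Theorem. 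The main obstacle I expect is step~(4): showing the span is \emph{large enough}, i.e.\ that the various multiplication maps $H^0(L_{j_1})\otimes\cdots\otimes H^0(L_{j_k})\to H^0(L_{j_1}\otimes\cdots\otimes L_{j_k})$ contribute enough independent directions. This is exactly where the hypothesis $g\gg r$ is forced: for $g$ large relative to $r$ (and $m\ge 3$ fixed) general line bundles of this degree have surjective, or at least large-rank, multiplication maps (base-point-free pencil trick, or a degeneration of the $L_i$ to general effective divisors plus a dimension count on $X^{(d)}$), and assembling these across all index subsets $J$ gives the bound. Getting a clean, self-contained proof of that multiplication-rank estimate — rather than quoting a Brill--Noether-type result — is the technical heart of the argument.
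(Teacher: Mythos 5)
There is a genuine gap, and it is located exactly where you predicted the ``technical heart'' would be, but it is worse than a missing estimate: your primary candidate provably cannot work. For the split bundle $E_0=L_1\oplus\cdots\oplus L_r$ the determinant map $d_{E_0}(s_1\wedge\cdots\wedge s_r)=\det\bigl(s_i^{(j)}\bigr)$ annihilates every summand of $\wedge^r\bigl(H^0(L_1)\oplus\cdots\oplus H^0(L_r)\bigr)$ except the multilinear one $H^0(L_1)\otimes\cdots\otimes H^0(L_r)$ (any wedge containing two sections supported in the same summand $L_i$ produces a matrix with two proportional-support rows, hence zero determinant), and on that summand it is precisely the multiplication map into $H^0(L_1\otimes\cdots\otimes L_r)$. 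Therefore $\operatorname{rank} d_{E_0}\leq m^r$, a bound \emph{independent of $g$}, whereas the target $r(m-1)+g+1$ grows linearly in $g$. So for $g\gg 0$ no analysis of multiplication maps, base-point-free pencil tricks, or general position of the $f_i(X)$ can rescue the split model: it is not that direct sums ``tend to have smaller spans,'' they are uniformly bounded. The fallback via iterated extensions $0\to L_1\to E\to E'\to 0$ with an induction on $r$ is not developed and faces the same obstruction: the induced filtration of $\wedge^r H^0(E)$ has associated graded pieces on which $d_E$ is again controlled by multiplication maps of the $L_i$, and lower semicontinuity of rank only bounds the general $E$ from below by the (too small) split value. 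Note also that an induction on $r$ has no mechanism to produce the summand $g$ in the bound $r(m-1)+g$, which does not change as $r$ grows.

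The paper avoids fixing the curve altogether and instead inducts on the \emph{genus}. The base case is $X=\mathbb P^1$, $E=\mathcal O_{\mathbb P^1}(m-1)^{\oplus r}$, where $d_E$ is onto $H^0(\mathcal O_{\mathbb P^1}(r(m-1)))$, giving span $r(m-1)$. For the inductive step one takes $C=g_E(X)$ of genus $g$ with $\dim\langle C\rangle\geq r(m-1)+g$, chooses general $x,y\in X$ so that (Lemma 6.7) the sub-Grassmannian $\mathbb G_{x,y}=G(r,E^*_x\oplus E^*_y)$ is not contained in $\langle C\rangle$, picks inside it a degree-$r$ rational normal curve $R$ through $x$ and $y$ not lying in $\langle C\rangle$ (Lemma 6.8), and forms the nodal curve $\Gamma=C\cup R$ of arithmetic genus $g+1$ with $\dim\langle\Gamma\rangle\geq\dim\langle C\rangle+1$. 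One then smooths $\Gamma$ inside the Grassmannian (Lemma 6.9, via $h^1(\mathcal T_{\mathbb G\vert\Gamma})=0$), checks $h^0(F)=rm$, $h^1(F)=0$ for $F=\mathcal U_r^*\otimes\mathcal O_\Gamma$ (Lemma 6.10), and verifies semistability of the smoothed bundle through the existence of a theta divisor (Lemma 6.11). It is this genus induction that generates the ``$+g$'' in the bound, one unit per attached rational curve; any argument confined to a fixed curve with bundles built from line bundles of fixed $h^0=m$ will run into the $m^r$ ceiling described above.
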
 \noindent
In other words, the statement says that $g_E$ is an embedding and that $d_E$ has rank $> r(m-1) + g$. This theorem and the previous lemma  imply that:
\begin{corollary} For a general  $(X,E)$,  $d_E$ is injective if $g  \geq  \binom{rm}r - r(m-1) -1$.
\end{corollary}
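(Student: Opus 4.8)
The plan is to read the corollary as an immediate numerical consequence of Theorem 6.4 together with the equivalences in Lemma 6.3, the genuine work having already been done in Theorem 6.4.

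First I would pin down the dimension of the Pl\"ucker space. For $(X,E)$ in the dense open locus $U$ fixed by the Assumption of this section — so that $(E,H^0(E))$ is a good pair and its Pl\"ucker form exists — Proposition 5.2(i) gives $h^0(E)=rm$, hence $V:=H^0(E)^*$ has dimension $rm$ and the Pl\"ucker embedding $\mathbb G_E$ lies in $\mathbb P^{\binom{rm}{r}-1}$, a projective space of dimension exactly $\binom{rm}{r}-1$. The map $\lambda_E\colon \mathbb P^n\to\mathbb P^{\binom{rm}{r}-1}$ is the projectivized dual of $d_E$, hence a linear projection; since $X\subset\mathbb P^n$ is embedded by the complete linear system $|\mathcal O_X(1)|$ it is nondegenerate, so $\langle g_E(X)\rangle=\langle\lambda_E(X)\rangle$ equals the linear image of $\lambda_E$, that is $\mathbb P({\rm Im}\,d_E^{*})$, where $d_E^{*}$ denotes the transpose of $d_E$. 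This is precisely what Lemma 6.3 records: $d_E$ is injective iff $d_E^{*}$ is surjective iff $g_E(X)$ is linearly nondegenerate in $\mathbb P^{\binom{rm}{r}-1}$.

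With this in hand the proof is one line. Intersecting $U$ with the dense open locus on which the conclusions of Theorem 6.4 hold, a general $(X,E)$ satisfies $\dim\langle g_E(X)\rangle\ge r(m-1)+g$. If moreover $g\ge\binom{rm}{r}-r(m-1)-1$, then $r(m-1)+g\ge\binom{rm}{r}-1=\dim\mathbb P^{\binom{rm}{r}-1}$, so the linear subspace $\langle g_E(X)\rangle$ has dimension at least that of its ambient space and must therefore coincide with it. By Lemma 6.3, $d_E$ is injective.

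Accordingly there is no obstacle intrinsic to this corollary: all the difficulty is concentrated in Theorem 6.4, whose proof must exhibit — by specialization/degeneration or an explicit construction — one general pair for which $g_E$ is an embedding and the span estimate $\dim\langle g_E(X)\rangle\ge r(m-1)+g$ holds. The only routine points in assembling the corollary itself are that $h^0(E)=rm$ on the relevant open set, so that $\binom{rm}{r}$ is the correct count, and that the two dense open conditions (membership in $U$, and the conclusion of Theorem 6.4) may be intersected. I would also note, for the application to the Main Theorem, that taking $m=3$ turns the hypothesis into $g\ge\binom{3r}{r}-2r-1$, i.e. $g$ exponentially large in $r$, which is exactly the meaning of ``$g\gg r$'' there.
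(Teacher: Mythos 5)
Your proposal is correct and matches the paper's own (essentially one-line) derivation: the span estimate $\dim\langle g_E(X)\rangle\ge r(m-1)+g$ from the preceding theorem, combined with the hypothesis $g\ge\binom{rm}{r}-r(m-1)-1$, forces $g_E(X)$ to span the Pl\"ucker space $\mathbb P^{\binom{rm}{r}-1}$, and the lemma equating this spanning condition with the injectivity of $d_E$ finishes the argument. The supporting observations you add (that $h^0(E)=rm$ on the relevant open set and that the two dense open conditions may be intersected) are exactly the implicit hypotheses the paper relies on.
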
 \par \noindent
Hence the proof of Theorem 6.1 also follows.
\begin{proof}[\underline{\it Proof of Theorem 6.3}] \ \par \noindent 
To prove the theorem, hence Theorem 6.1,  we observe that the moduli space of all pairs $(X,E)$ is an integral, quasi-projective variety defined over the moduli space $\mathcal M_g$ of $X$. On the other hand, the conditions in the statement of the theorem are open. Therefore, it suffices to construct \it one pair $(X,E)$ \rm such that $E$ is semistable, $h^0(E) = rm$ and  these conditions are satisfied.
We will construct such a pair \it by induction on the genus \rm $$ g \geq 0 $$ of $X$. For $g = 0$ we have $X = \mathbb P^1$ and $E = \mathcal O_{\mathbb P^1}(m-1)^r$.
\begin{lemma} Let $X = {\mathbb P}^1$  and $E = \mathcal O_{\mathbb P^1}(m-1)^r$, with $m \geq 2$. Then $d_E$ is surjective and $g_E$ is an embedding.  \end{lemma}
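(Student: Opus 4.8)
The plan is to first show that $d_E$ is surjective by evaluating it on a handful of explicit decomposable vectors, and then to deduce that $g_E$ is an embedding from the resulting fact that $\lambda_E$ is a linear embedding of $\mathbb{P}^n$. Write $H := H^0(\mathcal{O}_{\mathbb{P}^1}(m-1))$, so that $H^0(E) = H^{\oplus r}$ and $\det E = \mathcal{O}_{\mathbb{P}^1}(r(m-1))$; in particular $n = r(m-1) \geq 1$ because $m \geq 2$. For the surjectivity of $d_E$ I would feed in the ``coordinate'' sections: for $\phi_1, \dots, \phi_r \in H$, let $s_i \in H^{\oplus r} = H^0(E)$ be the section with $\phi_i$ in the $i$-th summand and $0$ in the others. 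The $r \times r$ matrix associated with $(s_1, \dots, s_r)$ is diagonal, so $d_E(s_1 \wedge \dots \wedge s_r)$ equals, up to sign, the product $\phi_1 \cdots \phi_r \in H^0(\det E)$. Hence the image of $d_E$ contains every $r$-fold product of elements of $H$, and therefore contains the image of the multiplication map ${\rm Sym}^r H \to H^0(\mathcal{O}_{\mathbb{P}^1}(r(m-1)))$. The latter is surjective --- each monomial $x^a y^{\,r(m-1)-a}$ with $0 \leq a \leq r(m-1)$ is a product of $r$ monomials of degree $m-1$, by writing $a = a_1 + \dots + a_r$ with $0 \leq a_i \leq m-1$ --- so $d_E$ is surjective.

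For the second step, surjectivity of $d_E$ is the same as injectivity of its dual $d_E^* \colon H^0(\mathcal{O}_X(1))^* \to (\wedge^r H^0(E))^*$; hence $\lambda_E = \mathbb{P}(d_E^*)$ has empty base locus and is a linear embedding of $\mathbb{P}^n$ onto a linear subspace of the Pl\"ucker space. On the other hand $X = \mathbb{P}^1$ is embedded in $\mathbb{P}^n$ by the complete linear system $\vert \mathcal{O}_X(1) \vert$, which is very ample of degree $r(m-1) \geq 1$, so $X$ sits in $\mathbb{P}^n$ as a smoothly embedded rational normal curve. Since $g_E = \lambda_E|_X$ is the restriction of the embedding $\lambda_E$, it is itself an embedding, which finishes the proof.

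I do not expect a genuine obstacle here: the lemma is elementary, and the only points that need a line of verification are the trivial splitting $a = a_1 + \dots + a_r$ above, the identity $g_E = \lambda_E|_X$ recalled in Section 4, and the immediate observation that $\lambda_E = \mathbb{P}(d_E^*)$ is a linear embedding exactly when $d_E$ is surjective. It is worth recording, in view of the induction in Theorem 6.3, that the same computation gives $\langle g_E(X) \rangle = \lambda_E(\mathbb{P}^n)$, of dimension exactly $r(m-1) = r(m-1) + g$ when $g = 0$ --- the base case of the estimate needed there --- and that the hypothesis $m \geq 2$ serves only to guarantee $n \geq 1$ and that $H$ is large enough for the monomial splitting (for $m = 1$ the map $g_E$ is constant).
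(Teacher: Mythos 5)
Your proof is correct and follows essentially the same route as the paper: the paper simply declares the surjectivity of $d_E$ ``standard'' (citing Teixidor i Bigas) and then, exactly as you do, deduces that $g_E$ is an embedding because it is given by ${\rm Im}\, d_E = H^0(\mathcal O_{\mathbb P^1}(r(m-1)))$, a very ample complete linear system. Your explicit verification via diagonal sections and the surjectivity of ${\rm Sym}^r H^0(\mathcal O_{\mathbb P^1}(m-1)) \to H^0(\mathcal O_{\mathbb P^1}(r(m-1)))$ is a correct filling-in of that omitted step, and your closing remark correctly identifies the $g=0$ base case of the dimension estimate needed in Theorem 6.3.
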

\begin{proof} The proof of the surjectivity of $d_E$ is standard. It also follows from the results in  [T]. In order to deduce that $g_E$ is an embedding recall that  $g_E$ is defined by ${\rm Im} \ d_E$, hence
by the complete linear system $ \mid \mathcal O_{\mathbb P^1}(r(m-1)) \mid$. \end{proof} \noindent
Now we assume by induction  that the statement is true for $g$ and prove it for $g+1$. \par \noindent Let $(X,E)$ be a general pair such that $X$ has genus $g$. We recall that then $X$ is a general curve of genus $g$ and $(E,H^0(E))$ is a good pair  admitting  a Pl\"ucker form. \par \noindent
By induction $g_E$ is an embedding and  ${\dim} \ \langle \ g_E(X) \ \rangle \  \geq r(m - 1) + g$.  We need to prove various lemmas.
\begin{lemma} The evaluation map $ev_{x,y}: H^0(E) \to E_x \oplus E_y$ is surjective for general $x, y \in X$.
\end{lemma}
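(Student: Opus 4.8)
The plan is to convert surjectivity of $ev_{x,y}$ into a cohomological vanishing, and then to verify the vanishing on one explicit bundle and spread it out by openness. For distinct $x,y\in X$ I would start from the exact sequence
$$0 \to E(-x-y) \to E \to E_x \oplus E_y \to 0 .$$
Since for the general $E$ in $\mathcal{S}_r$ one has $h^0(E)=rm=\chi(E)$, hence $h^1(E)=0$, the long exact cohomology sequence shows that $ev_{x,y}$ is surjective if and only if $h^1(E(-x-y))=0$; and $\chi(E(-x-y))=r(m-2)>0$ because $m\ge 3$, so what is really at stake is that $E(-x-y)$ has no higher cohomology for general $x,y$.

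Next I would reduce to a single bundle. The property ``$ev_{x,y}$ is surjective for general $x,y$'' only needs to be checked on a dense subset of $\mathcal{S}_r$: on the open locus where $h^1(E)=0$ the map $ev_{x,y}$ is a morphism of vector bundles of constant ranks $rm$ and $2r$, so surjectivity is open in the triple $(E,x,y)$ and, by irreducibility of $\mathcal{S}_r$, a single example propagates to the general member — this is exactly the openness device already used in the proofs of Proposition 5.2 and Theorem 6.3. For the example I would take $E_0 = L_1\oplus\dots\oplus L_r$ with $L_1,\dots,L_r$ general line bundles of degree $g-1+m$ and $L_1\otimes\dots\otimes L_r\cong\mathcal{O}_X(1)$; as in the $L^{\oplus r}$ computation of Proposition 5.2, $E_0$ is semistable, globally generated, has $h^0(E_0)=rm$ and $h^1(E_0)=0$, and its classifying map is birational onto its image, so $(E_0,H^0(E_0))$ is a good pair.

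It remains to check $h^1(E_0(-x-y))=0$ for general $x,y$. Here $h^1(E_0(-x-y))=\sum_i h^1(L_i(-x-y))$ with $\deg L_i(-x-y)=g-3+m\ge g$, and by Serre duality $h^1(L_i(-x-y))\ne 0$ precisely when $\mathcal{O}_X(x+y)$ lies in the translate $\omega_X^{-1}\otimes L_i\otimes W_{g+1-m}(X)$ inside $\pic^2(X)$. For general $L_i$ this is a general translate of $W_{g+1-m}(X)$, which on a general curve has dimension $g+1-m$, i.e. codimension $m-1\ge 2$; intersecting such a translate with the surface $W_2(X)=\{\mathcal{O}_X(x+y)\}$ therefore cuts out a proper closed subset of $X\times X$ (indeed an empty one once $m\ge 4$). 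Discarding the finitely many bad loci for $i=1,\dots,r$ gives $h^1(E_0(-x-y))=0$ for general $x,y$, and the previous paragraph then yields the lemma.

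The part that carries actual content is the last paragraph, and inside it the only non-formal ingredient is the classical dimension count $\dim W_d(X)=d$ on a general curve; everything else is bookkeeping. The one point requiring a little care is the reduction step: $h^1(E(-x-y))$ is only upper-semicontinuous, so one must work on the open stratum of $\mathcal{S}_r$ on which it attains its minimum and observe that this stratum contains both $E_0$ (where the minimum is $0$ for suitable $x,y$) and the general bundle.
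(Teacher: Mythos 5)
Your proof is correct, but it takes a genuinely different and much heavier route than the paper's. The paper's argument is a three\-line reduction to something already established: surjectivity of $ev_{x,y}$ is an open condition on $(x,y)\in X^2$, so if it failed at the general point it would fail everywhere, giving $h^0(E(-x-y))\ge h^0(E)-(2r-1)=r(m-2)+1$ for \emph{all} pairs $(x,y)$; subtracting $m-2$ further points drops $h^0$ by at most $r$ each time, so $h^0(E(-x-y-z_1-\cdots-z_{m-2}))\ge 1$ at every point of $X^m$, which says precisely that $(E,H^0(E))$ has no Pl\"ucker form — contradicting the standing assumption of Section 6 (guaranteed by Proposition 5.2(ii)). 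In other words, the lemma is a formal consequence of the $m$-point non-degeneracy already packaged in the existence of the Pl\"ucker form, and no new geometric input is needed. Your argument instead re-derives the weaker two-point statement from scratch: reduction to $h^1(E(-x-y))=0$, specialization to the split bundle $L_1\oplus\cdots\oplus L_r$, and Serre duality plus the dimension count for $W_{g+1-m}$ and $W_2$ in the Jacobian. This is sound — the determinant constraint still leaves each $L_i$ individually general, and $\dim W_d=d$ for $d\le g$ holds on any curve, so you do not even need generality of $X$ at that step — and it has the virtue of being self-contained, but it duplicates work the paper has already done and imports Brill--Noether-type bookkeeping where a semicontinuity observation suffices. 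Your closing remark about the upper-semicontinuity of $h^1(E(-x-y))$ identifies a real subtlety in your approach, one the paper's argument sidesteps entirely by never leaving the fixed bundle $E$.
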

\begin{proof}  If not we would have $h^0(E(-x-y)) > h^0(E) - 2r = r(m-2)$, for any pair $(x,y) \in X^2$. This implies that $h^0(E(-x-y-z_1- \dots -z_{m-2}) ) \geq 1$, $\forall \ (x,y,z_1, \dots,z_{m-2}) \in X^m$
and hence that $(E,H^0(E))$ has no Pl\"ucker form. But then, by Proposition 5.2 (ii),  $(X,E)$ is not general: a contradiction. \end{proof} \par \noindent From now on we put $$C := g_E(X).$$ 
Choosing $x,y$ so that $ev_{x,y}$ is surjective, we have a linear embedding
$$
E^*_x \oplus E^*_y \subset H^0(E)^*
$$
induced by the dual map $ev^*_{x,y}$. This induces an inclusion of Pl\"ucker spaces
$$
\mathbb P^{ \binom {2r}r -1} := {\mathbb P} (\wedge^r (E^*_x \oplus E^*_y) ) \  \subset \  {\mathbb P}^{\binom {rm}r -1} := {\mathbb P} (\wedge^r H^0(E)^* )
$$
and of their corresponding Grassmannians
$$
\mathbb G_{x,y} := G(r, (E^*_x \oplus E^*_y)) \subset \mathbb G_E.
$$
\begin{lemma} Assume $ \langle \ C \ \rangle  $  is a proper subspace of the Pl\"ucker space of $\mathbb G_E$. Let $x,y$ be general points of $X$. Then
$\langle  \ \mathbb G_{x,y} \ \rangle  $ is not  in $\langle  \ C \ \rangle $. 
\end{lemma}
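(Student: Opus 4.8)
The plan is to argue by contradiction, after translating the statement into linear algebra inside $\wedge^{r}H^{0}(E)=(\wedge^{r}V)^{*}$, where $V=H^{0}(E)^{*}$ and $W_{z}:=\operatorname{Im}ev_{z}^{*}\subseteq V$ (so $g_{E}(z)=[\wedge^{r}W_{z}]$, and $\dim W_{z}=r$ for all $z$ since $E$ is globally generated by $H^{0}(E)$). Two orthogonality identities will organize everything. First, since a Grassmannian is linearly non‑degenerate in its Pl\"ucker embedding and $ev_{x,y}$ is onto for general $x,y$ by Lemma 6.6, one has $\langle\mathbb{G}_{x,y}\rangle=\mathbb{P}(\wedge^{r}(W_{x}\oplus W_{y}))$, whose annihilator in $\wedge^{r}H^{0}(E)$ is exactly $\ker(\wedge^{r}ev_{x,y})$. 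Second, $\langle C\rangle^{\perp}=\ker d_{E}$: an $\omega\in\wedge^{r}H^{0}(E)$ kills $\wedge^{r}W_{x}$ iff $\wedge^{r}ev_{x}(\omega)=0$ in $\det E_{x}$, i.e. iff $d_{E}(\omega)$ vanishes at $x$, so $\langle C\rangle^{\perp}=\bigcap_{x}(\wedge^{r}W_{x})^{\perp}=\ker d_{E}$. Thus the hypothesis reads $\ker d_{E}\neq 0$, and the conclusion reads: for general $(x,y)$, $\ker d_{E}\not\subseteq\ker(\wedge^{r}ev_{x,y})$. Because "$\ker d_{E}\subseteq\ker(\wedge^{r}ev_{x,y})$" is a closed condition on $(x,y)$, it suffices to exhibit one pair where it fails; equivalently, to show that "$\langle\mathbb{G}_{x,y}\rangle\subseteq\langle C\rangle$ for every $(x,y)$" is impossible.

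So suppose it held for every $(x,y)$, and run a linear–span computation (working with the affine cones in $\wedge^{r}V$). Expanding $\wedge^{r}(W_{x}\oplus W_{y})=\bigoplus_{a+b=r}\wedge^{a}W_{x}\wedge\wedge^{b}W_{y}$ and keeping the $a=r-1$ summand gives $\wedge^{r-1}W_{x}\wedge W_{y}\subseteq\langle C\rangle$ for all $x,y$. Summing over $y$ and using $\sum_{y\in X}W_{y}=V$ — which is precisely the fact that $ev^{*}_{y_{1},\dots ,y_{m}}$ is onto $V$ for general $y_{1},\dots ,y_{m}$, i.e. that $(E,H^{0}(E))$ has a Pl\"ucker form — yields $\wedge^{r-1}W_{x}\wedge V\subseteq\langle C\rangle$; summing over $x$ gives $S_{r-1}\wedge V\subseteq\langle C\rangle$ with $S_{r-1}:=\sum_{x\in X}\wedge^{r-1}W_{x}\subseteq\wedge^{r-1}V$. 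By the same computation as above, $S_{r-1}^{\perp}=\ker d^{(r-1)}_{E}$, where $d^{(r-1)}_{E}\colon\wedge^{r-1}H^{0}(E)\to H^{0}(\wedge^{r-1}E)$ is the order $r-1$ determinant map. Hence, if $d^{(r-1)}_{E}$ is injective, then $S_{r-1}=\wedge^{r-1}V$ and $\langle C\rangle\supseteq\wedge^{r-1}V\wedge V=\wedge^{r}V$, contradicting that $\langle C\rangle$ is proper. This proves the Lemma, modulo the injectivity of $d^{(r-1)}_{E}$.

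The remaining point — injectivity of $d^{(r-1)}_{E}$ for the general $(X,E)$ when $g\gg r$ — is where the real work lies, and I expect it to be the main obstacle. For $r=2$ it is trivial, $d^{(1)}_{E}$ being the identity of $H^{0}(E)$. For $r\geq 3$ one has the numerical room: $\binom{rm}{r-1}\leq h^{0}(\wedge^{r-1}E)$ for $g\gg r$ by Riemann–Roch (here $\wedge^{r-1}E$ is semistable of large slope, so $h^{1}=0$), and injectivity of $d^{(r-1)}_{E}$ is an open condition on $E$ within the locus $h^{0}(E)=rm$; so it would suffice to produce a single semistable bundle with $h^{0}=rm$ on which $d^{(r-1)}_{E}$ is injective. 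The obvious candidate $L^{\oplus r}$ fails for $r\geq 3$ — its subbundles $L^{\oplus k}$, $k\leq r-1$, push sections into $\ker d^{(r-1)}_{E}$ — so one must use a genuinely stable bundle, or a nodal / elementary‑modification degeneration in the style of the genus induction of this section, and check the vanishing of the kernel by a direct calculation. Equivalently, and probably cleanest within the induction of Theorem 6.3, one strengthens the inductive hypothesis to "$d^{(k)}_{E}$ is injective for every $k\leq r$", so that the injectivity of $d^{(r-1)}_{E}$ is available at each genus.
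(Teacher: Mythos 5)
Your reduction is clean and the linear algebra is correct as far as it goes: from $\langle\mathbb G_{x,y}\rangle\subseteq\langle C\rangle$ for all general $(x,y)$ you do get $\bigl(\sum_{x}\wedge^{r-1}W_x\bigr)\wedge V\subseteq\langle C\rangle$, and the identifications $\langle C\rangle^{\perp}=\ker d_E$ and $S_{r-1}^{\perp}=\ker d^{(r-1)}_E$ are right. But the argument then hangs entirely on the injectivity of $d^{(r-1)}_E$, which you do not prove, and this is a genuine gap rather than a deferrable technicality. First, that injectivity is a statement of the same nature and difficulty as the injectivity of $d_E$ itself, which is the whole point of Section 6 and whose proof \emph{uses} this lemma, so the reduction is circular in spirit. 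Second, and more fatally for the way the lemma is actually used: it is invoked at every step of the genus induction proving Theorem 6.3, i.e.\ for pairs $(X,E)$ of \emph{every} genus $g\geq 0$, and for small $g$ the map $d^{(r-1)}_E$ cannot be injective for dimension reasons. At the base of the induction, $X=\mathbb P^1$ and $E=\mathcal O_{\mathbb P^1}(m-1)^{\oplus r}$, one has $\dim\wedge^{r-1}H^0(E)=\binom{rm}{r-1}$ while $h^0(\wedge^{r-1}E)=r\bigl((r-1)(m-1)+1\bigr)$; for $r=m=3$ this is $36$ versus $15$. So your sufficient condition fails precisely where the lemma is first needed, and the proposed fix (strengthening the inductive hypothesis to injectivity of all the $d^{(k)}_E$) cannot work either, since those statements are simply false at low genus.

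The paper's proof avoids any auxiliary injectivity. It projects from $E_x^*$: consider $\pi\colon H^0(E)^*\to H^0(E(-x))^*$ and the induced linear projection $\wedge^r\pi$ of Pl\"ucker spaces, whose restriction $f\colon\mathbb G_E\dashrightarrow\mathbb G_{E(-x)}$ has $\mathbb G_{x,y}$ as the closure of the fibre over $f(y)$. If $f(C)$ spans the target Pl\"ucker space, then the spans $\langle\mathbb G_{x,y}\rangle$ of the fibres over points of $f(C)$ together span the source, so one of them escapes the proper subspace $\langle C\rangle$. If $f(C)$ does not span, the proof uses the existence of the Pl\"ucker form to choose $x,y,z_1,\dots,z_{m-2}$ with $h^0(E(-x-y-z))=0$, so that $E_y^*\oplus E_z^*$ maps isomorphically onto $H^0(E(-x))^*$ and the subspaces $\langle\mathbb G_{y,z_i}\rangle$ span $\mathbb P(\wedge^r H^0(E(-x))^*)$, again forcing one of them outside the proper subspace $\langle f(C)\rangle$. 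You would need an argument of this kind, using only the hypotheses actually available (good pair, existence of the Pl\"ucker form), in place of the appeal to $d^{(r-1)}_E$.
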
 
\begin{proof}  For a general $x \in X$ consider the linear map $\pi:  H^0(E)^* \to H^0(E(-x))^*$ dual to the inclusion $H^0(E(-x)) \subset H^0(E)$. It induces a surjective linear
projection 
$$\wedge^r \pi: \mathbb P (\wedge^r H^0(E)^*) \to \mathbb P (\wedge^r H^0(E(-x))^*), $$ with center the linear span 
$ \langle \  \sigma  \ \rangle$ of $\sigma := \lbrace L \in \mathbb G_E \ \vert  \ {\rm dim} (L \cap E^*_x) \geq 1 \rbrace$. In particular $\wedge^r \pi$ restricts to a rational map between Grassmannians
$$
f: \mathbb G_E \to \mathbb G_{E(-x)},  $$
 where   $ \mathbb G_{E(-x)} := G(r, H^0(E(-x))^*)  \simeq  G(r,(m-1)r). $
Let $l \in \mathbb G_E$ be the parameter point of the space $L$, then $f(l)$ is the parameter point of $\pi(L)$.  Clearly $f$ is defined at $l$ iff $L \cap E^*_x = 0$. Moreover, the closure of the fibre of $f$ 
at $f(l)$ is the Grassmannian $G(r, L \oplus E^*_x)$.   In particular, the closure of the fibre at $f(y)$ is $\mathbb G_{x,y}$, for a general $y \in X$. 
We distinguish two cases:
\hfill \par \medskip \noindent
(1) \it $f(C)$ spans the Pl\"ucker space of $\mathbb G_{E(-x)}$. \rm  \ \  Since $f = {\wedge^r \pi}_{{\vert}_{\   \mathbb G_E}}$
 and $\wedge^r \pi$ is linear, it follows that  $\bigcup_{y \in C}  \langle \ \mathbb G_{x,y} \ \rangle $ spans the Pl\"ucker space of $\mathbb G_E$. Since $ \langle \  C \ \rangle$ is proper in it, we conclude that $\langle \  \mathbb G_{x,y}  \ \rangle$ is not in 
$\langle \ C \ \rangle $ for some $y$, hence  for general points $x, y \in X$.
\medskip \par \noindent
(2) \it $f(C)$ does not span the Pl\"ucker space of $\mathbb G_{E(-x)}$. \rm \  Since the Pl\"ucker form of $(E, H^0(E))$ exists and $m \geq 3$,   we can fix   $x, y, z_1 \dots z_{m-2} \in X$ so that $h^0(E(-x-y-z)) = 0$, where $z := z_1 + \dots + z_{m-2}$. Then we have $H^0(E(-x)) \cap H^0(E(-y-z)) = 0$ in $H^0(E)$. Putting $E^*_z := E_{z_1}^* \oplus \dots \oplus E_{z_i}^*$, it follows that $$ {\pi}_{{\vert}_{\   (E^*_z \oplus E^*_y)}}: E^*_y \oplus E^*_z \to H^0(E(-x))^*$$
is an isomorphism, that is, ${\wedge^r \pi}$ induces the following  isomorphism of projective spaces:
$$ i_{y,z} \colon {\mathbb P}({\wedge}^r( E^*_y \oplus E^*_z )) \to {\mathbb P}(\wedge^r H^0(E(-x))^*).$$
On the other hand, ${\mathbb P}({\wedge}^r( E^*_y \oplus E^*_z ))$ is spanned by the union of its natural 
linear subspaces $ \langle {\mathbb G}_{y, z_i} \rangle = {\mathbb P}({\wedge}^r( E^*_y \oplus E^*_{z_i }))$, $i = 1,..,m-2$. 
Since $ \langle \  f(C) \ \rangle$ is a proper subspace of
${\mathbb P}(\wedge^r H^0(E(-x))^*)$, it follows  that $\ \langle  \mathbb G_{y,z_i} \ \rangle$ is not in $ \langle \ C \ \rangle$,
for some $i = 1,..,m-2.$ 
 \end{proof} \par \noindent 
Now we assume that $ \langle \ C \ \rangle $ is a proper subspace of the Pl\"ucker space of $\mathbb G_E$ and  fix general points $x,y \in X$ so that the conditions of the previous lemma are satisfied. Keeping the
previous notations let $P \subset \mathbb P^{rm-1}$ be the tautological image of $\mathbb P(E^*)$ and let $P_z := \mathbb P(E^*_z)$, $z \in X$. We observe that the Grassmannian $\mathbb G_{x,y}$
is ruled by smooth rational normal curves of degree $r$ passing through $x$ and $y$. More precisely, 
let  $$\mathbb P^{2r-1} := \mathbb P (E^*_x \oplus E^*_y)$$ 
and for  $t \in \mathbb G_{x,y}$ let 
 $$ P_t \subset \mathbb P^{2r-1}   \subset \mathbb P^{rm-1}$$ 
be the projectivized space corresponding to $t$. We have:
\begin{lemma} For a general $t \in \mathbb G_{x,y}$ there exists a unique Segre product $S := \mathbb P^1 \times \mathbb P^{r-1}$ such that
$P_x \cup P_y \cup P_t \subset S \subset \mathbb P^{2r - 1}.$ Moreover: \begin{enumerate}
 \item the ruling of $S$ is parametrized by a degree $r$ rational normal curve 
$$
R \subset \mathbb G_{x,y} \subset \mathbb G_E \subset \mathbb P^{\binom {rm}r - 1},
$$
\item the universal bundle ${\mathcal U}_r$ of $\mathbb G_E$ restricts to $\mathcal O_{\mathbb P^1}(-1)^{\oplus r}$ on $R$, 
\item the restriction map $H^0({\mathcal U}^*) \to H^0(\mathcal O_{\mathbb P^1}(1))^{\oplus r})$ is surjective.
\end{enumerate}
\end{lemma}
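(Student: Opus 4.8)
The plan is to reduce the whole assertion to linear algebra on the $2r$-dimensional space $W := E^*_x \oplus E^*_y$ sitting inside $V := H^0(E)^*$, where $P_x = \mathbb{P}(E^*_x)$ and $P_y = \mathbb{P}(E^*_y)$ are complementary $(r-1)$-planes of $\mathbb{P}^{2r-1} = \mathbb{P}(W)$ and, for general $t \in \mathbb{G}_{x,y} = G(r,W)$, the plane $P_t = \mathbb{P}(t)$ is disjoint from both. Writing $t$ as the graph of an isomorphism $\psi\colon E^*_x \to E^*_y$, I would check that the triple $(E^*_x, E^*_y, t)$ determines a unique decomposition $W \cong \mathbb{C}^2 \otimes F$, with $F := E^*_x$ and a basis $e_0, e_\infty$ of $\mathbb{C}^2$, such that $E^*_x = e_0 \otimes F$, $E^*_y = e_\infty \otimes F$ and $t = (e_0 + e_\infty)\otimes F$: requiring $P_x$ and $P_y$ to be fibres of the $\mathbb{P}^{r-1}$-ruling of a Segre $\mathbb{P}(\mathbb{C}^2)\times\mathbb{P}(F) \subset \mathbb{P}(W)$ fixes the tensor decomposition up to the choice of a gluing isomorphism $E^*_x \xrightarrow{\sim} E^*_y$, and requiring $P_t$ to be a ruling fibre as well forces that gluing to be a scalar multiple of $\psi$. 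Put $S := \mathbb{P}(\mathbb{C}^2)\times\mathbb{P}(F) \subset \mathbb{P}^{2r-1}$. Then $S$ contains $P_x \cup P_y \cup P_t$, and it is the only such Segre, since any $\mathbb{P}^1 \times \mathbb{P}^{r-1} \subset \mathbb{P}^{2r-1}$ containing these three pairwise disjoint $(r-1)$-planes must carry them among the fibres of its $\mathbb{P}^{r-1}$-ruling --- for $r \geq 3$ because those are the only $(r-1)$-planes lying on a Segre $\mathbb{P}^1 \times \mathbb{P}^{r-1}$, and for $r = 2$ because three pairwise disjoint lines on a smooth quadric surface lie in one ruling.

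With $S$ in hand I would read off (1)--(3) from the tensor picture. For (1): the ruling fibre $\mathbb{P}(v \otimes F)$, $[v] \in \mathbb{P}(\mathbb{C}^2) = \mathbb{P}^1$, is a point of $G(r,W) = \mathbb{G}_{x,y} \subset \mathbb{G}_E$, and, choosing a basis $f_1, \dots, f_r$ of $F$, its Pl\"ucker image is the decomposable vector $(v \otimes f_1)\wedge \dots \wedge(v \otimes f_r)$, which under the Cauchy summand $\mathrm{Sym}^r\mathbb{C}^2 \otimes \wedge^r F \subset \wedge^r(\mathbb{C}^2 \otimes F)$ equals $v^{\otimes r}\otimes(f_1 \wedge \dots \wedge f_r)$; hence $R$ is the image of the Veronese map $[v]\mapsto[v^{\otimes r}]$ into a fixed linear $\mathbb{P}^r$ of the Pl\"ucker space, i.e. a smooth rational normal curve of degree $r$, and $\mathbb{P}^1 \to R$ is an isomorphism. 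For (2): through this identification $R = \mathbb{P}(\mathbb{C}^2)$, the universal subbundle $\mathcal{U}_r$ of $\mathbb{G}_E$ (whose fibre over $l$ is the corresponding $r$-dimensional subspace of $V$) restricts on $R$ to the subbundle of $W \otimes \mathcal{O}_R$ with fibre $(\mathbb{C}v)\otimes F$ over $[v]$, that is, $\mathcal{O}_{\mathbb{P}^1}(-1)\otimes F \cong \mathcal{O}_{\mathbb{P}^1}(-1)^{\oplus r}$. For (3): restricting the tautological sequence $0 \to \mathcal{U}_r \to V \otimes \mathcal{O}_{\mathbb{G}_E} \to Q \to 0$ (with $Q$ the universal quotient bundle, of rank $r(m-1)$) to $R$ and using the Euler sequence $0 \to \mathcal{O}_{\mathbb{P}^1}(-1)\to \mathbb{C}^2\otimes\mathcal{O}_{\mathbb{P}^1}\to\mathcal{O}_{\mathbb{P}^1}(1)\to 0$ gives $Q|_R \cong \mathcal{O}_{\mathbb{P}^1}(1)^{\oplus r}\oplus\mathcal{O}_{\mathbb{P}^1}^{\oplus r(m-2)}$, so $H^1(R, Q^*|_R) = 0$; the cohomology sequence of $0 \to Q^*|_R \to V^* \otimes \mathcal{O}_R \to \mathcal{U}^*_r|_R \to 0$ then shows that $H^0(\mathcal{U}^*_r) = V^* = H^0(E)$ surjects onto $H^0(R, \mathcal{U}^*_r|_R) = H^0(\mathcal{O}_{\mathbb{P}^1}(1)^{\oplus r})$.

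The only genuinely delicate step is the first one, namely the existence and especially the \emph{uniqueness} of $S$: the fact that three sufficiently general $(r-1)$-planes of $\mathbb{P}^{2r-1}$ lie on exactly one $\mathbb{P}^1 \times \mathbb{P}^{r-1}$ as fibres of its $\mathbb{P}^{r-1}$-ruling. Existence is the graph construction above; uniqueness rests on the classification of linear subspaces of a Segre variety, with the quadric surface case $r = 2$ handled separately. Once $S$ is pinned down, (1)--(3) are formal, using only the Cauchy formula for $\wedge^r(\mathbb{C}^2 \otimes F)$ and the Euler sequence on $\mathbb{P}^1$. I would also note that the genericity of $t$ used here --- pairwise disjointness of $P_x, P_y, P_t$, plus the vanishing $h^0(E(-x-y-z_1-\dots-z_{m-2})) = 0$ already secured for general $x, y$ --- defines a non-empty open subset of $\mathbb{G}_{x,y}$, so it is harmless to argue for $t$ outside a proper closed subset.
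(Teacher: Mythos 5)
Your proof is correct, and it reaches the same geometric picture as the paper --- the unique Segre $\mathbb P^1\times\mathbb P^{r-1}$ through the three pairwise disjoint $(r-1)$-planes, whose ruling traces out a degree $r$ rational normal curve in $\mathbb G_{x,y}$ --- but it gets there by a more explicit route. The paper constructs $S$ as the union of all lines meeting $P_x$, $P_y$ and $P_t$ and cites Harris (p.\ 26, 2.12) for existence and uniqueness, then identifies $S$ with the tautological image of $\mathbb P(\mathcal O_{\mathbb P^1}(1)^{\oplus r})$ and reads off (i)--(iii) at once from the fact that the map sending $p\in\mathbb P^1$ to the fibre of $S$ over $p$ is the classifying map of $\mathcal O_{\mathbb P^1}(1)^{\oplus r}$. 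You instead build the tensor decomposition $W\cong\mathbb C^2\otimes F$ directly from the graph of an isomorphism $E^*_x\to E^*_y$, prove uniqueness by classifying the $(r-1)$-planes on the Segre (with the $r=2$ quadric case handled separately), obtain (1) from the Cauchy summand $\mathrm{Sym}^r\mathbb C^2\otimes\wedge^rF$ and the Veronese $[v]\mapsto[v^{\otimes r}]$, and prove (3) by computing $Q|_R\cong\mathcal O_{\mathbb P^1}(1)^{\oplus r}\oplus\mathcal O_{\mathbb P^1}^{\oplus r(m-2)}$ and killing $H^1(Q^*|_R)$ in the dual tautological sequence. What your version buys is self-containedness: the uniqueness of $S$ and the surjectivity in (3) --- which the paper asserts with a reference or leaves implicit in ``this implies (ii) and (iii)'' --- are actually proved. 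What the paper's version buys is brevity and a cleaner conceptual statement (all three conclusions fall out of recognizing $S$ as a projective bundle and $R$ as the image of its classifying map). Both are complete and correct.
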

\begin{proof} Since $x,y$ are general in $X$, Lemma 6.6 implies that $P_x \cap P_y = \emptyset$. Since $t$ is general in $\mathbb G_{x,y}$, we have  $P_t \cap P_x$ $ =$ $ P_t \cap P_y = \emptyset$. It is  a standard fact that the union of all lines in ${\mathbb P}^{2r-1}$ meeting $P_x$, $P_y$ and $P_t$ is  the Segre embedding $S \subset {\mathbb P}^{2r-1}$ of the product $\mathbb P^1 \times \mathbb P^{r-1}$,  
which  is actually the unique Segre variety containing  the  above linear spaces, see \cite{key7bis}, p.26, 2.12. 
It is also well known that 
$S$ is the tautological image of the projective bundle associated to  $\mathcal O_{\mathbb P^1}(1)^{\oplus r}$, see \cite{key7tris}. Therefore, the  map assigning  to each point $p \in {\mathbb P}^1$ the fiber of $S$ over $p$  is the classifying map of $\mathcal O_{\mathbb P^1}(1)^{\oplus r}$. So it defines an embedding of ${\mathbb P}^1$ into the Grassmannian ${\mathbb G}_{x,y}$, whose image is a rational normal curve $R$.  This implies (ii) and (iii). 
\end{proof} 
 
\noindent   Let $t \in \mathbb G_{x,y}$ be a sufficiently general point, where $x,y$ are general in $X$. Then, by  Lemma 6.7, $t$ is not in the linear space $\langle \ C \ \rangle $. Since $\mathbb G_{x,y}$ is ruled by the family of curves $R$, we can also assume that  $C \cup R$ is a nodal curve with exactly two nodes in $x$ and $y$. So far we have constructed a nodal curve
\begin{equation}
\Gamma := C \cup R
\end{equation}
such that \begin{enumerate} \it
\item $\Gamma$ has arithmetic genus $g + 1$ and degree $r(m + g)$,
\item ${\dim} \ \langle  \ \Gamma   \  \rangle  \geq \ {\dim} \ \langle  \ C \ \rangle   + 1 = r(m - 1) + g + 1$.
\end{enumerate} 
\begin{lemma} \ \begin{enumerate} \item The curve $\Gamma$ is smoothable in $\mathbb G_E$,
\item $h^1(\mathcal O_{\Gamma}(1)) = 0$ and $h^0(\mathcal O_{\Gamma}(1)) = r(m + g) - g$. \end{enumerate} \end{lemma}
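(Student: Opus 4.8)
The plan is to treat the two assertions separately: (2) by normalising $\Gamma$ at its two nodes, and (1) by the standard smoothing criterion for nodal curves, whose input reduces to cohomology vanishings on the components $C$ and $R$. For (2), write $p,q$ for the two nodes; since $\Gamma=C\cup R$ is obtained by gluing $C$ to $R$ at $p$ and $q$, normalisation at these points yields the disjoint union $C\sqcup R$ and an exact sequence
$$
0\longrightarrow \mathcal O_{\Gamma}(1)\longrightarrow \mathcal O_{C}(1)\oplus\mathcal O_{R}(1)\longrightarrow \mathbb C_{p}\oplus\mathbb C_{q}\longrightarrow 0 .
$$
By construction $\mathcal O_{C}(1)=g_{E}^{*}\mathcal O_{\mathbb G_{E}}(1)\cong {\det}\,E=\mathcal O_{X}(1)$, of degree $r(m+g-1)$; since $r\ge 2$ and $m\ge 3$ one has $r(m+g-1)-2\ge 2g-1$, so $h^{1}(\mathcal O_{X}(1)(-p-q))=0$, whence $h^{1}(\mathcal O_{C}(1))=0$ and the evaluation $H^{0}(\mathcal O_{C}(1))\to\mathbb C_{p}\oplus\mathbb C_{q}$ is surjective. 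On the other factor $\mathcal O_{R}(1)\cong\mathcal O_{\mathbb P^{1}}(r)$ has $h^{1}=0$. Taking cohomology gives $h^{1}(\mathcal O_{\Gamma}(1))=0$, and then $h^{0}(\mathcal O_{\Gamma}(1))=\chi(\mathcal O_{\Gamma}(1))=r(m+g)+1-p_{a}(\Gamma)=r(m+g)-g$, since $\mathcal O_{\Gamma}(1)$ has degree $r(m+g)$ and $p_{a}(\Gamma)=g+1$. I note that this same vanishing is what will pass, by upper semicontinuity of $h^{1}$, to the general member of the smoothing constructed in (1).

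For (1) the plan is to invoke the standard deformation--theoretic criterion for smoothing nodal curves on a smooth variety (see e.g. Hartshorne--Hirschowitz, or Sernesi's book on deformations of algebraic schemes): a connected nodal curve $\Gamma\subset\mathbb G_{E}$ with smooth components meeting transversally along $\Sigma=\{p,q\}$ admits a flat deformation in $\mathbb G_{E}$ whose general member is a smooth curve, provided $H^{1}(\Gamma,N_{\Gamma/\mathbb G_{E}})=0$ and the global sections of $N_{\Gamma/\mathbb G_{E}}$ surject onto its fibres at the nodes (equivalently $H^{1}(\Gamma,N'_{\Gamma/\mathbb G_{E}})=0$ for the equisingular normal sheaf). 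Since $\Gamma$ is a local complete intersection in the smooth $\mathbb G_{E}$, the sheaf $N_{\Gamma/\mathbb G_{E}}$ is locally free and sits in the Mayer--Vietoris sequence
$$
0\longrightarrow N_{\Gamma/\mathbb G_{E}}\longrightarrow N_{\Gamma/\mathbb G_{E}}|_{C}\oplus N_{\Gamma/\mathbb G_{E}}|_{R}\longrightarrow N_{\Gamma/\mathbb G_{E}}|_{\Sigma}\longrightarrow 0 ,
$$
while $N_{\Gamma/\mathbb G_{E}}|_{C}$ fits in $0\to N_{C/\mathbb G_{E}}\to N_{\Gamma/\mathbb G_{E}}|_{C}\to\mathbb C_{p}\oplus\mathbb C_{q}\to 0$, and similarly on $R$. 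It thus suffices to verify $H^{1}(N_{C/\mathbb G_{E}}(-\Sigma))=0$, $H^{1}(N_{R/\mathbb G_{E}}(-\Sigma))=0$, and the surjectivity at $\Sigma$, the last being a formal consequence of the first two.

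The contribution of $C$ I would carry along inside the induction, strengthening the inductive hypothesis of Theorem 6.3 to include that $N_{C/\mathbb G_{E}}$ is globally generated (so in particular $H^{1}(N_{C/\mathbb G_{E}}(-P-Q))=0$ for general $P,Q\in C$). This holds in the base case $g=0$, where $C$ is the degree $r(m-1)$ rational normal curve: there $N_{C/\mathbb G}$ is a quotient of $T_{\mathbb G}|_{C}\cong\mathcal U_{r}^{*}|_{C}\otimes Q|_{C}$, with $Q$ the tautological quotient bundle, and the latter is globally generated because $\mathcal U_{r}^{*}|_{C}\cong E=\mathcal O_{\mathbb P^{1}}(m-1)^{\oplus r}$ is globally generated and $Q|_{C}$ is a quotient of a trivial bundle. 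It then propagates to the smoothing of $\Gamma$ by upper semicontinuity, once (1) is known.

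The contribution of $R$ is where the real work lies, and I expect $H^{1}(N_{R/\mathbb G_{E}}(-\Sigma))=0$ to be the main obstacle: one must pin down the splitting type of the normal bundle of a rational normal curve inside the Grassmannian, not merely inside a linear space, and rule out summands of degree $\le -2$ after twisting by $-\Sigma$. This is exactly where Lemma 6.7 is decisive. From
$$
0\longrightarrow N_{R/\mathbb G_{x,y}}\longrightarrow N_{R/\mathbb G_{E}}\longrightarrow N_{\mathbb G_{x,y}/\mathbb G_{E}}|_{R}\longrightarrow 0 ,
$$
the identification $N_{\mathbb G_{x,y}/\mathbb G_{E}}\cong\mathcal U_{r}^{*}\otimes\bigl(H^{0}(E)^{*}/(E_{x}^{*}\oplus E_{y}^{*})\bigr)$ with second factor trivial of rank $r(m-2)$, and $\mathcal U_{r}^{*}|_{R}\cong\mathcal O_{\mathbb P^{1}}(1)^{\oplus r}$ from Lemma 6.7, one gets $N_{\mathbb G_{x,y}/\mathbb G_{E}}|_{R}\cong\mathcal O_{\mathbb P^{1}}(1)^{\oplus r^{2}(m-2)}$; moreover $T_{\mathbb G_{x,y}}|_{R}\cong\mathcal U_{r}^{*}|_{R}\otimes Q|_{R}$ is a sum of line bundles $\mathcal O_{\mathbb P^{1}}(a)$ with $a\ge 1$ (as $Q|_{R}$ is a quotient of a trivial bundle), hence so is its quotient $N_{R/\mathbb G_{x,y}}=T_{\mathbb G_{x,y}}|_{R}/T_{R}$, and an extension on $\mathbb P^{1}$ of two such bundles is again a sum of $\mathcal O_{\mathbb P^{1}}(b)$ with $b\ge 1$. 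Therefore $N_{R/\mathbb G_{E}}(-\Sigma)$ is a sum of $\mathcal O_{\mathbb P^{1}}(b-2)$ with $b-2\ge -1$, so its $H^{1}$ vanishes; the same computation gives $H^{1}(N_{R/\mathbb G_{E}})=0$. Feeding all of this into the Mayer--Vietoris sequence yields $H^{1}(N_{\Gamma/\mathbb G_{E}})=0$ together with surjectivity at $\Sigma$, and the smoothing criterion then gives (1).
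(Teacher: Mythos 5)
Your part (2) is correct and is essentially the paper's argument: the Mayer--Vietoris sequence for $\mathcal O_{\Gamma}(1)$, degree reasons on each component, surjectivity of restriction to the two nodes, and Riemann--Roch on $\Gamma$.

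Part (1), however, contains a genuine gap. You propose to control the $C$-contribution by strengthening the inductive hypothesis to include global generation of $N_{C/\mathbb G_E}$, asserting that this gives $h^1(N_{C/\mathbb G_E}(-P-Q))=0$ for general $P,Q$. That implication is false: global generation of a bundle $F$ on a curve does \emph{not} yield $h^1(F(-P-Q))=0$. For example $F=\mathcal O_C$ on a curve of genus $g\ge 1$ is globally generated, yet $h^1(\mathcal O_C(-P-Q))=h^0(K_C(P+Q))\ge g>0$; in general $h^1(F(-P-Q))=0$ is equivalent to $h^1(F)=0$ \emph{together with} surjectivity of $H^0(F)\to F_P\oplus F_Q$, which is strictly stronger than global generation. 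So the key vanishing on the $C$-side is not established, and in addition the propagation step (that the strengthened hypothesis passes to the smooth fibres) would require global generation of $N_{\Gamma/\mathbb G_E}$ itself, which you never assemble from the two components. Your analysis of the $R$-side, via the splitting type of $N_{R/\mathbb G_E}$ using Lemma 6.7 and the normal bundle of $\mathbb G_{x,y}$ in $\mathbb G_E$, is correct, and in fact carries more information than is needed.

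The paper avoids the normal bundle of $C$ entirely: since the sheaf $\mathcal N'$ appearing in Sernesi's smoothing criterion is the image of $\phi:\mathcal T_{\mathbb G\vert\Gamma}\to\mathcal N_{\Gamma\vert\mathbb G}$, hence a quotient of $\mathcal T_{\mathbb G\vert\Gamma}$ on a one-dimensional scheme, it suffices to prove $h^1(\mathcal T_{\mathbb G\vert\Gamma})=0$. This is done by Mayer--Vietoris plus the Euler sequence of the Grassmannian restricted to $C$ and to $R$, reducing everything to $h^1(E)=0$ and $h^1(\mathcal O_{\mathbb P^1}(-1)^{\oplus r^2m})=0$; no strengthening of the induction is needed. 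If you wish to keep your route, replace the global generation claim by the vanishing you actually need on $C$, namely $h^1(N_{C/\mathbb G_E})=0$ --- which already follows from $0\to\mathcal T_C\to\mathcal T_{\mathbb G\vert C}\to N_{C/\mathbb G_E}\to 0$ and $h^1(\mathcal T_{\mathbb G\vert C})=0$, hence from $h^1(E)=0$ --- and place the entire twist by $-\Sigma$ on the $R$-side of the Mayer--Vietoris sequence, where your splitting-type computation does give the required vanishing.
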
 \noindent
Let ${\mathcal U}_r$ be the universal bundle on $\mathbb G_E$,  we have also the vector bundle on $\Gamma$: 
\begin{equation}
F := {\mathcal U}_r^* \otimes \mathcal O_{\Gamma}.
\end{equation}
\begin{lemma} \ \begin{enumerate} \item The restriction map $H^0({\mathcal U}_r^*) \to H^0(F)$ is an isomorphism, 
\item $h^1(F) = 0$ and $h^0(F) = rm$.
\end{enumerate}
\end{lemma}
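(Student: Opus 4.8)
The plan is to compute the cohomology of $F=\mathcal{U}_r^*\otimes\mathcal{O}_\Gamma$ through the normalization (Mayer--Vietoris) sequence attached to the two nodes $\{x,y\}=C\cap R$. Since $F$ is locally free of rank $r$ and $\Gamma=C\cup R$ has exactly the two nodes $x,y$, tensoring
$$0\to\mathcal{O}_\Gamma\to\mathcal{O}_C\oplus\mathcal{O}_R\to\mathcal{O}_x\oplus\mathcal{O}_y\to 0$$
with $F$ gives the short exact sequence
$$0\to F\to (F|_C)\oplus(F|_R)\to F_x\oplus F_y\to 0,$$
where $F_x,F_y$ are the fibres, each of dimension $r$. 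Everything then reduces to understanding the two restrictions and the evaluation maps on their global sections.

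First I would identify $F|_C$ and $F|_R$. By the inductive hypothesis $g_E$ is an embedding, so $C\cong X$, and Lemma 4.3 gives $F|_C\cong g_E^*\mathcal{U}_r^*\cong E$; hence $h^0(F|_C)=rm$, while $h^1(F|_C)=h^1(E)=0$ since $\deg E=r(m+g-1)$ forces $\chi(E)=rm=h^0(E)$. By Lemma 6.8(ii) the universal bundle restricts to $\mathcal{O}_{\mathbb{P}^1}(-1)^{\oplus r}$ on $R$, so $F|_R\cong\mathcal{O}_{\mathbb{P}^1}(1)^{\oplus r}$, giving $h^0(F|_R)=2r$ and $h^1(F|_R)=0$.

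Now part (2). The evaluation map $H^0(\mathcal{O}_{\mathbb{P}^1}(1)^{\oplus r})\to (\mathcal{O}_{\mathbb{P}^1}(1)^{\oplus r})_x\oplus(\mathcal{O}_{\mathbb{P}^1}(1)^{\oplus r})_y$ is an isomorphism, because a section of $\mathcal{O}_{\mathbb{P}^1}(1)$ vanishing at two distinct points is zero and both sides have dimension $2r$. Consequently already the summand $H^0(F|_R)$ surjects onto $F_x\oplus F_y$, so the connecting homomorphism in the long exact cohomology sequence of the displayed short exact sequence vanishes; together with $H^1(F|_C)=H^1(F|_R)=0$ this yields $H^1(F)=0$. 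The same exact sequence then gives $h^0(F)=h^0(F|_C)+h^0(F|_R)-\dim(F_x\oplus F_y)=rm+2r-2r=rm$, which is (2).

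Finally part (1) is a dimension count once (2) is known. The restriction map $\mathcal{U}_r^*\to\mathcal{U}_r^*|_C$ of sheaves on $\mathbb{G}_E$ factors through $F=\mathcal{U}_r^*|_\Gamma$, so on global sections the composite $H^0(\mathcal{U}_r^*)\to H^0(F)\to H^0(F|_C)$ is the restriction $H^0(\mathcal{U}_r^*)\to H^0(E)$. For $\mathbb{G}_E=G(r,H^0(E)^*)$ with $rm>r$ one has $H^0(\mathcal{U}_r^*)=H^0(E)$, and by Lemma 4.3 (which identifies $H^0(E)$ with $g_E^*H^0(\mathcal{U}_r^*)$) this restriction map is an isomorphism. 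Hence $H^0(\mathcal{U}_r^*)\to H^0(F)$ is injective between spaces of equal dimension $rm$, so it is an isomorphism. The one point requiring care is the bookkeeping: verifying that $C\cap R$ is exactly the node set $\{x,y\}$ (ensured by the construction preceding Lemma 6.8) so that the Mayer--Vietoris sequence takes the stated shape, and that the identifications $F|_C\cong E$ and $F|_R\cong\mathcal{O}_{\mathbb{P}^1}(1)^{\oplus r}$ are the ones compatible with the gluing; the rest is formal.
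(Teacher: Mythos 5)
Your proof is correct and follows essentially the same route as the paper: both tensor the Mayer--Vietoris sequence $0\to\mathcal{O}_\Gamma\to\mathcal{O}_C\oplus\mathcal{O}_R\to\mathcal{O}_{x,y}\to 0$ with $F$, deduce $h^1(F)=0$ and $h^0(F)=rm$ from the surjectivity of the evaluation map onto $F_x\oplus F_y$ (you observe the sharper fact that the $R$-summand alone already surjects, where the paper invokes surjectivity of both component evaluation maps), and obtain (1) from injectivity of the composite $H^0(\mathcal{U}_r^*)\to H^0(F)\to H^0(E)$ together with equality of dimensions.
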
 \par \noindent
\begin{lemma} Let $x_1, \dots , x_m$ be general points on $C$. Then $h^0(F(-x_1 - \dots - x_m)) = 0$. \end{lemma}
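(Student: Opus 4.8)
The plan is to reduce the asserted vanishing for the bundle $F$ on $\Gamma = C \cup R$ to the corresponding vanishing for $E$ on $C$, which is already available to us from the standing Assumption of this section.

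First I would show that restriction to $C$ induces an injection $H^0(F) \hookrightarrow H^0(E)$. A global section $s$ of $F = \mathcal{U}_r^* \otimes \mathcal{O}_{\Gamma}$ that vanishes on $C$ restricts, on the rational component $R$, to a section of $F|_R$ vanishing at the two nodes $x, y \in R$; by Lemma 6.8 (ii) one has $\mathcal{U}_r|_R \cong \mathcal{O}_{\mathbb{P}^1}(-1)^{\oplus r}$, hence $F|_R \cong \mathcal{O}_{\mathbb{P}^1}(1)^{\oplus r}$, and a section of $\mathcal{O}_{\mathbb{P}^1}(1)$ vanishing at two distinct points of $\mathbb{P}^1$ is zero. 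Therefore $s|_R = 0$ and so $s = 0$. Using that $g_E$ is an embedding (the inductive hypothesis) together with Lemma 4.3, one identifies $C$ with $X$ and $F|_C = \mathcal{U}_r^*|_C$ with $g_E^* \mathcal{U}_r^* \cong E$, which yields the injection $H^0(F) \hookrightarrow H^0(E)$, compatible with restriction to subschemes supported on $C$. (It is in fact the composite of the isomorphisms of Lemma 6.10 and the good-pair identification $H^0(\mathcal{U}_r^*) \cong H^0(E)$.)

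Then, choosing $x_1, \dots, x_m$ general on $C$ and distinct from $x$ and $y$, any section of $F(-x_1 - \dots - x_m)$ is a section of $F$ vanishing at the $x_i \in C$, so its image under the injection above lies in $H^0(E(-x_1 - \dots - x_m))$; hence $h^0(F(-x_1 - \dots - x_m)) \leq h^0(E(-x_1 - \dots - x_m))$. Finally, by Proposition 5.2 (ii) the Pl\"ucker form of $(E, H^0(E))$ exists, which means that the evaluation morphism $ev_{E,H^0(E)} \colon H^0(E) \otimes \mathcal{O}_{X^m} \to \bigoplus_i \rho_i^* E$ between vector bundles of equal rank $rm$ degenerates only along a divisor of $X^m$, i.e.\ is an isomorphism at a general point; since its kernel at $(x_1, \dots, x_m)$ is exactly $H^0(E(-x_1 - \dots - x_m))$, that space vanishes for general $x_1, \dots, x_m$, and the inequality above gives $h^0(F(-x_1 - \dots - x_m)) = 0$.

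The step I expect to be the main obstacle is the injectivity $H^0(F) \hookrightarrow H^0(E)$: this is the only place where the geometry of the construction of $\Gamma$ really enters, through Lemma 6.8 (ii), to exclude nonzero sections of $F$ concentrated on the attached rational normal curve $R$. Everything else is bookkeeping, once one knows — as we do from the standing Assumption — that the Pl\"ucker form of $(E, H^0(E))$ exists.
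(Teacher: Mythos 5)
Your proof is correct and follows essentially the same route as the paper: reduce to $h^0(E(-x_1-\cdots-x_m))=0$ on $C$ (which holds for general points because the Pl\"ucker form of $(E,H^0(E))$ exists), and then kill the remaining piece on $R$ by observing that a section of $F|_R\cong\mathcal O_{\mathbb P^1}(1)^{\oplus r}$ vanishing at the two nodes $x,y$ must vanish. The only cosmetic difference is that you package this as the injectivity of the restriction $H^0(F)\to H^0(E)$ before twisting down, whereas the paper applies the two vanishing steps directly to a section of $F(-x_1-\cdots-x_m)$.
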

\begin{proof} Let us recall that $C = g_E(X)$ and that $E \cong \mathcal U^*_r \otimes \mathcal O_C$. Under the assumptions made at the beginning of this section, $X$ is a general curve of genus $g$, $(E, H^0(E))$ is a good pair admitting a Pl\"ucker form. This implies that $h^0(E(-x_1- \dots -x_m)) = 0$,  where $x_1, \dots, x_m$ are general points on $X$.  Notice also that $F \otimes \mathcal O_C \cong E$ and that, by the previous lemma, the restriction map $H^0(F) \to H^0(E)$ is an isomorphism. . \par \noindent
Let $d := x_1 + \dots + x_m$ and let $s \in H^0(F(-d))$. Then $s$ is  zero on $X$ because $h^0(E(-d)) = 0$. In particular $s$ is zero on $\lbrace x,y \rbrace = C \cap R$. Hence its restriction on $R$ is a global section $s_{\vert R}$ of $\mathcal O_R(-x-y)$. But $F \otimes \mathcal O_R(-x-y)$ is $\mathcal O_{\mathbf P^1}(-1)^{\oplus r}$ so that $s_{{\vert}_ {\ R}} = 0$. Hence $s$ is zero on $\Gamma$ and $h^0(F(-d)) = 0$. \end{proof} \par \noindent

 \par \noindent  We are now able to complete the proof of Theorem 6.3,  postponing the proofs of  lemmas 6.9 and 6.10.  \medskip \par \noindent
 \underline {\it Completion of the proof of Theorem 6.3}: \par \noindent We start from a curve $\Gamma = C \cup R$ as above. Therefore the component $C = g_E(X)$ 
 is the embedding in $\mathbb G_E$ of a curve $X$ with general moduli and, by the previous lemma, there exists $(x_1, \dots, x_m) \in C^m$ such 
 that $h^0(F(-x_1 - \dots - x_m)) = 0$. Now recall that, by lemma 6.9, the curve $\Gamma$ is smoothable in $\mathbb G_E$. This means that there exists a flat family
 $$
 \lbrace X_t, \ t \in T \rbrace
 $$
 of curves $ X_t \subset \mathbb G_E$ such that: (1) $T$ is integral and smooth, (2) for a given $o \in T$ one has $X_o = \Gamma$, (3) $X_t$ is smooth 
 for $t \neq o$. Let $$ E_t := {\mathcal U}_r^* \otimes \mathcal O_{X_t}. $$ 
For $t$ general we have  $h^1(E_t) = h^1(F) = 0$, by semicontinuity, and hence $h^0(E_t)$ $=$ $rm$. For the same reason,  the determinant map $d_t: \wedge^r H^0(E_t) \to H^0(\mathcal O_{X_t}(1))$ has rank bigger or equal to  the rank of $d_o: \wedge^r H^0(F) \to H^0(\mathcal O_{\Gamma}(1))$. This is equivalent to say that $$ {\dim} \  \langle X_t \ \rangle  \ \geq {\dim} \ \langle  \Gamma  \rangle  \ \geq \ r(m-1) + g + 1. $$
Then, for $t$ general, the pair $(X_t, E_t)$ satisfies the statement of Theorem 6.3. \par \noindent To complete the proof of the theorem, it remains to show that {\it $E_t$ is semistable for a general $t$}. It is well known that  $E_t$ is semistable if it  admits  theta divisor, see   \cite{key2}. This is equivalent to say that 
$$
\Theta_t := \lbrace N \in {\pic}^m (X_t) \  \vert  \ h^0(E_t \otimes N^{-1} ) \geq 1 \rbrace \neq {\pic}^m (X_t),
$$
therefore $E_t$ is semistable if 
$$
D_t := \lbrace (z_1, \dots, z_m) \in X_t^m \  \vert  \ h^0(E_t(-z_1- \dots -z_m)) \geq 1 \rbrace \neq X_t^m.
$$ 
To prove that $D_t \neq X_t^m$ for a general $t$, we fix in ${\mathbb G_E}^m \times T$ the family 
$$ A := \lbrace (z_1, \dots, z_m;t) \in {{\mathbb G}_E}^m \times T \  \vert  \ z_1, \dots, z_m \in X_t - {\Sing}(X_t) \  \rbrace,$$
which is integral and smooth over $T$. Then we consider its closed subset
$$ D := \lbrace (z_1, \dots, z_m; t) \in A \  \vert  \ h^0({\mathcal U}_r^* \otimes \mathcal O_{X_t}(-z_1 - \dots - z_m)) \geq 1 \rbrace. $$ 
It suffices to show that $D$ is proper, so that $D_t \neq X_t^m$ for a general $t$. Since $E_o = F$,  lemma 6.11 implies that  $D \cap X_o^m$ is proper. Indeed there exists a point $(x_1, \dots, x_m) \in C^m \subset X_o^m$ so that $h^0(F(-x_1 - \dots - x_m)) = 0$. Hence $D$ is proper. \end{proof}
\medskip \par \noindent
 \begin{proof}[\underline {Proof of Lemma 6.9}] \ \par \noindent
(i) We will put $\mathbb G := \mathbb G_E$. We recall that  $\Gamma$ is  \it smoothable \rm in $\mathbb G$ if there exists an integral variety
$ {\mathcal X} \subset \mathbb G \times T$ such that: 
\begin{enumerate} \it  \item[(a)] the projection $p: {\mathcal X} \to T$ is flat,
 \item[(b)] for some $o \in T$ the fibre  ${\mathcal X}_o$ is  $\Gamma$,
 \item[(c)] if $t \in T-\lbrace o \rbrace$, the fibre ${\mathcal X}_t$ is  smooth of genus $g+1$. 
\end{enumerate}
To prove that $\Gamma$ is smoothable we use a well known argument, see \cite{key11} or \cite{key11'}.  Consider the natural map $\phi: \mathcal T_{_{\mathbb G \vert \Gamma}} \to \mathcal N_{_{\Gamma  \vert \mathbb G}}$, where $\mathcal N_{_{\Gamma \vert \mathbb G}}$ is the normal bundle of $\Gamma$ in $\mathbb G$.  \rm The Cokernel of $ \phi$  is a sheaf $T^1_S$, supported on $S := {\Sing}(\Gamma)$. It is known as the $T^1$-sheaf of Lichtenbaum-Schlessinger. Finally, $\phi$ fits  into the following exact sequence induced by the inclusion  $\Gamma \subset \mathbb G$: 
$$ 
0 \to  \mathcal T_{_{\Gamma}} \to \mathcal T_{_{\mathbb G \mid \Gamma} } \stackrel{\phi}{\rightarrow}  \mathcal N_{_{\Gamma \vert  \mathbb G}} \to  T^1_{_{S}} \to 0.
$$
 Let $\mathcal N'$  be the image of $\phi$ in $\mathcal N_{_{\Gamma \vert \mathbb G}}$. The condition
$h^1(\mathcal N') = 0$ implies that $\Gamma$ is smoothable in $\mathbb G$, \cite{key11} prop. 1.6. 
 To show that $h^1(\mathcal N') = 0$ it is enough to show that $h^1(\mathcal T_{_{\mathbb G \vert \Gamma}})
= 0$, this is a standard argument following from the exact sequence
$$
0 \to  \mathcal T_{_{\Gamma}} \to \mathcal T_{_{\mathbb G \mid \Gamma} } \to \mathcal N' \to 0.$$
To prove that $h^1(\mathcal T_{_{\mathbb G \vert \Gamma}}) = 0$ we use the  Mayer-Vietoris exact sequence
$$
0 \to \mathcal T_{_{\mathbb G \vert \Gamma}} \to \mathcal T_{_{\mathbb G \vert C}} \oplus \mathcal T_{_{\mathbb G \vert R}} \to \mathcal T_{_{\mathbb G \vert S}} \to 0.
$$
The associated long exact yields the restriction map
$$
\rho: H^0(\mathcal T_{_{\mathbb G \vert C}}) \oplus H^0(\mathcal T_{_{\mathbb G \vert R}})\to H^0(\mathcal T_{_{\mathbb G \vert S}}).
$$
At first we show its surjectivity: it suffices to show that  $$ \rho: 0 \oplus H^0(\mathcal T_{_{\mathbb G \vert R}}) \to H^0(\mathcal T_{_{\mathbb G \vert S}})$$ is surjective. Recall that $S$ consists of two points $x,y$ and that $T^1_S = \mathcal O_S$. Then, tensoring by $\mathcal T_{_{\mathbb G \vert R}}$ the exact sequence
$$
0 \to \mathcal O_R(-x-y) \to \mathcal O_R \to \mathcal O_S \to 0,
$$
the surjectivity of $\rho$ follows if $h^1(\mathcal T_{_{\mathbb G \vert R}}(-x-y)) = 0$.  To prove this consider the standard Euler sequence defining the tangent bundle to $\mathbb G$:
$$ 0  \to {\mathcal U}_r \otimes {\mathcal U}^*_r \to {O_{\mathbb G}}^{\oplus rm} \otimes {\mathcal U}^*_r \to \mathcal T_{_{\mathbb G}} \to 0. $$
Then restrict it to $R$ and tensor by $\mathcal O_R(-x-y)$. The term in the middle of such a sequence is $M := \mathcal O_{\mathbb P^1}^{\oplus rm} \otimes \mathcal O_{\mathbb P^1}(-1)^{\oplus r}$. This just follows because $\mathcal U^*_r \otimes \mathcal O_R \cong \mathcal O_{\mathbb P^1}(1)^{\oplus r}$. Since $h^1(M) = 0$, it follows that $h^1(\mathcal T_{_{\mathbb G \vert R}}(-x-y)) = 0$. Hence $\rho$ is surjective. The surjectivity of $\rho$ and the vanishing of $h^1(\mathcal T_{_{\mathbb G \vert R}})$ and $h^1(\mathcal T_{_{\mathbb G \vert C}})$ clearly imply that $h^1(\mathcal T_{_{\mathbb G \vert \Gamma}}) = 0$. Hence we are left to show that $h^1(\mathcal T_{_{\mathbb G \vert R}}) = h^1(\mathcal T_{_{\mathbb G \vert C}}) = 0$. Since $\mathcal T_{_{\mathbb G \vert R}} \cong \mathcal O_{\mathbb P^1}^{\oplus rm} \otimes \mathcal O_{\mathbb P^1}(1)^{\oplus r}$, the former vanishing is immediate. To prove that $h^1(\mathcal T_{_{\mathbb G \vert C}}) = 0$ the argument is similar. Restricting the above Euler sequence to $C$ we obtain the exact sequence
$$0  \to E^* \otimes E \to E^{\oplus rm} \to  \mathcal T_{_{\mathbb G \vert C}} \to 0,$$
since ${{\mathcal U}^*_r}_{\vert C} \simeq  E.$ Then  $ h^1(E) =0$ implies  $h^1(\mathcal T_{_{\mathbb G \vert C}}) = 0$.  \par \noindent
(ii) To prove $h^1(\mathcal O_{\Gamma}(1)) = 0$ it suffices to consider the long exact sequence associated to the Mayer-Vietoris exact sequence
$$
0 \to \mathcal O_{\Gamma}(1) \to \mathcal O_C(1) \oplus \mathcal O_R(1) \to \mathcal O_{x, y}(1) \to 0.
$$
For degree reasons we have $h^1(\mathcal O_C(1)) = h^1(\mathcal O_R(1)) = 0$. Hence it suffices to show that the restriction  $H^0(\mathcal O_C(1)) \oplus H^0(\mathcal O_R(1)) \to \mathcal O_{x,y}$ is surjective.  This follows from the surjectivity of the restriction $H^0(\mathcal O_R(1)) \to \mathcal O_{x,y}$. \end{proof}
\begin{proof}[\underline{Proof of Lemma 6.10}] \ \par \noindent Tensoring by $F$ the standard Mayer-Vietoris exact sequence
$$
0 \to \mathcal O_{\Gamma} \to \mathcal O_C \oplus \mathcal O_R \to \mathcal O_{x, y} \to 0
$$
we have the exact sequence
$$
0 \to F \to E \oplus \mathcal O_{\mathbb P^1}(1)^{\oplus r} \to F \otimes \mathcal O_{x,y} \to 0.
$$
Passing to the associated long exact sequence we obtain
$$
0 \to H^0(F)\stackrel {u} \to H^0(E) \oplus H^0(\mathcal O_{\mathbb P^1}(1)^{\oplus r}) \stackrel {\rho} \to H^0(F \otimes \mathcal O_{x,y})\to H^1(F) \to 0
$$
Restricting $\rho$ to $H^0(E) \oplus 0$ or $0 \oplus H^0( \mathcal O_{\mathbb P^1}(1)^{\oplus r})$ we have the following maps
$$
\rho_C: H^0(E) \to E_{x} \oplus E_{y},
$$
and
$$ 
 \rho_R: H^0(\mathcal O_{\mathbb P^1}(1)^{\oplus r}) \to \mathcal O_{\mathbb P^1,x}(1)^{\oplus r} \oplus \mathcal O_{\mathbb P^1, y}(1)^{\oplus r}.
$$
These are  the usual evaluation maps and we know they are surjective. It follows from the surjectivity of $\rho$ and the above long exact sequece  that $h^0(F) = rm = h^0(\mathcal U^*_r)$ and $h^1(F) = 0$. Thus, to complete the proof, it suffices to show that  $H^0(\mathcal U_r^*) \to H^0(F)$ is injective. This is clear because  the  composition of maps $H^0(\mathcal U_r^*) \to H^0(F) \to H^0(E)$  is injective. \end{proof}

\end{document}